\newtheorem{theorem}{Theorem}[section]
\newtheorem{lemma}[theorem]{Lemma}
\newtheorem{defi}[theorem]{Definition}
\newtheorem{prop}[theorem]{Proposition}
\newtheorem{coro}[theorem]{Corollary}
\theoremstyle{definition}
\newtheorem{Example}[theorem]{Example}
\newtheorem{remark}[theorem]{Remark}
\newcommand{\G}{\Gamma}
\newcommand{\Gab}{G^{\textrm{Ab}}}
\newcommand{\Ab}{\textrm{Ab}}
\newcommand{\Commd}{\text{Comm}_d} 
\newcommand{\Comm}{\text{Comm}_b}
\newcommand{\CComm}{\text{Comm}}
\newcommand{\nbhd}[1]{B(#1)} 
\DeclareMathOperator{\St}{St}
\newcommand{\bbe}{\mathbf e}
\newcommand{\bx}{\mathbf x}
\newcommand{\by}{\mathbf y}
\newcommand{\bv}{\mathbf v}
\title{Graph Powers of Groups}
\author{Gabe Cunningham\footnote{School of Computing and Data Science, 
Wentworth Institute of Technology, 
550 Huntington Avenue, 
Boson, MA 02215 (USA)\\
email: cunninghamg1@wit.edu}\; and Igor Minevich\footnote{same address, email: minevichi@wit.edu}}
\affil{Wentworth Institute of Technology}
\begin{document}


\maketitle

\begin{abstract}
The Lights Out Puzzle, played on a graph $\Gamma$, has been studied using linear algebra over $\mathbb{F}_2$ and more generally over $\mathbb{Z}/k\mathbb{Z}$. We generalize the setting by allowing the states of vertices to be the elements of a group $G$, where a \textit{click} in vertex $v$ multiplies the state of $v$ and its neighbors by an element $g \in G$ on the right. 

Starting with the identity element $e \in G$ for all vertices, the totality of all achievable state configurations forms a group $G^{\Gamma}$. This group generalizes parallel products of group actions and provides a rich structure for analysis. For many graphs, which we term ``RA'' (reducible to abelian), the problem reduces --- regardless of $G$ --- to a linear algebra question over $\mathbb{Z}$. We discuss a chain of five different subgroups consisting of commutators and introduce techniques for showing that families of graphs are RA using each. In particular, using Heisenberg groups, we establish that a graph is RA precisely when a certain lattice spans $\mathbb{Z}^{|\Gamma|}$. While most graphs appear to be RA, we show the odd-dimensional cube graphs $Q_{2n+1}$ and folded cube graphs $\square_d$, for $d$ odd or 2, are not.
\end{abstract}

\section{Introduction}

The classic Lights Out Puzzle is played on a grid of squares that can be either on or off, and clicking a square makes it and the adjacent squares switch states. The goal is to start with all squares being on and get to a state where all are off. Equivalently, we could start with all the squares being in state 0 and thinking about clicking like adding 1 to the state of a vertex and its neighbors $\pmod 2$, with the goal being to get to all 1's.


We explore a natural generalization of the Lights Out Puzzle, or Confused Electrician Games \cite{electrician}, by changing the geography of the puzzle as well as the number and quality of states that a given puzzle piece can experience. Instead of a 5x5 board, we have a finite graph $\G$ whose vertices are the placeholders substituting what used to be squares. And now, the vertices need not simply have two states (modeled by the group of two elements, $\Z/2\Z$). Instead, they are colored according to the set of all the elements of \textit{any} given group $G$, and their states change accordingly. A ``click'' in a vertex must specify an element $g$ of the group $G$ that is performed on the vertex and its neighbors, and then the states of the vertex and its neighbors are multiplied on the right by $g$.

As it turns out, the set of all possible states that could be achieved simultaneously in all the vertices, starting from the identity element $1 \in G$ in all of them, is a group. 
If we start with any other state, the states we can reach from there are in direct 1-1 correspondence with this group, which we call $G^\G$. 
Many authors (far too many to list) have analyzed this kind of situation in the case of abelian groups $G$ (see \cite{electrician, LOFiniteGraphs,  Eriksson2001note,  Fleischer2013survey, Giffen2009generalizing, Kreh2017lights, MUETZE20072755, Sutner2000sigma, Torrence}), so we are most interested in the non-abelian case. 

The short exact sequence
\begin{equation}
    \label{main-short-exact-seq}
    1 \to [G, G]^{|\G|} \cap G^\G \to G^\G \to (G^\Ab)^\G \to 1
\end{equation}
turns the exploration of $G^\G$ into (1) an ``abelian question'' (what is $(G^\Ab)^\G$), for which we provide some answers in \cref{sec:abelian} but for the most part defer to other papers, and (2) the amazing exploration of $[G, G]^{|\G|} \cap G^\G$. If the latter is all of $[G,G]^{|\G|}$, then we can change the state of any vertex by any commutator independent of the rest, which reduces the question completely to the abelian case, making the description of $G^\G$ straightforward. Whenever $[G,G]^{|\G|} \sub G^\G$ for all $G$ (and a fixed $\G$), we say that $\G$ is \emph{RA (reducible to abelian)}.

Indeed, we show many graphs are RA (including grids, trees, cycles, complete bipartite graphs, and more) -- see Section \ref{RA-families}. We also describe a method for directly computing whether a graph is RA; see Theorem~\ref{RA-iff}. The problem of constructing families of non-RA graphs seems difficult, though we exhibit two families of non-RA graphs; see Corollary~\ref{graphs-not-RA}. 

For some families of graphs, it is easy to show that they are RA using a subgroup of $[G,G]^{|\G|} \cap G^\G$ that is easy to work with, namely the subgroup generated by commutators of clicks. There are RA graphs that cannot be proven to be RA using only the commutator tricks of Section~\ref{RA-families}, and we will explore these in \cite{FullPaper}. 
Here we begin to explore the chain of inclusions of subgroups of $[G,G]^{|\G|} \cap G^\G$ (see \eqref{all-layers}). The structure of Heisenberg groups of order $p^3$ is so nice that these subgroups turn out to coincide for every $p$, and we are able to use a local-to-global principle to make the characterization of RA graphs in Theorem~\ref{RA-iff}. 

Our work sheds light on subgroups of direct products of groups and provides heretofore unexplored generalizations of topics related to the Lights Out Puzzle. It also leads naturally to a generalization of the \emph{parallel product} (see \cite{parallel-product}) of group actions. In the examples in Section~\ref{subsect:DefEg} and Example~\ref{eg:Rubik_full}, we briefly explore one situation where this sort of action may arise, namely when we have  Rubik's Cubes strung together in a way that when you move one of them, the adjacent ones also move in the same way. 


\section{Preliminaries}

    \subsection{Graph theory}

    All graphs in this paper will be finite undirected simple graphs (no loops or multiple edges). If $v$ is a vertex of a graph $\Gamma$, then $\nbhd{v}$ will be the closed neighborhood of $v$; that is, the set that contains $v$ and all of its neighbors in $\Gamma$. 
    The number of vertices of $\Gamma$ will be $|\Gamma|$.

    Unless explicitly stated otherwise, we will assume the vertices of the graph are labeled $v_1, v_2, \dots, v_n$ (with $n = |\Gamma|$) and the notation $\bbe_i$ refers to the $|\Gamma|$-dimensional  vector $\langle 0, \dots, 0, 1, 0, \dots, 0\rangle$ with the $1$ in the $i$-th place, corresponding to $v_i$. More generally, if $v$ is a vertex of $\G$, then $\bbe_v$ is the vector with a 1 in the coordinate corresponding to $v$ and a 0 everywhere else.

    By the \emph{indicator vector} for a subset $U \subseteq V(\G)$, we mean $\sum_{v \in U} \bbe_v$, i.e. the vector with a 1 in each coordinate that corresponds to a member of $U$.

    We will refer to the following standard graphs:
    \begin{enumerate}
        \item The path graph $P_n$ with $n$ vertices,
        \item The cycle graph $C_n$ with $n$ vertices,
        \item The complete graph $K_n$ with $n$ vertices,
        \item The complete bipartite graph $K_{m,n}$ with $m+n$ vertices, 
        \item The star graph $\St_n$ with $n+1$ vertices, and
        \item The $d$-dimensional cube graph $Q_d$ with $2^d$ vertices, represented by bit strings of length $d$, with two vertices adjacent if and only if they differ in a single bit.
    \end{enumerate}


    The adjacency matrix of $\G$ will be denoted $\Adj(\G)$. 

    \subsection{Group theory} \label{sec:gp-theory}

    We will denote the identity of a group by $1$ and abbreviate $\Z/n\Z$ to $\Z_n$. The commutator of $x, y \in G$ is $[x, y] = xyx^{-1}y^{-1}$. 
    We will denote the abelianization of $G$ (that is, $G / [G, G]$) by $G^\Ab$.


    We will denote by $D_{2n}$ the dihedral group (of size $2n$) on $n$ vertices and use two presentations for it: 
    \begin{align*}
    D_{2n} &= \langle x, y \mid x^2 = y^2 = (xy)^n = 1\rangle \\
    &= \langle r, s \mid s^2 = (rs)^2 = r^n = 1\rangle.
    \end{align*}
    
    Recall (see e.g. \cite{DummitandFoote}) for any prime $p$, the Heisenberg group is the group of $3 \times 3$ upper triangular matrices in $\GL_3(\F_p)$ with 1's on the main diagonal, which we will refer to as $H(\F_p) = H_1(\F_p)$; more generally, Heisenberg groups $H_n(\F_p)$ of order $p^{2n+1}$ exist, whose elements are $(n+2) \times (n+2)$ matrices of the form
    \[\matnine1{\bf a}c0{I_n}{\bf b}001.\]
    By explicit computation, it is easy to deduce several key properties:
    \begin{enumerate}
        \item All such groups have exponent $p$ if $p > 2$, and if $p = 2$, the exponent is 4.
        \item For the special case where $p = 2$ and $n = 1$, we have $H(\F_2) \cong D_8$.
        \item The commutator subgroup of any Heisenberg group is the center, namely the subgroup (isomorphic to) $\Z/p\Z$ where $\bf a = \bf b = \bf 0$.
        \item The abelianization of $H_n(\F_p)$ is $(\Z/p\Z)^{2n}$, and we can find the $2n$ generators for the direct summands by taking $\bf a = \bf e_i$ and $\bf b = \bf 0$ and vice-versa, with $c = 1$; such generators are involutions when $p = 2$.
    \end{enumerate} 

    \subsection{Linear algebra}
    \label{sec:linear-algebra}

    The Smith Normal Form \cite{DummitandFoote} of an $m \times n$ matrix $M$ with integer coefficients is the diagonal matrix $N$ with integer entries such that there exist matrices $U \in GL_m(\Z), V \in GL_n(\Z)$ with $UMV = N$ and the entries along the diagonal are the \textbf{elementary divisors} (or invariant factors) $a_1, a_2, \dots, a_{\min(m, n)}$ of $M$, with $a_1 \mid a_2 \mid a_3 \mid \cdots$. It is well-known that the $i$-th elementary divisor equals $d_i(M)/d_{i-1}(M)$ \cite{SNF}, where $d_i(M)$ is the greatest common divisor of the $i \times i$ minors of $M$ and $d_0(M) = 1$.
        
    We briefly mention one useful fact about elementary divisors.

    \begin{prop}
    \label{row-sums-fund-factors}
    Suppose $M$ is $m \times n$ with $m \ge n$. If there is a prime $p$ such that every row sum of $M$ is divisible by $p$, then $M$ has an elementary divisor that is divisible by $p$ (possibly equal to 0).
    \end{prop}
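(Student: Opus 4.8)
The plan is to reduce the claim to a statement about the $n \times n$ (maximal) minors of $M$ and then exploit the row-sum hypothesis through a linear dependence mod $p$. Since $m \ge n$, we have $\min(m,n) = n$, so $M$ has exactly $n$ elementary divisors $a_1 \mid a_2 \mid \cdots \mid a_n$, and by the formula recalled just above, their product telescopes to $a_1 a_2 \cdots a_n = d_n(M)/d_0(M) = d_n(M)$, the gcd of all $n \times n$ minors. Because $p$ is prime, it suffices to show that $p \mid d_n(M)$ (with the convention that $p \mid 0$): then either $d_n(M) = 0$, forcing some $a_i = 0$, or $p$ divides the nonzero product $a_1 \cdots a_n$ and hence divides some individual $a_i$.

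Next I would show that every $n \times n$ minor of $M$ is divisible by $p$. A maximal minor is the determinant of a submatrix $M'$ obtained by selecting $n$ of the $m$ rows while keeping all $n$ columns; crucially, because we keep every column, each row of $M'$ is a full row of $M$ and therefore has row sum divisible by $p$. Writing $\mathbf{1}$ for the all-ones column vector in $\Z^n$, this says $M' \mathbf{1} \equiv \mathbf{0} \pmod p$, i.e. the columns of $M'$ satisfy a nontrivial linear relation over $\F_p$ (the coefficients are all $1$, which is nonzero in $\F_p$). Hence the columns of $M'$ are linearly dependent mod $p$, so $\det(M') \equiv 0 \pmod p$.

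Since every $n \times n$ minor is divisible by $p$, their gcd $d_n(M)$ is divisible by $p$ as well; this remains valid in the degenerate case where all these minors vanish, in which case $d_n(M) = 0$. Combining this with the product formula completes the argument. I do not anticipate a serious obstacle here; the single point that needs care is the hypothesis $m \ge n$, which is exactly what guarantees that the maximal minors come from $n \times n$ submatrices containing whole rows of $M$ — this is what lets the row-sum condition translate into a column dependence. If instead $m < n$, the maximal minors would omit columns, the row sums of the chosen submatrix would no longer all be divisible by $p$, and the argument would break down, so the hypothesis cannot be dropped.
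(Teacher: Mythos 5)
Your proof is correct and follows essentially the same route as the paper's: both arguments hinge on the observation that a maximal square submatrix $M'$ keeps every column of $M$, so $M'\mathbf{1} \equiv \mathbf{0} \pmod p$ makes $M'$ singular mod $p$, giving $p \mid \det M'$ for all maximal minors and hence $p \mid d_n(M)$. The only (cosmetic) difference is the final step --- you use the telescoping product $a_1 \cdots a_n = d_n(M)$ together with primality of $p$ (handling $d_n(M)=0$ separately), whereas the paper locates the first index in the chain $d_0 = 1 \mid d_1 \mid \cdots$ where $p$ enters --- and your write-up is in fact slightly cleaner, since the paper's proof says ``$m \times m$ minors'' and $d_m(M)$ where $n \times n$ and $d_n(M)$ are meant.
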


    \begin{proof}
    The submatrix corresponding to any $m \times m$ minor multiplied by $\vec 1 = (1, 1, \dots, 1)^T$ results in $\vec 0 \pmod p$, which means it is a singular matrix $\pmod p$, hence its determinant is divisible by $p$. Thus, $p \mid d_m(M)$, and since $d_0(M) = 1$ and each $d_i(M)$ divides $d_{i+1}(M)$, there is an index $i$ somewhere along the chain such that $p \nmid d_{i}(M)$ but $p \mid d_{i+1}(M)$, and for that $i$ we have the $i$-th elementary divisor $a_i$ divisible by $p$.
    \end{proof}
                            
\section{Graph powers of groups}

\subsection{Definition and examples}
    \label{subsect:DefEg}
    The guiding metaphor for what we will call the \emph{graph power of a group} is this: start with a graph $\Gamma$ and a group $G$. Put the identity element of $G$ at every vertex of $\Gamma$ -- this corresponds with the identity element of $G^n$. If we `click' vertex $v$ with group element $g$, then we want to multiply the states of all vertices in the closed neighborhood $\nbhd{v}$ by $g$ on the right. The set of achievable states is what we call $G^\Gamma$. 

    We will start with a somewhat more general way to construct subgroups of $G^n$. First, we make the following definition.

    \begin{defi}
    Let $G$ be a group, and let $\bx = (x_1, \ldots, x_n) \in \Z^n$. 
    We define 
    \[ g^{\bx} = (g^{x_1}, \ldots, g^{x_n}) \in G^n. \]
    \end{defi}

    The following properties are clear:

    \begin{prop} \label{basic-props}
    \begin{enumerate}[(a)]
        \item $g^{\bx} g^{\by} = g^{\bx+\by}$
        \item If $k \in \Z$, then $(g^k)^{\bx} = g^{k\bx}$
        \item If every component of $\bx$ is 0 or 1, then $g^\bx h^\bx = (gh)^\bx$
    \end{enumerate}
    \end{prop}

    \begin{defi} \label{def:g-to-the-s} 
    Suppose $S \subseteq \Z^n$. We define 
    \[ G^S = \langle g^{\bx} \mid g \in G, \bx \in S \rangle. \]
    If $M$ is an integer matrix, then we define $G^M$ to be $G^S$ where $S$ is the set of rows of $M$. \end{defi}

    Note that by Proposition~\ref{basic-props}, $G^S = G^{\langle S \rangle}$, so we will usually think of $S$ as a set of generating vectors, rather than an entire integer lattice.

    Now suppose that $\Gamma$ is a graph with vertex set $\{v_1, \ldots, v_n\}$.
    We define the \emph{activation matrix of $\G$} to be
    \[ A_\Gamma = \Adj(\Gamma) + I_n,\]
    where $I_n$ is the $n \times n$ identity matrix. 
    If $v$ is a vertex of $\Gamma$ and $\bx$ is the corresponding row of $A_\Gamma$, then `clicking $v$ with $g$' is accomplished by multiplying the state by $g^{\bx}$. In other words, our metaphor is realized with the following definition:
    
    \begin{defi}
    If $\Gamma$ is a finite graph with $n$ vertices, then the \emph{graph power of $G$ with respect to $\G$} (or simply the \emph{$\G$ power of $G$}),
    denoted $G^\Gamma$, is the subgroup of $G^n$ defined by $G^\Gamma = G^{A_{\Gamma}}$, where $A_{\G} = \Adj(\G) + I_n$.
    \end{defi}

    For convenience, if $v$ is a vertex of $\G$, then we will write $g^v$ to mean $g^\bx$, where $\bx$ is the row of $A_\G$ that corresponds to $v$. Similarly, if $k$ is an integer, we understand $g^{kv}$ to mean $(g^k)^v$. Recall that $\bbe_v$ is the vector with a 1 in the coordinate corresponding to $v$ and 0 everywhere else, which implies that $g^{\bbe_v}$ is the element of $G^n$ that has a $g$ in the coordinate corresponding to $v$ and a $1$ (i.e. the identity of $G$) everywhere else.
    
    We will continue to use the Lights Out metaphor, so that multiplying (on the right) by $g^v$ will also be referred to as \emph{clicking $v$ with $g$}. The utility of this metaphor will become apparent.

    For GAP code that can be used to compute $G^\G$, see Figure~\ref{fig:G-Gamma-code} in \ref{sec:app-code}
    
    Let us explore several illustrative examples.
    
    \begin{Example}\label{eg:K_n}
    Suppose that $\Gamma$ is a complete graph $K_n$. Then since every vertex is adjacent to every other vertex, clicking any vertex with $g$ has the effect of multiplying every coordinate by $g$. Thus $G^\Gamma$ is the \emph{diagonal subgroup} of $G^n$, where every coordinate is equal. This shows that the group $G^\G$ is a generalization of the diagonal subgroup of the product of groups.
    \end{Example}

    \begin{Example}
    Imagine a graph $\G$ with a Rubik's Cube attached to each vertex, and any time we perform a move on one cube, that same move is performed on its neighbors. Then we can ask questions like: is it possible to simultaneously solve every Rubik's cube this way, no matter what the original configuration is? This is equivalent to the question of whether $G^\G = G^{|\G|}$.
    \end{Example}

    \begin{Example}
    More generally, suppose that we attach a set $S_i$ to each vertex $v_i$ of $\G$ and that $G$ acts on each $S_i$. We model the state of this ``game'' with an element of $S = S_1 \times \cdots \times S_{|\G|}$, and clicking a vertex with $g$ acts on the state of that vertex and its neighbors by $g$. This gives us an action of $G^\G$ on $S$ which generalizes the \emph{parallel product} of group actions (see \cite{parallel-product}); indeed, if $\G = K_n$, then the action of $G^\G$ is the parallel product of the actions on the $n$ sets.
    \end{Example}
    
    \begin{Example} 
    Suppose that $\Gamma$ is a path graph of length 3, with vertices numbered in the natural way. Then clicking vertex 2 with $g$ and clicking vertex 1 with $g^{-1}$ yields $(1,1,g)$, so $(1,1,g) \in G^\Gamma$ for every $g$. Similarly, $(g,1,1) \in G^\Gamma$. Finally, since clicking vertex 2 with $g$ yields  $(g,g,g)$, it follows that $(1,g,1) \in G^\Gamma$, and thus $G^\Gamma = G^3$. This means it is indeed always possible to solve three Rubik's cubes that are connected in a row as described in the introduction, provided each one is in a solvable state itself (let $G$ = the Rubik's Cube group; we have shown that we can act on each Rubik's Cube individually by using combinations of moves).
    \end{Example}
    
    \begin{Example}
    Now suppose that $\Gamma$ is a cycle with $4$ vertices, and let $G = (\Z, +)$, the additive group of the integers. Then $G^\Gamma$ is the subgroup of the additive group of $\Z^4$ that is generated by the four permutations of $(1,1,1,0)$. Clearly, every element of $G^\Gamma$ has a coordinate sum that is divisible by 3, so it is a proper subgroup of $\Z^4$. 

    If instead, we set $G = \Z_2$, then clicking any three vertices with $1$ yields a permutation of $(1, 0, 0, 0)$. All four permutations are obtainable this way, and thus, $G^\Gamma = G^4$.
    \end{Example}
    
    What we see here is that for some graphs, $G^\Gamma$ is the whole direct product $G^n$ regardless of $G$. Similarly, sometimes $G^\Gamma$ is a proper subgroup of $G^n$ regardless of $G$. On the other hand, for some graphs, the choice of $G$ influences the index of $G^\Gamma$ in $G^n$.
    
    Note that if $G = \Z_2$, then $G^\Gamma$ is essentially the state space of playing ``Lights Out'' on $\Gamma$. In the literature \cite{LOFiniteGraphs}, a graph $\Gamma$ is Always Winnable (AW) if, essentially, $\Z_2^\Gamma = \Z_2^n$. 

    More generally, we will see that $\Z$ plays an important role. The following result follows from the definition of $G^S$, but we will find it useful to spell things out. 
    
    \begin{prop} \label{Z-AW-universal}
    If $S \subseteq \Z^n$ and $\langle S \rangle = \Z^n$ (so that $\Z^S = \Z^n$), then $G^S = G^n$ for every group $G$. In particular, if $\Z^\G = \Z^{|\G|}$, then $G^\G = G^{|\G|}$ for every group $G$.
    \end{prop}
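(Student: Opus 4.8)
The plan is to reduce everything to the standard basis vectors by way of the identity $G^S = G^{\langle S \rangle}$, which is precisely the remark following Definition~\ref{def:g-to-the-s}. First I would spell out why that identity holds, since the proposition asks us to. Using part (b) of Proposition~\ref{basic-props} we have $g^{k\bx} = (g^k)^{\bx} \in G^S$ whenever $\bx \in S$ and $k \in \Z$, and then using part (a) repeatedly we obtain $g^{\bv} \in G^S$ for every integer combination $\bv = \sum_i k_i \bx_i$ with each $\bx_i \in S$. Since such combinations are exactly the elements of $\langle S \rangle$, this gives $G^{\langle S \rangle} \subseteq G^S$, while the reverse inclusion is immediate from $S \subseteq \langle S \rangle$.

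Given the hypothesis $\langle S \rangle = \Z^n$, this reduces the claim to showing $G^{\Z^n} = G^n$. The key observation is that each standard basis vector $\bbe_i$ belongs to $\Z^n$, so $g^{\bbe_i} \in G^{\Z^n}$ for every $g \in G$. By the explicit description of $g^{\bbe_v}$ given earlier, $g^{\bbe_i}$ is the element of $G^n$ with $g$ in coordinate $i$ and the identity everywhere else. Since the coordinates of $G^n$ multiply independently, any $(g_1, \ldots, g_n) \in G^n$ factors as the product $g_1^{\bbe_1} g_2^{\bbe_2} \cdots g_n^{\bbe_n}$, so these elements generate all of $G^n$. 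Hence $G^{\Z^n} = G^n$, and therefore $G^S = G^{\langle S \rangle} = G^n$.

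For the final sentence, I would apply the first statement to $S$ equal to the set of rows of the activation matrix $A_\G$, so that $G^S = G^\G$ and $\langle S \rangle = \Z^\G$ by definition. The hypothesis $\Z^\G = \Z^{|\G|}$ is then exactly $\langle S \rangle = \Z^{|\G|}$, and the conclusion $G^\G = G^{|\G|}$ follows at once.

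I expect no serious obstacle here, as the result is essentially bookkeeping. The one point that deserves care is the passage from $S$ to $\langle S \rangle$: it genuinely requires both parts of Proposition~\ref{basic-props} together --- part (b) to handle negative and repeated coefficients and part (a) to combine the pieces --- rather than following from the set-theoretic inclusion $S \subseteq \langle S \rangle$ alone. Everything else is a direct unwinding of the definition of $G^S$.
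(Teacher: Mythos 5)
Your proof is correct and takes essentially the same approach as the paper's: both arguments reduce to showing $g^{\bbe_i} \in G^S$ by expressing $\bbe_i$ as an integer combination of elements of $S$ and invoking Proposition~\ref{basic-props}, then conclude by setting coordinates independently. Your explicit detour through the identity $G^S = G^{\langle S \rangle}$ merely makes visible what the paper does in one line with the combination $a_1\bx_1 + \cdots + a_m\bx_m = \bbe_i$.
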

    \begin{proof}
    If $\langle S \rangle = \Z^n$, then in particular, $\bbe_i \in \langle S \rangle$ for each $i = 1, \ldots, n$. In other words, if $S = \{\bx_1, \ldots, \bx_m\}$, there is a linear combination that yields $\bbe_i$, say
    \[ a_1 \bx_1 + \cdots + a_m \bx_m = \bbe_i, \]
    for some integers $a_j$. Now, for every $g \in G$, it follows that $g^{a_1 \bx_1} \cdots g^{a_m \bx_m} = g^{\bbe_i}$. In other words, $G^S$ contains the element of $G^n$ with $g$ in coordinate $i$ and the identity everywhere else. Then clearly, we can set each coordinate of an element of $G^S$ independently from the rest, and so $G^S = G^n$.
    \end{proof}
    
    

    The argument of Proposition~\ref{Z-AW-universal} can be generalized as follows; the argument is essentially the same.

    \begin{prop} \label{kZ-pseudo-universal}
    Let $k > 1$ be an integer. Suppose $S \subseteq \Z^n$ and that $(k \Z)^n \leq \langle S \rangle$. Fix a group $G$ and let $H$ be the subgroup of $G$ generated by $k$-th powers. Then $H^{n} \leq G^S$. In particular, if the $k$th powers of elements of $G$ generate $G$, then $G^S = G^{n}$.
    \end{prop}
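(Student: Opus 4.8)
The plan is to imitate the proof of \cref{Z-AW-universal}, the only change being that we can no longer produce $g^{\bbe_i}$ for an arbitrary $g$, but only $g^{k\bbe_i}$; this is exactly what forces the conclusion to involve $H$ rather than all of $G$. First I would record two trivialities: that $G^S$ is a subgroup of $G^n$, and that $G^S = G^{\langle S\rangle}$ (a consequence of \cref{basic-props}), so there is no harm in working with the full lattice $\langle S\rangle$ instead of the generating set $S$.

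The key step uses the hypothesis $(k\Z)^n \le \langle S\rangle$, which gives $k\bbe_i \in \langle S\rangle$ for each $i = 1, \ldots, n$. Writing $S = \{\bx_1, \ldots, \bx_m\}$ and choosing integers $a_j$ with $\sum_j a_j \bx_j = k\bbe_i$, parts (a) and (b) of \cref{basic-props} yield, for every $g \in G$,
\[ g^{k\bbe_i} = \prod_{j=1}^m g^{a_j \bx_j} = \prod_{j=1}^m (g^{a_j})^{\bx_j} \in G^S. \]
Since $k\bbe_i$ has $k$ in coordinate $i$ and $0$ elsewhere, the element $g^{k\bbe_i}$ is precisely the tuple with $g^k$ in coordinate $i$ and $1$ in every other coordinate. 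Hence, for each $i$, $G^S$ contains every ``single-coordinate'' tuple whose $i$-th entry is a $k$-th power and whose other entries are all $1$.

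Next I would upgrade from individual $k$-th powers to all of $H$. Because $(g^k)^{-1} = (g^{-1})^k$ is again a $k$-th power, every $h \in H$ is a finite product $h = g_1^k \cdots g_r^k$ of $k$-th powers. Multiplying the corresponding single-coordinate tuples in coordinate $i$ (their product is again supported on coordinate $i$, as multiplication in $G^n$ is coordinatewise) shows that $G^S$ contains the tuple with $h$ in coordinate $i$ and $1$ elsewhere, for every $h \in H$ and every $i$. Finally, any element of $H^n$ is the product over $i$ of such single-coordinate tuples, so $H^n \le G^S$, as claimed. The ``in particular'' is then immediate: if the $k$-th powers generate $G$ then $H = G$, whence $G^n \le G^S \le G^n$.

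I do not expect a serious obstacle. The one point needing care --- and the reason the statement concludes with $H^n$ rather than $G^n$ --- is that the construction can only deposit $k$-th powers (and, after taking products, elements of $H$) into a single coordinate. Thus the argument of \cref{Z-AW-universal}, which freely sets each coordinate to an arbitrary $g \in G$, degrades here precisely to setting each coordinate to an arbitrary element of $H$.
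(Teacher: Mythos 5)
Your proof is correct and takes exactly the approach the paper intends: the paper gives no separate argument for this proposition, remarking only that ``the argument is essentially the same'' as that of \cref{Z-AW-universal}, and your write-up is precisely that generalization (producing $g^{k\bbe_i}\in G^S$ from $k\bbe_i\in\langle S\rangle$ via \cref{basic-props}, then passing to products to fill each coordinate with an arbitrary element of $H$). Your closing remark correctly identifies why the conclusion weakens from $G^n$ to $H^n$, so nothing further is needed.
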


    We see that the structure of $\Z^\Gamma$ is instrumental in understanding $G^\Gamma$. We will focus on determining $\Z^\Gamma$ in \cref{sec:abelian}.
        
\subsection{Subgroups of $G$ and $G^S$}
    We will now work out some basic structural results of $G^\Gamma$ and, more generally, $G^S$. 
    Recall that the commutator subgroup $[G, G]$ generated by all the commutators $[x, y] \in G$ is a normal subgroup; we will consider arbitrary normal subgroups $K$ here for generality, but the reader may find it convenient to keep in mind that $K$ will be $[G, G]$ in the most important applications in the future. We will need to think about the abelianization $G/[G, G]$ of $G$, and then the decomposition of that abelianization as the direct product of cyclic groups, so we start with two results that will be instrumental in the rest of the paper:

    \begin{prop} \label{prop:simpleprops}
    Let $S \subseteq \Z^n$.
    \begin{enumerate}[(a)]
        \item If $K \lhd G$, then there is a natural epimorphism from $G^S$ to $(G/K)^S$ with kernel $K^n \cap G^S$.
        \item $(G \times H)^S$ is naturally isomorphic to $G^S \times H^S$.
    \end{enumerate}
    \end{prop}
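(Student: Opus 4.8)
The map to construct is the one induced coordinatewise by the quotient map $\pi\colon G \to G/K$. First I would define $\Pi\colon G^n \to (G/K)^n$ by $\Pi(h_1,\ldots,h_n) = (\pi(h_1),\ldots,\pi(h_n))$; this is a homomorphism because $\pi$ is, and its kernel is exactly $K^n$. The key observation is that $\Pi$ sends the generator $g^{\bx}$ of $G^S$ to $\pi(g)^{\bx}$, which is a generator of $(G/K)^S$; hence $\Pi$ restricts to a homomorphism $G^S \to (G/K)^S$ whose image contains all the generators of $(G/K)^S$, so it is surjective. Finally, the kernel of the restriction is $\ker(\Pi) \cap G^S = K^n \cap G^S$. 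The only point requiring a line of care is that $\Pi(G^S) \subseteq (G/K)^S$ and that every generator of the target is hit; both follow from $\Pi(g^{\bx}) = \pi(g)^{\bx}$ together with the definition $G^S = \langle g^{\bx} \mid g \in G,\ \bx \in S\rangle$.

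**The plan for part (b).**
Here the natural isomorphism is the coordinatewise version of the canonical rearrangement $(G \times H)^n \cong G^n \times H^n$. Writing an element of $(G\times H)^n$ as a tuple of pairs $((a_1,b_1),\ldots,(a_n,b_n))$, I would define $\Phi$ sending it to $\bigl((a_1,\ldots,a_n),(b_1,\ldots,b_n)\bigr)$; this is a well-known isomorphism $(G\times H)^n \to G^n \times H^n$. The crux is tracking the generators: for $(g,h) \in G\times H$ and $\bx \in S$, the definition of $g^{\bx}$ gives $(g,h)^{\bx} = \bigl((g,h)^{x_1},\ldots,(g,h)^{x_n}\bigr)$, and since powers in a direct product are computed coordinatewise, this equals the pair $(g^{\bx}, h^{\bx})$ under $\Phi$. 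Thus $\Phi$ carries the generating set of $(G\times H)^S$ bijectively onto the set of pairs $(g^{\bx}, h^{\bx})$.

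**Identifying the image.**
The remaining subtlety is that $\Phi$ maps $(G\times H)^S$ \emph{onto} $G^S \times H^S$, not merely into it. The generators $(g^{\bx}, h^{\bx})$ visibly lie in $G^S \times H^S$, so $\Phi\bigl((G\times H)^S\bigr) \subseteq G^S \times H^S$. For the reverse inclusion I would show that $\Phi\bigl((G\times H)^S\bigr)$ contains both $G^S \times \{1\}$ and $\{1\} \times H^S$: taking $h = 1$ yields the generator $(g^{\bx}, 1)$, so every generator of $G^S$ appears in the first factor with identity in the second, and symmetrically with $g=1$. Since these two subgroups together generate $G^S \times H^S$, and $\Phi$ is an isomorphism of the ambient groups, $\Phi$ restricts to an isomorphism $(G\times H)^S \xrightarrow{\sim} G^S \times H^S$.

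**Expected main obstacle.**
Neither part is deep; the whole content is bookkeeping about how the generators $g^{\bx}$ transform under the two canonical maps, using Proposition~\ref{basic-props} to manipulate the exponent vectors. The one place to be careful is the surjectivity in part (b): it is tempting to assert $\Phi\bigl((G\times H)^S\bigr) = G^S \times H^S$ from the generators alone, but one must explicitly produce the ``axis'' elements $(g^{\bx},1)$ and $(1,h^{\bx})$ to see that the full product is generated, rather than only the ``diagonal-type'' elements $(g^{\bx},h^{\bx})$ with matching exponent vector $\bx$.
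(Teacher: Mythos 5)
Your proof is correct and takes essentially the same approach as the paper: the paper handles part (a) exactly as you do, by restricting the coordinatewise quotient epimorphism $G^n \to (G/K)^n$ to $G^S$ and identifying the kernel as $K^n \cap G^S$, and it dismisses the remaining part as straightforward (its proof text in fact has the labels of the two parts inadvertently swapped). Your argument for part (b), in particular the explicit production of the axis elements $(g^{\bx},1)$ and $(1,h^{\bx})$ to get surjectivity onto $G^S \times H^S$, correctly supplies the details the paper omits.
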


    \begin{proof}
    The first result is straightforward. For the second one, let $K \lhd G$ and let $\varphi: G^n \to (G/K)^n$ be the natural epimorphism. This induces a natural epimorphism from $G^S$ to $(G/K)^S$, and the kernel consists of all elements of $G^S$ that have an element of $K$ in each component.
    \end{proof}

    \begin{prop} \label{AW-projects}
    Let $S \subseteq \Z^n$. 
    If $K \lhd G$ and $G^S = G^{n}$, then $(G/K)^S = (G/K)^{n}$.
    \end{prop}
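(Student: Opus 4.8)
The plan is to push everything through the componentwise quotient map and exploit its surjectivity. Let $\pi\colon G \to G/K$ be the canonical projection (available since $K \lhd G$), and let $\Pi\colon G^n \to (G/K)^n$ be the map that applies $\pi$ in each coordinate. Then $\Pi$ is a surjective group homomorphism, since $\pi$ is. The first step is to record how $\Pi$ interacts with the generators $g^{\bx}$: because $\Pi$ acts coordinatewise and $\pi$ is a homomorphism, we get
\[
\Pi\bigl(g^{\bx}\bigr) = \Pi\bigl(g^{x_1}, \ldots, g^{x_n}\bigr) = \bigl(\pi(g)^{x_1}, \ldots, \pi(g)^{x_n}\bigr) = \pi(g)^{\bx}
\]
for every $g \in G$ and every $\bx \in S$.

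Next I would identify the image of $G^S$ under $\Pi$. Since $\Pi$ is a homomorphism, the image of the subgroup generated by $\{g^{\bx} \mid g \in G,\ \bx \in S\}$ is the subgroup generated by the images $\{\pi(g)^{\bx} \mid g \in G,\ \bx \in S\}$. Here is the one point that actually needs checking, although it is immediate: as $g$ ranges over all of $G$, the surjectivity of $\pi$ guarantees that $\pi(g)$ ranges over \emph{all} of $G/K$, so this generating set is exactly $\{\bar g^{\bx} \mid \bar g \in G/K,\ \bx \in S\}$. By \cref{def:g-to-the-s} this means $\Pi(G^S) = (G/K)^S$.

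Finally, I would combine the hypothesis with surjectivity of $\Pi$. From $G^S = G^n$ we get
\[
(G/K)^S = \Pi(G^S) = \Pi(G^n) = (G/K)^n,
\]
where the last equality is just surjectivity of $\Pi$ onto $(G/K)^n$. This completes the argument. There is no genuine obstacle here; the proposition is a formal consequence of the functoriality of $G \mapsto G^S$ under quotients (indeed it is dual to the kernel description in \cref{prop:simpleprops}(a)), and the only thing one must be careful about is that taking the image of the generating set recovers the full generating set for $(G/K)^S$, which relies precisely on $\pi$ being onto.
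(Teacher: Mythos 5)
Your proof is correct and follows essentially the same route as the paper's: both push a lift through the coordinatewise quotient map, using that generators $g^{\bx}$ of $G^S$ project to generators $\pi(g)^{\bx}$ of $(G/K)^S$. You merely phrase it more formally as $\Pi(G^S) = (G/K)^S$ plus surjectivity of $\Pi$, while the paper argues elementwise by lifting a given $h \in (G/K)^n$ and projecting its factorization into generators.
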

    
    \begin{proof}
    For any element $h$ of $(G/K)^{n}$, pick a preimage $g$ in $G^{n}$. Then we can write $g$ as a product of generators of $G^S$, and the product of the images of these generators in $(G/K)^S$ yields $h$.
    \end{proof}
    
    \begin{prop} \label{AW-splitting}
    Let $S \subseteq \Z^n$ and let $K \lhd G$. Then $G^S = G^{n}$ if and only if $K^{n} \leq G^S$ and $(G/K)^S = (G/K)^{n}$. \end{prop}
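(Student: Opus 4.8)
The plan is to prove both implications of the biconditional, leaning on the two preceding propositions. The forward direction is nearly immediate: assuming $G^S = G^n$, the inclusion $K^n \leq G^S$ is trivial because $K^n \leq G^n$, and the equality $(G/K)^S = (G/K)^n$ is exactly the content of Proposition~\ref{AW-projects}. So the substance of the argument lies in the converse.

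For the converse, I would begin from the natural epimorphism $\pi : G^S \to (G/K)^S$ furnished by Proposition~\ref{prop:simpleprops}(a), whose kernel is $K^n \cap G^S$. The crucial observation is that $\pi$ is simply the restriction to $G^S$ of the coordinatewise quotient map $\varphi : G^n \to (G/K)^n$. Under the hypothesis $K^n \leq G^S$ the kernel collapses to $K^n \cap G^S = K^n$, and under the hypothesis $(G/K)^S = (G/K)^n$ the image of $\pi$ is all of $(G/K)^n$. These two simplifications are what let the fibers of $\pi$ be compared against the fibers of $\varphi$.

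The decisive step is then a short coset-chase showing $G^n \leq G^S$. Given an arbitrary $g \in G^n$, its image $\varphi(g)$ lies in $(G/K)^n = (G/K)^S$, so surjectivity of $\pi$ produces some $h \in G^S$ with $\varphi(h) = \varphi(g)$; hence $g h^{-1} \in \ker \varphi = K^n \leq G^S$, and therefore $g = (g h^{-1}) h \in G^S$. Since $g$ was arbitrary, $G^n \leq G^S$, and combined with the standing inclusion $G^S \leq G^n$ this gives $G^S = G^n$.

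I expect the only point requiring care — and the main thing to pin down — is the identification of the abstract epimorphism of Proposition~\ref{prop:simpleprops}(a) with the honest restriction of $\varphi$, so that "equal image under $\pi$" translates into "same coset of $K^n$ in $G^n$." It is precisely this identification, together with the hypothesis $K^n \leq G^S$, that allows us to absorb the correction term $g h^{-1}$ back into $G^S$; without $K^n \leq G^S$ the kernel would only be $K^n \cap G^S$ and the chase would break down.
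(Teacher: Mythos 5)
Your proposal is correct and follows essentially the same route as the paper: the forward direction via the trivial inclusion plus Proposition~\ref{AW-projects}, and the converse via lifting the image of an arbitrary $(g_1,\ldots,g_n)$ through $(G/K)^S = (G/K)^n$ to an element of $G^S$ and absorbing the discrepancy in $K^n \leq G^S$. The only cosmetic difference is that the paper constructs the lift explicitly as $\tilde{h_1}^{\bx_1}\cdots\tilde{h_m}^{\bx_m}$, whereas you invoke the surjectivity of the induced epimorphism from Proposition~\ref{prop:simpleprops}(a) abstractly --- the same argument packaged slightly differently.
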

    
    \begin{proof}
        If $G^S = G^{n}$, then clearly $K^{n} \leq G^S$. Proposition~\ref{AW-projects} proves the rest of that direction.
    
        Now suppose that $K^{n} \leq G^S$ and that $(G/K)^S = (G/K)^{n}$. Consider $(g_1, \ldots, g_n) \in G^n$. Projecting to $(G/K)^n$ we get $(K g_1, \ldots, K g_n)$ in $(G/K)^n$. Since $(G/K)^S = (G/K)^n$, we can write this element as $h_1^{\bx_1} \cdots h_m^{\bx_m}$ in $(G/K)^S$. That is, each $h_i \in G/K$. For each $h_i$, let $\tilde{h_i} \in G$ such that $\tilde{h_i}$ projects to $h_i$. Then $\tilde{h_1}^{\bx_1} \cdots \tilde{h_m}^{\bx_m}$ is an element of $G^S$ that projects to $h_1^{\bx_1} \cdots h_m^{\bx_m} = (K g_1, \ldots, K g_n)$. Thus, the element of $G^S$ differs from $(g_1, \ldots, g_n)$ only by an element of $K^n$, and since $K^n \leq G^S$, it follows that $(g_1, \ldots, g_n) \in G^S$ as well.
    \end{proof}
   
    \begin{coro} \label{AW-splitting-2}
    Let $S \subseteq \Z^n$ and let $K \lhd G$. Suppose that $K^S = K^{n}$ and $(G/K)^S = (G/K)^{n}$. Then $G^S = G^{n}$.
    \end{coro}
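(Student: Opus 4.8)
The plan is to recognize this as a direct corollary of \cref{AW-splitting}, so the entire task reduces to verifying that the two hypotheses given here imply the two hypotheses of that proposition. \cref{AW-splitting} tells us that $G^S = G^n$ holds precisely when both $K^n \leq G^S$ and $(G/K)^S = (G/K)^n$. The second of these is handed to us directly as an assumption, so the only thing left to establish is the inclusion $K^n \leq G^S$.

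For that inclusion, I would argue from the definition of $G^S$ in \cref{def:g-to-the-s}. Since $K$ is a subgroup of $G$, every generator $g^{\bx}$ of $K^S$ (with $g \in K$ and $\bx \in S$) is in particular a generator of $G^S$ (as $g \in G$). Hence $K^S \leq G^S$. Invoking the hypothesis $K^S = K^n$, we immediately obtain $K^n = K^S \leq G^S$, which is exactly the missing condition.

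With both conditions $K^n \leq G^S$ and $(G/K)^S = (G/K)^n$ in hand, \cref{AW-splitting} yields $G^S = G^n$, completing the proof. There is really no substantive obstacle here; the only point requiring any care is the observation $K^S \leq G^S$, and even that is immediate from the generating-set description of these groups. The corollary is best viewed as a convenient repackaging of \cref{AW-splitting}: rather than having to check the asymmetric pair of conditions (one an inclusion into $G^S$, the other an equality for the quotient), one may verify the more symmetric-looking pair $K^S = K^n$ and $(G/K)^S = (G/K)^n$, which in applications often both follow from the same kind of ``surjectivity'' argument applied separately to the subgroup and the quotient.
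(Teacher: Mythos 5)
Your proof is correct and is exactly the argument the paper intends: the corollary is stated without proof as an immediate consequence of Proposition~\ref{AW-splitting}, with the only step being the observation that $K^n = K^S \leq G^S$ (since every generator of $K^S$ is a generator of $G^S$). Nothing to add.
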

    
    \begin{remark}
    Note that the converse of Corollary~\ref{AW-splitting-2} is not true, even in the restricted case where we work with $G^\Gamma$ in place of $G^S$. For example, consider $\Gamma = C_5$, $G = S_4$, and $K = [G, G] = A_4$. In GAP we verified that $(S_4)^{C_5} = (S_4)^5$, but $(A_4)^{C_5} \neq (A_4)^5$. Thus in this case, we have $K^5 \leq G^\Gamma$, but $K^5 \not \leq K^\Gamma$. In other words, we need to click with elements outside of $K$ in order to get some elements of $K^5$.
    \end{remark}

\subsection{How the structure of $\Gamma$ affects $G^\Gamma$}

    Our main interest is in the group $G^\Gamma$ for graphs $\Gamma$. Let us study how some basic graph properties of $\Gamma$ affect $G^\Gamma$. The first result is simple.
    
    \begin{prop} \label{prop:conn-comp}
    If $\Gamma$ has connected components $\G_1, \ldots, \G_k$, then $G^\Gamma = G^{\G_1} \times \cdots \times G^{\G_k}$.
    \end{prop}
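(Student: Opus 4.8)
The plan is to exploit the fact that, after relabeling the vertices so that all vertices of $\G_1$ come first, then those of $\G_2$, and so on, the activation matrix $A_\G = \Adj(\G) + I_n$ becomes block diagonal, with the $j$-th block equal to $A_{\G_j}$. Indeed, since there are no edges between distinct components, $\Adj(\G)$ has a zero entry in every position linking a vertex of $\G_i$ to a vertex of $\G_j$ for $i \neq j$; adding $I_n$ only touches the diagonal, which lies inside the blocks. Equivalently, for any vertex $v$ lying in $\G_j$, its closed neighborhood $\nbhd{v}$ is entirely contained in $V(\G_j)$, so the row $\bx$ of $A_\G$ corresponding to $v$ is supported on the coordinates of $\G_j$.

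With $n_j = |\G_j|$ and $n = n_1 + \cdots + n_k$, I would use the canonical identification $G^n \cong G^{n_1} \times \cdots \times G^{n_k}$ that groups the coordinates by component. Under this identification I claim that for a vertex $v$ of $\G_j$ and any $g \in G$, the generator $g^v$ of $G^\G$ corresponds to the element whose $j$-th factor is $g^{\bx'}$, where $\bx'$ is the row of $A_{\G_j}$ indexed by $v$, and whose every other factor is the identity. This is immediate from the block structure just described, because $g^v$ has the identity in every coordinate outside $\nbhd{v} \sub V(\G_j)$. Consequently, the set of generators $\{ g^v : v \in V(\G_j),\ g \in G \}$ generates exactly the subgroup $H_j := 1 \times \cdots \times G^{\G_j} \times \cdots \times 1$ sitting in the $j$-th factor.

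Since $G^\G$ is by definition generated by all of the $g^v$, it is generated by $H_1, \ldots, H_k$ together. To conclude that this subgroup is the full (internal) direct product $H_1 \times \cdots \times H_k \cong G^{\G_1} \times \cdots \times G^{\G_k}$, I would verify the two standard conditions: elements of $H_i$ and $H_j$ with $i \neq j$ commute, because they are supported on disjoint sets of coordinates of $G^n$; and each $H_j$ meets the subgroup generated by the others trivially, again because the $H_j$ have pairwise disjoint supports. These checks are routine, and they are really the only thing to confirm: the heart of the argument is the block-diagonal structure, so I do not expect a genuine obstacle here. The one point that warrants a moment's care is ensuring that every generator $g^v$ truly lands in a single component --- i.e.\ that $\nbhd{v}$ cannot straddle two components --- which is exactly the defining property of connected components and is what makes the decomposition clean.
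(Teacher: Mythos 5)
Your proof is correct: the paper offers no argument at all for this proposition (it is introduced with ``The first result is simple''), and your block-diagonal decomposition of $A_\G$ with generators $g^v$ supported inside a single component, followed by the routine internal-direct-product checks, is precisely the standard argument the authors evidently have in mind. Your observation that the closed neighborhood $\nbhd{v}$ cannot straddle two components is indeed the crux, and your handling of the implicit coordinate identification $G^n \cong G^{n_1} \times \cdots \times G^{n_k}$ justifies the equality sign in the statement.
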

    
    Next we extend Example~\ref{eg:K_n}:
    
    \begin{defi}
    Two vertices $i$ and $j$ of $\Gamma$ with $i \neq j$ are \emph{neighborhood-indistinguishable} if $\nbhd{i} = \nbhd{j}$; otherwise they are \emph{neighborhood-distinguishable}.
    \end{defi}
    
    \begin{prop} \label{prop:indistinguishable}
    Suppose that $\Gamma$ is a graph with vertices $i$ and $j$ that are neighborhood-indistinguishable. Let $\Gamma - j$ be the subgraph of $\Gamma$ induced by all vertices except for $j$. Then $G^\Gamma \cong G^{\Gamma - j}$.
    \end{prop}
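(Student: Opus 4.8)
The plan is to exploit the fact that neighborhood-indistinguishability forces rows $i$ and $j$ of the activation matrix $A_\Gamma$ to coincide, and then to show that the coordinate projection $G^n \to G^{n-1}$ that forgets the $j$-th coordinate restricts to an isomorphism $G^\Gamma \to G^{\Gamma - j}$. Since the row of $A_\Gamma$ indexed by a vertex $v$ is the indicator vector of $\nbhd{v}$, the hypothesis $\nbhd{i} = \nbhd{j}$ says that rows $i$ and $j$ of $A_\Gamma$ are literally the same vector, whence $g^i = g^j$ for every $g \in G$. Because $\Adj(\Gamma)$ is symmetric, $A_\Gamma$ is symmetric, so columns $i$ and $j$ are equal as well; it is this column statement that does the real work.

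I would next extract the containment of $G^\Gamma$ in a diagonal subgroup. Equality of columns $i$ and $j$ means that in any single generator $g^v$, the entry placed in coordinate $i$ equals the entry placed in coordinate $j$. Consequently, in any product of generators the $i$-th and $j$-th coordinates receive the same group element at each step and so remain equal throughout the product. This is the one genuinely non-abelian subtlety, and the step I would take the most care with: since $G$ need not be abelian we cannot argue via coordinate sums, but because the two coordinates are fed identical elements in identical order, they stay equal regardless of commutativity. Hence every element of $G^\Gamma$ lies in the subgroup $\{(g_1,\dots,g_n)\in G^n : g_i = g_j\}$.

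Now let $\pi : G^n \to G^{n-1}$ delete the $j$-th coordinate. First note $i \neq j$ and, since $i \in \nbhd{i} = \nbhd{j}$, the vertices $i$ and $j$ are adjacent but distinct, so coordinate $i$ is retained by $\pi$. On the diagonal subgroup above, $\pi$ is injective: two of its elements that agree off coordinate $j$ agree in coordinate $i$, and since each has $g_j = g_i$ they agree in coordinate $j$ too. Thus $\pi$ restricts to an isomorphism of $G^\Gamma$ onto its image, and it only remains to identify that image with $G^{\Gamma-j}$.

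Finally I would track generators. For a vertex $v \neq j$, the closed neighborhood of $v$ in $\Gamma - j$ is exactly $\nbhd{v} \setminus \{j\}$, so $\pi(g^v)$ is precisely the generator ``click $v$ with $g$'' in $\Gamma - j$. For $v = j$, the equality $\nbhd{j} = \nbhd{i}$ gives $\pi(g^j) = \pi(g^i)$, the generator ``click $i$ with $g$'' in $\Gamma - j$, which is already among the others and hence redundant. Therefore $\pi$ carries the generating set $\{g^v : g \in G,\ v \in V(\Gamma)\}$ of $G^\Gamma$ onto the generating set $\{g^v : g \in G,\ v \in V(\Gamma - j)\}$ of $G^{\Gamma - j}$, so $\pi(G^\Gamma) = G^{\Gamma - j}$, and the restriction of $\pi$ is the desired isomorphism.
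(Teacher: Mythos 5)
Your proof is correct and follows essentially the same route as the paper: both establish that coordinates $i$ and $j$ agree on every element of $G^\Gamma$ (the paper phrases this as $\pi_i(x) = \pi_j(x)$, you via equality of columns of $A_\Gamma$) and then show the projection forgetting coordinate $j$ is an isomorphism onto $G^{\Gamma - j}$. The only difference is one of detail: you verify surjectivity explicitly by tracking generators and phrase injectivity via the diagonal subgroup, whereas the paper calls the epimorphism ``natural'' and argues the kernel is trivial, which amounts to the same thing.
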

    
    \begin{proof}
    First, note that if $i$ and $j$ are neighborhood-indistinguishable, then $\pi_i(x) = \pi_j(x)$ for all $x \in G^\Gamma$, since every element $g_{B(k)}$ that affects coordinate $i$ also affects coordinate $j$ in the same way.
    
    There is a natural epimorphism from $G^\Gamma$ to $G^{\Gamma - j}$ where we just forget coordinate $j$. The kernel consists of all elements of $G^\Gamma$ of the form $g_j$ (that is, with only the identity in every coordinate other than $j$). Since $\pi_i(x) = \pi_j(x)$ for all $x \in G^\Gamma$, the only element of $G^\Gamma$ of the form $g_j$ is the identity.
    \end{proof}
    
    In light of Propositions~\ref{prop:conn-comp} and \ref{prop:indistinguishable}, henceforth we will assume that our graphs are connected and that all vertices are neighborhood-distinguishable. The number of such graphs on $n$ vertices is given by \cite[A004108]{oeis}; this is not obvious from the definition, but is established in \cite{mating-graphs}, which studies ``mating graphs'' where no two vertices have the same closed neighborhood.

\section{Graph powers of an abelian group} \label{sec:abelian}

\subsection{Determining $G^S$ from the lattice $\mathcal{L} = \langle S\rangle$}

We saw in Propositions~\ref{Z-AW-universal} and \ref{kZ-pseudo-universal} some ways that the structure of $\langle S \rangle$ can affect the structure of $G^S$. In fact, we will see that if $G$ is abelian, then in some sense $G^S$ is completely determined by $\langle S \rangle$. 

We will find it easier to work with matrices here rather than subsets of $\Z^n$. If $S \subseteq \Z^n$ and $|S| = m$, then we can find a basis for the integer lattice $\mathcal{L} = \langle S \rangle$ and then construct an $m \times n$ matrix $M$ whose rows are the vectors in the chosen basis. Then $G^M = G^{\langle S \rangle} = G^S$. 

If $G$ is abelian, one way of understanding the structure of $G^M$ is by working over the principal ideal domain $\Z$ and finding the Smith Normal Form $N$ of $M$ (cf. \cite{DummitandFoote}) and the elementary divisors. If we have $U \in GL_m(\Z), V \in GL_n(\Z)$ so that $UMV = N$, then $G^S = G^nM = G^nU^{-1}NV^{-1} = G^nNV^{-1}$ is isomorphic to $G^nN$. If the main diagonal of $N$ contains $s$ 1's, followed by $a_1, a_2, \dots, a_t$ (with $a_1 \mid a_2 \mid \cdots \mid a_t$) and then $u$ 0's (where $u = \min(m, n) - s - t$), then we have 
\begin{align}
    \label{desc-of-G^S-from-eldivs}
    &G^M \cong G^s \oplus \bigoplus_{i=1}^t a_i G\qquad \text{ and }\\
    \label{desc-of-kernel}
    &G^n / G^M \cong \bigoplus_{i=1}^t G/a_iG \oplus G^u
\end{align}
Thus, below we will content ourselves with finding the $s + t$ nonzero elementary divisors of the matrix $M$ and allow the reader to use Equations \eqref{desc-of-G^S-from-eldivs} and \eqref{desc-of-kernel} to describe $G^S$. See \cite{electrician}, Theorem~5 for one explicit description of the tuples in $G^S$ when $G$ is finite.

\subsection{Description of $\Z^\G$ for some basic graphs}
\label{sect:ZG-basic-graphs}
For the remainder of this section, we will work with the special case $M = A_\G$ for some graph $\G$ and determine the elementary divisors for the activation matrices corresponding to some well-known families of graphs. From there, Equations \eqref{desc-of-G^S-from-eldivs} and \eqref{desc-of-kernel} can be used to describe $G^\G$ for every abelian group $G$.

We will prove the following results by finding a convenient basis for $G^\G$ that essentially starts with rows with a leading 1, meaning we can set the value of the corresponding vertex to anything we like, regardless of the values of the vertices that have come before. We will then show the rest of the vertices are completely fixed (meaning their corresponding row can be removed from $A_\G$ without any loss since that row can be expressed as a linear combination of previous rows) or can be changed by a multiple of some number $x$ (meaning $x$ is the leading entry in the row).

\begin{prop} \label{prop:Z-path}
Let $\G = P_n$, the path graph with $n$ vertices. If $n \equiv 2$ (mod $3$), then the elementary divisors of $\G$ are $(1^{n-1}, 0)$, and otherwise they are $(1^n)$ so that $\Z^\G = \Z^n$.
\end{prop}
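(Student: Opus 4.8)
The plan is to work directly with the activation matrix $A_{P_n} = \Adj(P_n) + I_n$, which is the $n \times n$ tridiagonal matrix having $1$'s on the main diagonal and on both off-diagonals, and to pin down its Smith Normal Form through its determinant together with a single large minor. Let $D_n = \det A_{P_n}$. Expanding along the first row of the tridiagonal matrix gives the recurrence
\[ D_n = D_{n-1} - D_{n-2}, \qquad D_1 = 1, \quad D_2 = 0. \]
This linear recurrence is periodic with period $6$: the values cycle through $1, 0, -1, -1, 0, 1$. In particular $D_n = 0$ exactly when $n \equiv 2 \pmod 3$, and $D_n = \pm 1$ otherwise.

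When $n \not\equiv 2 \pmod 3$ we have $\det A_{P_n} = \pm 1$, so $A_{P_n} \in \GL_n(\Z)$. Its Smith Normal Form is therefore the identity matrix, every elementary divisor equals $1$, and $\Z^\G = \Z^n$; one may alternatively invoke Proposition~\ref{Z-AW-universal} once the rows are known to span $\Z^n$. This settles the second case at once.

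The remaining case $n \equiv 2 \pmod 3$ is the substantive one, and the idea is to exhibit a single $(n-1) \times (n-1)$ minor equal to $\pm 1$. Deleting the last row and last column of $A_{P_n}$ leaves exactly the matrix $A_{P_{n-1}}$, whose determinant is $D_{n-1}$. Since $n \equiv 2 \pmod 3$ forces $n - 1 \equiv 1 \pmod 3$, we have $D_{n-1} = \pm 1$, so this minor is a unit. Using the facts recalled in \cref{sec:linear-algebra} — namely that the $i$-th elementary divisor is $a_i = d_i(M)/d_{i-1}(M)$, that $d_i(M)$ is the gcd of the $i \times i$ minors, and that $d_i \mid d_{i+1}$ — a unit $(n-1)$-minor forces $d_{n-1}(A_{P_n}) = 1$; then $d_1 \mid \cdots \mid d_{n-1} = 1$ makes every $d_i = 1$ for $i \le n-1$, hence $a_i = 1$ for all $i \le n-1$. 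Because $\det A_{P_n} = 0$ the rank is at most $n-1$, and the nonvanishing minor shows it equals $n-1$, so there is exactly one zero elementary divisor. The elementary divisors are thus $(1^{n-1}, 0)$, as claimed.

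I do not anticipate a genuine obstacle; the one point demanding care is the determinant evaluation, where the recurrence and its period-$6$ pattern must be matched correctly so that the class $n \equiv 2 \pmod 3$ corresponds both to the vanishing of $D_n$ and to the unit value of $D_{n-1}$. Everything else follows from the elementary-divisor machinery already available, and the ``delete the last row and column'' minor is precisely what reduces the hard case to the easy one. If one instead prefers the row-reduction style announced just before the proposition, the same recurrence reappears as the pattern of leading entries produced during elimination along the path.
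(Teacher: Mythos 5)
Your proof is correct, but it takes a genuinely different route from the paper's. The paper argues constructively in the language of clicks: it first shows that by clicking along the path one can set the first $n-1$ coordinates to arbitrary values (which already forces the first $n-1$ elementary divisors to be $1$), and then settles the last coordinate by exhibiting explicit alternating sums of rows --- producing $\bbe_n$ when $n \not\equiv 2 \pmod 3$, and exhibiting the last row as an integer combination of the earlier rows when $n \equiv 2 \pmod 3$, so that the final divisor is $0$. You instead work entirely with determinants: the recurrence $D_n = D_{n-1} - D_{n-2}$ with $D_1 = 1$, $D_2 = 0$ is right (the period-$6$ cycle $1,0,-1,-1,0,1$ vanishes exactly at $n \equiv 2 \pmod 3$), the deletion of the last row and column does give $A_{P_{n-1}}$, and the unit $(n-1)$-minor together with the gcd characterization $a_i = d_i/d_{i-1}$ correctly yields $(1^{n-1},0)$ in the degenerate case. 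What each approach buys: yours is shorter and self-contained, needing only the elementary-divisor machinery of \cref{sec:linear-algebra}, and it cleanly isolates why the threshold is $n \equiv 2 \pmod 3$ in a single recurrence; the paper's clicking argument is less slick but shows \emph{how} to realize each target configuration by explicit moves, and those same explicit alternating row sums are reused almost verbatim in the proof of \cref{prop:Z-cycle} for cycles, where a pure determinant computation would not by itself identify the final divisor $3$ (there one needs the row-sum argument of \cref{row-sums-fund-factors} anyway). So your argument is a valid and arguably more economical substitute for this proposition, at the cost of the constructive information the paper's method carries forward.
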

\begin{proof}
It is always possible to set the first $n-1$ coordinates to be anything we wish (say $(a_1, \ldots, a_{n-1}, *)$); first click $v_2$ with $a_1$, and then proceed to the right, always clicking $v_i$ with whatever we need to set the $(i-1)$-st coordinate to $a_{i-1}$. The only question that remains is how much freedom we have to set the last coordinate. Let $\bx_1, \ldots, \bx_n$ be the rows of $A_\G$. If $n \equiv 0$ (mod $3$), then
\[ \bbe_n = (\bx_2 + \bx_5 + \cdots + \bx_{n-1}) - (\bx_1 + \bx_4 + \cdots + \bx_{n-2}), \]
and so we can also set the last coordinate arbitrarily. Similarly, if $n \equiv 1$ (mod $3$), then
\[ \bbe_n = (\bx_1 + \bx_4 + \cdots + \bx_n) - (\bx_2 + \bx_5 + \cdots + \bx_{n-2}), \]
and so we can also set the last coordinate arbitrarily in that case. Thus, if $n \not \equiv 2$ (mod $3$), then $\Z^\G = \Z^n$.


Now suppose that $n \equiv 2$ (mod $3$). Then
\[ \bx_{3n+2} = (\bx_1 + \bx_4 + \cdots + \bx_{3n+1}) - (\bx_2 + \bx_5 + \cdots + \bx_{3n-1}), \]
which means the last row of an echelon form of $M$ must be zero, hence so must the last elementary divisor.
\end{proof}

\begin{prop} \label{prop:Z-cycle}
Let $\G = C_n$, the cycle graph with $n$ vertices. If $n$ is divisible by $3$, then the elementary divisors are $(1^{n-2}, 0^2)$, and otherwise they are $(1^{n-1}, 3)$.
\end{prop}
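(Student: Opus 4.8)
The plan is to compute the integer cokernel $\Z^n / \mathcal{L}$, where $\mathcal{L} = \langle \bx_1, \ldots, \bx_n\rangle$ is the lattice spanned by the rows of $A_{C_n}$, and then read off the elementary divisors via \eqref{desc-of-kernel}. Writing the rows cyclically as $\bx_i = \bbe_{i-1} + \bbe_i + \bbe_{i+1}$ (indices mod $n$), the key observation is the telescoping identity $\bx_{i+1} - \bx_i = \bbe_{i+2} - \bbe_{i-1}$. Thus $\mathcal{L}$ contains every vector $\bbe_j - \bbe_{j-3}$, and these generate a sublattice $D$ whose structure depends only on $\gcd(3,n)$: indeed $D$ is the lattice of ``edge differences'' of the graph on $\{1, \ldots, n\}$ whose edges join indices differing by $3$ modulo $n$.

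First I would pin down $D$. The permutation $j \mapsto j + 3 \pmod n$ has $\gcd(3,n)$ orbits: a single orbit when $3 \nmid n$, and three orbits (the residue classes mod $3$) when $3 \mid n$. On each orbit the edge differences are the consecutive differences around a single cycle, which form a $\Z$-basis of the sum-zero lattice of that orbit's coordinates. Hence when $3 \nmid n$ we get $D = \{v \in \Z^n : \sum_i v_i = 0\}$, while when $3 \mid n$ we get $D = \{v : \psi_0(v) = \psi_1(v) = \psi_2(v) = 0\}$, where $\psi_r(v)$ denotes the sum of the coordinates $v_j$ with $j \equiv r \pmod 3$.

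Next I would reincorporate one row to pass from $D$ to $\mathcal{L} = \Z\bx_1 + D$. Because three consecutive indices meet each residue class mod $3$ exactly once, every row $\bx_i$ has coordinate sum $3$ and, when $3 \mid n$, satisfies $\psi_0(\bx_i) = \psi_1(\bx_i) = \psi_2(\bx_i) = 1$. In the case $3 \nmid n$ it follows that $\mathcal{L} = \{v : \sum_i v_i \equiv 0 \pmod 3\}$, so the total-sum-mod-$3$ map gives $\Z^n/\mathcal{L} \cong \Z_3$; comparing with \eqref{desc-of-kernel} yields elementary divisors $(1^{n-1}, 3)$. In the case $3 \mid n$ one checks $\mathcal{L} = \{v : \psi_0(v) = \psi_1(v) = \psi_2(v)\}$, and the map $v \mapsto (\psi_0(v) - \psi_1(v),\ \psi_1(v) - \psi_2(v))$ identifies $\Z^n/\mathcal{L} \cong \Z^2$, which is torsion-free of rank $2$, so \eqref{desc-of-kernel} gives elementary divisors $(1^{n-2}, 0^2)$. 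As a consistency check, these match the determinant of $A_{C_n}$, which is $\pm 3$ when $3 \nmid n$ and $0$ when $3 \mid n$.

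The main obstacle is the claim that the edge differences generate each orbit's sum-zero lattice \emph{exactly}, with no extra finite index --- that is, that $D$ is genuinely the full (orbit-wise) sum-zero lattice rather than a proper finite-index sublattice. This is precisely what guarantees the cokernel is $\Z_3$ or $\Z^2$ and carries no spurious torsion. It reduces to the standard fact that the $m-1$ consecutive differences $\bbe_{c_0} - \bbe_{c_1}, \ldots, \bbe_{c_{m-2}} - \bbe_{c_{m-1}}$ along a spanning path of a single $m$-cycle $c_0, \ldots, c_{m-1}$ form a $\Z$-basis of $\{v : \sum_k v_{c_k} = 0\}$; once this is established, both cases follow from the bookkeeping above.
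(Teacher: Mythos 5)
Your proof is correct, and it takes a genuinely different route from the paper's. The paper argues operationally in the spirit of the clicking metaphor: it first shows the first $n-2$ coordinates can be set freely (giving $n-2$ unit divisors), then produces explicit alternating sums of rows such as $\bx_1 + \bx_4 + \cdots$ to handle the last two coordinates, using the relation $\sum_i \bx_i = (3,\ldots,3)$ together with Proposition~\ref{row-sums-fund-factors} to pin the final divisor at exactly $3$ when $3 \nmid n$. You instead identify the lattice $\mathcal{L}$ \emph{exactly}: the telescoping identity $\bx_{i+1} - \bx_i = \bbe_{i+2} - \bbe_{i-1}$ reduces everything to the difference lattice $D$ of the permutation $j \mapsto j+3 \pmod n$, whose orbit structure ($\gcd(3,n)$ cycles) yields the congruence descriptions $\mathcal{L} = \{v : \sum_i v_i \equiv 0 \pmod 3\}$ or $\mathcal{L} = \{v : \psi_0(v) = \psi_1(v) = \psi_2(v)\}$, from which the cokernel ($\Z_3$ or $\Z^2$) and hence the elementary divisors fall out in one step. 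Your flagged obstacle --- that the spanning-path differences $\bbe_{c_k} - \bbe_{c_{k+1}}$ form a $\Z$-basis of the orbit's sum-zero lattice, not merely a finite-index sublattice --- is indeed the crux, but it is the standard fact that these vectors are a basis of the root lattice $A_{m-1}$ after the unimodular relabeling of coordinates by the cycle, so no spurious torsion can arise; note your argument even handles the degenerate case $n = 3$ (orbits of size one, $D = 0$, $\mathcal{L} = \Z(1,1,1)$) uniformly. What each approach buys: the paper's proof needs no structural lemma and parallels its treatment of $P_n$ and $K_{m,n}$, but it splits into hand-crafted row combinations and needs the row-sum trick for the lower bound $3 \mid a_n$; yours obtains upper and lower bounds on the last divisor simultaneously, gives an explicit membership criterion for $\Z^{C_n}$ as a bonus, and makes the $\gcd(3,n)$ dichotomy conceptually transparent rather than an artifact of which index sums happen to telescope.
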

\begin{proof}
Let $\bx_i = \bbe_{i-1} + \bbe_i + \bbe_{i+1}$ with indices mod $n$. We can set the first $n-2$ coordinates to be anything, say $a_1, \ldots, a_{n-2}$: click $v_2$ with $a_1$, and work our way to the right, clicking $v_{i+1}$ with whatever we need to make the $i$th coordinate equal to $a_i$. This shows that the first $n-2$ elementary divisors are 1. The question that remains is which vectors of the form $(0, \ldots, 0, *, *)$ we can obtain.

First, suppose that $n = 3k$. Then
\[ \bx_{3n} = (\bx_1 + \bx_4 + \cdots + \bx_{3k-2}) - (\bx_3 + \bx_6 + \cdots + \bx_{3k-3}) \]
and
\[ \bx_{3n-1} = (\bx_1 + \bx_4 + \cdots + \bx_{3k-2}) - (\bx_2 + \bx_5 + \cdots + \bx_{3k-4}). \]
This implies the last two rows of $H_\G$ are zero.

Now suppose that $n$ is not divisible by 3. If $n = 3k+2$, then
\begin{align*}
    (\bx_1 + \bx_4 + \cdots + \bx_{3k+1}) &- (\bx_2 + \bx_5 + \cdots + \bx_{3k-1})\\
    &= (0, 0, \ldots, 0, 1, *).
\end{align*}
Similarly, if $n = 3k+1$, then
\begin{align*}
    -(\bx_1 + \bx_4 + \cdots + \bx_{3k-2}) &+ (\bx_2 + \bx_5 + \cdots + \bx_{3k-1})\\
    &= (0, 0, \ldots, 0, 1, *).
\end{align*}
This gives us an additional elementary divisor of 1.

It remains to find the last elementary divisor. Note that
\[ \sum \bx_i = (3, 3, \ldots, 3), \]
and since $\bbe_1, \ldots, \bbe_{n-2}$ are in $\Z^\G$, it follows that $3 \bbe_n \in \Z^\G$. On the other hand, since every row has sum 3, Proposition~\ref{row-sums-fund-factors} implies that the last elementary divisor is divisible by 3. It follows that the last elementary divisor is precisely 3.
\end{proof}

\begin{prop} \label{prop:Z-bipartite}
Let $\Gamma = K_{m,n}$, the complete bipartite graph. Then the elementary divisors of $\G$ are $(1^{m+n-1}, mn-1)$. 
\end{prop}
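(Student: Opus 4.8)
The plan is to compute the elementary divisors of $A_\G$ directly by exploiting the block structure of the complete bipartite graph. Label the two parts $u_1,\dots,u_m$ and $w_1,\dots,w_n$. Since $\nbhd{u_i}=\{u_i,w_1,\dots,w_n\}$ and $\nbhd{w_j}=\{w_j,u_1,\dots,u_m\}$, the activation matrix takes the block form $A_\G=\begin{pmatrix} I_m & J \\ J^{T} & I_n\end{pmatrix}$, where $J$ is the $m\times n$ all-ones matrix; equivalently, the rows are $r_i=\bbe_{u_i}+\sum_{j}\bbe_{w_j}$ for $i=1,\dots,m$ and $s_j=\bbe_{w_j}+\sum_i\bbe_{u_i}$ for $j=1,\dots,n$.

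The first step is to peel off the ``free'' coordinates. Subtracting $r_1$ from each $r_i$ ($i\ge 2$) produces the intra-part difference $\bbe_{u_i}-\bbe_{u_1}$, and likewise subtracting $s_1$ from each $s_j$ ($j\ge 2$) produces $\bbe_{w_j}-\bbe_{w_1}$, giving $m+n-2$ lattice vectors. I would then perform a unimodular change of basis on $\Z^{m+n}$ sending $f_1=\bbe_{u_1}$, $f_i=\bbe_{u_i}-\bbe_{u_1}$ ($i\ge2$), $g_1=\bbe_{w_1}$, $g_j=\bbe_{w_j}-\bbe_{w_1}$ ($j\ge 2$) to a standard basis. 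In these coordinates the difference vectors are exactly $f_2,\dots,f_m,g_2,\dots,g_n$, which account for $m+n-2$ elementary divisors equal to $1$. Reducing the two remaining rows modulo these vectors, a short calculation gives $r_1\equiv f_1+n g_1$ and $s_1\equiv m f_1+g_1$, so that $\langle S\rangle$ splits as the direct sum of $\langle f_2,\dots,f_m,g_2,\dots,g_n\rangle$ and the rank-$2$ sublattice of $\langle f_1,g_1\rangle$ generated by $(1,n)$ and $(m,1)$.

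It then remains to analyze this $2\times2$ piece. The matrix $\begin{pmatrix}1&n\\ m&1\end{pmatrix}$ has a unit entry, so its first elementary divisor is $1$, and since its determinant is $1-mn$ the product of its two invariant factors is $|1-mn|=mn-1$; hence they are $1$ and $mn-1$. Concatenating with the $m+n-2$ ones from the free part yields the claimed list $(1^{m+n-1},\,mn-1)$. The main thing to watch is bookkeeping rather than any genuine obstacle: one must confirm that the change of basis is unimodular and that reduction modulo the difference vectors produces an honest direct sum, so that the invariant factors of the two blocks may be concatenated. As a sanity check one can compute $\det A_\G=1-mn$ via a Schur complement (the complement is $I_n-J^{T}J=I_n-mJ_n$, whose eigenvalues are $1$ with multiplicity $n-1$ and $1-mn$ once), matching the product of the claimed divisors; the degenerate case $m=n=1$ gives $mn-1=0$, consistent with $K_{1,1}=P_2$ in Proposition~\ref{prop:Z-path}.
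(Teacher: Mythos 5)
Your proof is correct, but it takes a genuinely different route from the paper's. The paper stays inside the clicking metaphor: it first gives an explicit clicking strategy showing that the $m+n-1$ coordinates other than $v_n$ can be set to arbitrary integers (accounting for $m+n-1$ unit divisors), and then determines the kernel of the projection onto those coordinates via a small symmetry lemma --- if two vertices have identical neighborhoods, then any click combination producing equal values at them must use equal click coefficients --- which forces every lattice element supported on $v_n$ alone to be a multiple of $(mn-1)\bbe_{v_n}$, with that element achieved. You instead compute directly with the block matrix $A_\G = \left(\begin{smallmatrix} I_m & J \\ J^{T} & I_n \end{smallmatrix}\right)$: unimodular row operations extract the difference vectors $\bbe_{u_i}-\bbe_{u_1}$ and $\bbe_{w_j}-\bbe_{w_1}$, and your change of basis (which is indeed unimodular, being triangular with unit diagonal in a suitable ordering) splits the lattice as $\Z^{m+n-2}$ direct sum the rank-$2$ sublattice generated by $(1,n)$ and $(m,1)$; the block splitting is honest because after the coordinate change the two generator families are supported on complementary coordinate sets, so invariant factors concatenate, and the $2\times 2$ piece has invariant factors $1$ and $|1-mn| = mn-1$ by the gcd/determinant formula. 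What each approach buys: the paper's argument fits its general program for Section~\ref{sect:ZG-basic-graphs} (convenient bases found by explicit clicking, plus a reusable observation about vertices with common neighborhoods that echoes the neighborhood-indistinguishability theme), whereas yours is a self-contained Smith-normal-form computation that makes the source of the divisor $mn-1$ transparent as a $2\times 2$ determinant, comes with the built-in consistency check $\det A_\G = 1-mn$ via the Schur complement, and correctly handles the degenerate case $m=n=1$ (where $mn-1=0$, matching Proposition~\ref{prop:Z-path} for $P_2$).
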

\begin{proof}
Let us label the vertices in one partition $u_1, \ldots, u_m$ and in the other $v_1, \ldots, v_n$. First, let us show that if we drop $v_n$, we can get any combination of integers in the remaining coordinates. Suppose we want to end up with $x_i$ in each $u_i$ and $y_j$ in each $v_j$ except for $v_n$. Let $x = \sum x_i$. If we click each $u_i$ with $x_i$, click $v_j$ with $-x + y_j$ for $j \in \{1, \ldots, n-1\}$, and then click $v_n$ with $(n-1)x - (y_1 + \cdots + y_{n-1})$, then we accomplish our goal.

Before proceeding, let us prove a lemma. Suppose $v_i$ and $v_j$ in $\Gamma$ have the same neighbors. If $a_1 \bx_1 + \cdots + a_n \bx_n$ has coordinates $i$ and $j$ equal, then $a_i = a_j$. To see this, let $\bv = a_1 \bx_1 + \cdots + a_n \bx_n$ and let $s = \sum a_k$, over those $k$ such that $v_k$ is a neighbor of $v_i$. Then the value of coordinate $i$ in $\bv$ is $s + a_i$, and it is $s + a_j$ in coordinate $j$, and the lemma follows.

Now consider the elements of $\Z^\Gamma$ that have a 0 in every coordinate other than $v_n$. By the lemma above, the vertices $u_i$ are all clicked with the same element, say $a$, and the vertices $v_1, \ldots, v_{n-1}$ are all clicked with the same element, say $b$. Let $c$ be the element that we click $v_n$ with. Now, considering that we want the state of $v_1$ to be $0$ means that $b + ma = 0$, and so $b = -ma$. Then, considering that we want the state of $u_1$ to be $0$ means that $a + (n-1)b + c = 0$. Thus,
\[ a + (n-1)(-ma) + c = 0, \]
and so $c = a((n-1)m - 1)$. Then the state of vertex $v_n$ is 
\[ am + c = am + a((n-1)m - 1) = (mn-1)a. \]
Since $a$ was arbitrary to begin with, we have described a basis for $G^\G$ consisting of rows of the form $(0, \dots, 0, 1, *, \dots, *)$ and $(0, \dots, 0, mn-1)$. Thus we see that the last elementary divisor is $(mn-1)$ .
\end{proof}

\begin{coro}
Let $\Gamma = \St_n$, the star graph with $n+1$ vertices. Then the elementary divisors of $\G$ are $(1^n, n-1)$. 
\end{coro}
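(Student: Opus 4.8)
The plan is to recognize the star graph as a degenerate complete bipartite graph and invoke the preceding proposition. Specifically, $\St_n$ consists of a single center vertex $c$ adjacent to $n$ leaves $\ell_1, \ldots, \ell_n$, with no edges among the leaves, so it is exactly $K_{1,n}$: take one partition class to be $\{c\}$ (so $m = 1$) and the other to be the $n$ leaves. First I would note that this identification respects the standing assumptions, since for $n \ge 2$ the center and each leaf have distinct closed neighborhoods ($\nbhd{c}$ is all of $\St_n$, while each $\nbhd{\ell_j} = \{c, \ell_j\}$), so all vertices are neighborhood-distinguishable.

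With the identification in hand, I would apply Proposition~\ref{prop:Z-bipartite}, which gives the elementary divisors of $K_{m,n}$ as $(1^{m+n-1}, mn-1)$. Substituting $m = 1$ yields $(1^{1+n-1}, 1\cdot n - 1) = (1^n, n-1)$, which is precisely the claimed answer.

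Since this is a direct specialization there is essentially no obstacle; the only thing to verify is the correspondence of vertex labels, which is immediate. For completeness one could instead give a self-contained argument mirroring the bipartite proof: the activation matrix has one row $(1, \ldots, 1)$ for the center and rows $\bbe_c + \bbe_{\ell_j}$ for the leaves. One sets the center and the first $n-1$ leaves arbitrarily (contributing the $n$ leading $1$'s), and then, tracing through the constraints imposed by clicking each leaf $\ell_j$ with some $a_j$ and the center with some $b$, one finds the final leaf's state is forced to lie in $(n-1)\Z$, recovering the last elementary divisor. The hard part, such as it is, would be the bookkeeping along this direct route, which the bipartite specialization avoids entirely.
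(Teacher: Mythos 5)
Your proof is correct and matches the paper's intent exactly: the corollary is stated immediately after Proposition~\ref{prop:Z-bipartite} with no separate proof, precisely because $\St_n = K_{1,n}$ and the substitution $m=1$ gives $(1^n, n-1)$ at once. Your checks (neighborhood-distinguishability, and the optional direct verification via the activation matrix) are sound but not needed, since the proposition's argument goes through verbatim for $m=1$.
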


\subsection{The elementary divisors of small graphs}

Using the computational ideas in the previous section, along with the database of all graphs with up to 7 vertices in SageMath, we determined that the number of graphs on $n$ vertices such that $\Z^\Gamma = \Z^{|\Gamma|}$ is 
\[ 1, 0, 1, 1, 6, 20, 172, \ldots. \]

\cref{tab:small-non-AW-graphs} lists the elementary divisors for those graphs on up to 5 vertices with the property that $\Z^\Gamma \neq \Z^{|\Gamma|}$:

\begin{table}[htbp]
    \centering
\begin{tabular}{|c|c|c|} \hline
\textbf{$n$} & \textbf{Graph} & \textbf{El. div.'s} \\ \hline
4 & $C_4$ & $(1^3, 3)$ \\ \hline
4 & $\St_3$ & $(1^3, 2)$ \\ \hline
5 & $P_5$ & $(1^4, 0)$ \\ \hline
5 & $C_5$ & $(1^4, 3)$ \\ \hline
5 & $\St_4$ & $(1^4, 3)$ \\ \hline
5 & $K_{2,3}$ & $(1^4, 5)$ \\ \hline
5 & tadpole graph $T_{4,1}$ & $(1^4, 2)$ \\ \hline
\end{tabular}
    \caption{Graphs on up to 5 vertices with $\Z^\G \neq \Z^{|\G|}$}
    \label{tab:small-non-AW-graphs}
\end{table}

Let us highlight one more graph here, because it will be a prominent example later.

\begin{prop}
\label{prop:Q3_ZZ}
The elementary divisors of $\G = Q_3$, the graph of the $3$-cube, are $(1^4, 2, 0^3)$.
\end{prop}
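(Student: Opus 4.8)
The plan is to exploit the recursive product structure of the cube rather than reducing the full $8 \times 8$ matrix by hand. Order the vertices so that $v_1, \ldots, v_4$ are the bit strings beginning with $0$ and $v_5, \ldots, v_8$ are the bit strings beginning with $1$. Flipping the leading bit is a perfect matching between the two layers (sending $v_i$ to $v_{i+4}$), while the induced subgraph on each layer is a copy of $Q_2 \cong C_4$. Writing $B = \Adj(C_4) + I_4$ for the activation matrix of this $4$-cycle, the block form of the activation matrix becomes
\[ A_{Q_3} = \begin{pmatrix} B & I_4 \\ I_4 & B \end{pmatrix}, \]
where the off-diagonal identity blocks record the matching. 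The key feature is that these off-diagonal blocks are \emph{exactly} $I_4$, which makes them usable as unimodular pivots over $\Z$.

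Next I would perform a block Smith reduction. Left-multiplying $A_{Q_3}$ by the unimodular matrix $\left(\begin{smallmatrix} I_4 & 0 \\ -B & I_4 \end{smallmatrix}\right)$ clears the lower-left block against the lower-right one and yields $\left(\begin{smallmatrix} B & I_4 \\ I_4 - B^2 & 0 \end{smallmatrix}\right)$; a matching column operation (right-multiplication by $\left(\begin{smallmatrix} I_4 & 0 \\ -B & I_4 \end{smallmatrix}\right)$) then clears the upper-left block, leaving the block-antidiagonal matrix $\left(\begin{smallmatrix} 0 & I_4 \\ I_4 - B^2 & 0 \end{smallmatrix}\right)$. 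Since a block column swap is unimodular, the elementary divisors of $A_{Q_3}$ are exactly those of $I_4$ together with those of $I_4 - B^2$; that is, four $1$'s followed by the elementary divisors of $I_4 - B^2$. This isolates the entire arithmetic content of the problem into the single $4 \times 4$ matrix $I_4 - B^2$.

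It then remains to compute $I_4 - B^2$. Counting closed walks of length two in $C_4$ (or directly squaring $B$) gives $B^2 = I_4 + 2J_4$, where $J_4$ is the all-ones matrix; equivalently, neighbors, antipode, and the vertex itself exhaust all four vertices, so $\Adj(C_4) + P + I_4 = J_4$ for the antipodal matching $P$, and substituting into $B^2 = \Adj(C_4)^2 + 2\Adj(C_4) + I_4 = 3I_4 + 2(\Adj(C_4)+P)$ yields $I_4 + 2J_4$. Hence $I_4 - B^2 = -2J_4$, whose entries are all $\pm 2$; since $J_4$ has rank $1$, the elementary divisors of $-2J_4$ are $(2, 0, 0, 0)$. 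Combining with the four $1$'s from the block reduction gives $(1^4, 2, 0^3)$, as claimed.

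\textbf{Main obstacle.} The delicate point is pinning the unique nonzero even divisor to be exactly $2$ rather than a higher even number. The rank of $A_{Q_3}$ over $\Q$ is $5$ (three zero divisors) and over $\F_2$ is $4$, so precisely one nonzero divisor is even; the block reduction is what controls its exact value by reducing everything to the gcd of the entries of $-2J_4$. As a cross-check, note that every row sum of $A_{Q_3}$ equals $4$, so Proposition~\ref{row-sums-fund-factors} already forces an even elementary divisor, which must be the one produced by the $-2J_4$ block. The only care needed is to verify that the block operations are genuinely unimodular over $\Z$ (which holds because the matched blocks are exactly $I_4$) and that the Schur-complement computation $I_4 - B^2 = -2J_4$ is carried out correctly, since that single factor of $2$ is the whole point.
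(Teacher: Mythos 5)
Your proof is correct, and every step checks out: with the two layers labeled compatibly under the leading-bit matching, $A_{Q_3}$ does have the block form $\left(\begin{smallmatrix} B & I_4 \\ I_4 & B \end{smallmatrix}\right)$ with $B = \Adj(C_4) + I_4$; both triangular block multipliers are unimodular; and $\Adj(C_4)^2 = 2I_4 + 2P$ together with $\Adj(C_4) + P + I_4 = J_4$ gives $B^2 = I_4 + 2J_4$, so the Schur complement is $I_4 - B^2 = -2J_4$, a rank-one matrix with entry gcd $2$ and hence divisors $(2,0,0,0)$. One point you leave implicit: concatenating the elementary divisors of the two diagonal blocks of a block-diagonal integer matrix is not valid in general (invariant factors of a direct sum must be re-merged to restore the divisibility chain), but it is harmless here because the first block is $I_4$ and contributes only units, so $(1^4,2,0^3)$ already satisfies $1 \mid 1 \mid 1 \mid 1 \mid 2$. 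Your route is genuinely more complete than the paper's: the paper simply cites a SageMath computation, offering as an alternative only a sketch in which clicking the bottom layer sets the four top coordinates freely and ``slightly tedious algebra'' shows the kernel of that projection has all four bottom coordinates equal to a common even integer. Your reduction is that same two-layer idea made rigorous --- the freedom in the top coordinates is exactly the unimodularity of the off-diagonal $I_4$ blocks, and the kernel lattice the paper gestures at is exactly the row lattice of your Schur complement $-2J_4$ (all coordinates equal to the same even integer) --- so you have supplied precisely the omitted algebra, in a form that would moreover generalize to any graph consisting of two identically labeled layers joined by a perfect matching.
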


\begin{proof}
This is easily checked computationally with SageMath.

Alternatively, it's clear that we can set 4 coordinates (for example, all "top" vertices) to be anything we want by only clicking the bottom vertices. Then some slightly tedious algebra shows that the kernel of the projection onto those 4 coordinates has the other 4 coordinates all equal to any particular even integer.
\end{proof}

\section{Reducing nonabelian graph powers to the abelian case}
\label{nonabelian-g-gamma}

    \subsection{Definitions}
    \label{defs}

    Having seen how to determine the structure of $G^\Gamma$ when $G$ is abelian, we now turn to the case of nonabelian groups. We will exploit Proposition~\ref{prop:simpleprops}(a), which implies that there is an epimorphism from $G^\G$ to $(\Gab)^\G$ with kernel $[G,G]^{|\G|} \cap G^\G$. If we can determine $[G,G]^{|\G|} \cap G^\G$, then combined with the results from the previous section, this will give us a description of $G^\G$. We will define things in the general context of $G^S$, though the fact that the rows of $A_\G$ consist only of $0$s and $1$s is an essential component of much of our analysis.

    \begin{defi}
    If $S \subseteq \Z^n$, then the \emph{coordinate-wise commutator subgroup} $\CComm(G, S)$ is $[G,G]^{n} \cap G^S$. In other words, it is those elements of $G^S$ with an element of $[G,G]$ in each coordinate. If $\G$ is a graph, $\CComm(G, \G)$ is $[G,G]^{|\G|} \cap G^\G$.
    \end{defi}
    

    We will see that for a wide-ranging class of graphs, $[G,G]^{|\G|} \leq G^\G$ (so that $\CComm(G, \G) = [G,G]^{|\G|}$), which makes the description of $G^\G$ depend in a straightforward way on $(G^\Ab)^\G$. We thus make the following definition.

    \begin{defi}
    The graph $\Gamma$ is \emph{$G$-RA} (short for \emph{reducible to abelian}) if $[G,G]^{|\Gamma|} \le G^\G$.
    If $\Gamma$ is $G$-RA for every group $G$, then we simply say that $\Gamma$ is RA.
    \end{defi}

    \begin{Example}\label{eg:Rubik_full}
        It is well-known that the commutator subgroup of the Rubik's Cube group $G$ has index 2 ($[G, G]$ is just the subgroup of even permutations of the faces of the cubies), which implies the following.
        
        Consider again the ``Rubik's Cube game'' on $\G$, where each vertex has a Rubik's Cube attached and a move at one vertex affects the cube at that vertex and those at the adjacent vertices. Let $G$ be the Rubik's Cube group, and suppose that $\G$ is $G$-RA. 
        
        For each  Rubik's cube at every vertex, we can determine whether the starting position is in $[G, G]$ or not by determining if it is an odd or even permutation from the solved state. Then we can write down the state as a 1 if odd or 0 if even. The overall starting position of the Rubik's Cubes will be solvable if and only if the corresponding Lights Out Game on the graph has a solution (that is, if you can get back to the all-0 state). This is because a solution exists if and only if we can perform moves on the Rubik's cubes to make each state reduce to $\bar 0$ in the abelianization $G/[G, G]$ in each coordinate. We know we can act on each coordinate by any element of $[G, G]$ independently, so we can get to the solved state independently for each cube.

        As a corollary, every initial state is solvable if and only if every initial state of the Lights Out game on $\G$ is solvable.
    \end{Example}  

    In general, determining $\CComm(G,S)$ directly is difficult. We will make use of several subgroups of $[G,G]^{n}$. First, note that since $(\Gab)^S$ is abelian, that implies that $[G^S, G^S]$ is contained in $\CComm(G, S)$. Further, let us define two subgroups of $[G^S, G^S]$.

    \begin{defi}
        The \emph{basic commutator subgroup} $\Comm(G, S)$ is the subgroup of $[G^S, G^S]$ generated by $[g^{\bx}, h^{\by}]$ for any $\bx, \by \in S$ and elements $g, h \in G$. Similarly, if $\G$ is a graph, then $\Comm(G, \G)$ is the subgroup of $[G^\G, G^\G]$ generated by $[g^u, h^v]$ for any vertices $u, v \in \G$ and elements $g, h \in G$.

        The \emph{distinct basic commutator subgroup} $\Commd(G, S)$ is the subgroup of $[G^S, G^S]$ generated by $[g^\bx, h^\by]$ for any $g, h \in G$ and \emph{distinct} vectors $\bx \ne \by \in S$. Similarly, $\Commd(G, \G)$ is the subgroup of $[G^\G, G^\G]$ generated by $[g^u, h^v]$ for any $g, h \in G$ and \emph{distinct} vertices $u \ne v \in \G$. 
    \end{defi}

    Thus, in general we have
    \begin{align}
        \label{all-layers}
        \Commd(G, S) &\le \Comm(G, S) \le [G^S, G^S]\\
        \nonumber & \le \CComm(G, S) \le [G, G]^{n}
    \end{align}
    Note that $[G^S, G^S]$ is the normal closure of $\Comm(G, S)$ in $G^S$, and since $[G, G]^S$ is the subgroup of $G^S$ generated by $[g^\bx, h^\by]$, we have $\Comm(G, S) = \langle \Commd(G,S), [G, G]^S\rangle$.

    For each of the containments in \eqref{all-layers}, there are examples that show that the containment may be proper. Proposition~\ref{even-cube-RA} below shows that for even-dimensional cubes $\G = Q_d$, $\Commd(D_8, \G)$ does not equal $\Comm(D_8, \G)$. The examples in \cref{SuffRAMatrix} show $\Comm(G, S)$ may not equal $[G^S, G^S]$ and $[G^S, G^S]$ may not equal $[G, G]^{|S|} \cap G^S$. Furthermore, in a future paper we will show that the crown graph on $2p+4$ vertices (i.e. the tensor product $K_2 \times K_{p+2}$) is an example of a graph where $[G^\G, G^\G] \ne \CComm(G, \G)$ for $G = H(\F_p)$ (see \cite{FullPaper}). Finally, as we will show in Corollary~\ref{graphs-not-RA}, the cube graphs $Q_{2n+1}$ of odd dimension are not RA. 
    
    For the rest of this section, we will shift to working with $G^\G$ only. We will show that many graphs have the property that for every $G$, the group $\Commd(G, \G)$ or $\Comm(G, \G)$ is equal to all of $[G,G]^{|\G|}$. 
    In particular, when $\Commd(G,\G) = [G,G]^{|\G|}$, the groups in \eqref{all-layers} all coincide and $\G$ is RA.   
    The group $\Commd(G, \G)$ is also instrumental in analyzing $G^\G$ when $\G$ is a product of graphs (see \cite{FullPaper}), so we make the following definition.

    \begin{defi}
    The graph $\G$ is \emph{strongly RA} if, for every group $G$,
    $\Commd(G, \G) = [G,G]^{|\G|}$. In particular, if $\G$ is strongly RA, then it is RA.
    \end{defi}

    In order to understand the groups $\Commd(G, \G)$ and $\Comm(G, \G)$, let us analyze the commutators $[g^u, h^v]$.

    \begin{prop} \label{comm-char}
    Let $u, v \in \Gamma$, and let $\bx$ be the indicator vector for $B(u) \cap B(v)$. Then $[g^u, h^v] = [g,h]^\bx$.
    \end{prop}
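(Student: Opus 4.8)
The plan is to reduce everything to a coordinate-by-coordinate computation, exploiting that $G^\G$ is a subgroup of the direct product $G^n$, so that group operations — and in particular commutators — are carried out independently in each coordinate. Concretely, the $i$-th coordinate of a commutator $[g^u, h^v]$ is just the commutator of the $i$-th coordinates of $g^u$ and $h^v$.

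First I would unwind the definitions. By definition $g^u = g^{\bx}$ where $\bx$ is the row of $A_\G = \Adj(\G) + I_n$ indexed by $u$; since the graph is simple, this row has $0/1$ entries and is exactly the indicator vector for $\nbhd{u}$. Recalling that $g^{\bx} = (g^{x_1}, \ldots, g^{x_n})$, the $i$-th coordinate of $g^u$ is therefore $g$ if $v_i \in \nbhd{u}$ and is $1$ otherwise, and likewise the $i$-th coordinate of $h^v$ is $h$ if $v_i \in \nbhd{v}$ and $1$ otherwise. Next I would fix a coordinate $i$ and evaluate the $i$-th coordinate of $[g^u, h^v]$, which is $[a_i, b_i]$ with $a_i \in \{g, 1\}$ and $b_i \in \{h, 1\}$ determined by whether $v_i$ lies in $\nbhd{u}$ and in $\nbhd{v}$. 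Using that any commutator with an identity argument is trivial, $[a_i, b_i] = 1$ in every case except when $a_i = g$ and $b_i = h$ simultaneously — that is, except when $v_i \in \nbhd{u} \cap \nbhd{v}$ — in which case $[a_i, b_i] = [g,h]$.

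Collecting the coordinates, the $i$-th coordinate of $[g^u, h^v]$ equals $[g,h]$ precisely when $v_i \in \nbhd{u} \cap \nbhd{v}$ and equals $1$ otherwise, which is by definition $[g,h]^{\bx}$ for $\bx$ the indicator vector of $\nbhd{u} \cap \nbhd{v}$, as claimed. There is no genuine obstacle in this argument: the only thing requiring care is the bookkeeping of the four membership cases and the observation that the commutator survives only on the intersection of the two closed neighborhoods. That single observation — commutators collapse wherever one of the two neighborhoods is absent — is the entire content of the statement.
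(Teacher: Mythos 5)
Your proposal is correct and follows essentially the same route as the paper: the paper also argues vertex-by-vertex, checking the three membership cases (in neither closed neighborhood, in exactly one, in both) and observing that the commutator survives only on $\nbhd{u} \cap \nbhd{v}$, which is precisely your observation that $[a_i,b_i]$ is trivial unless $a_i = g$ and $b_i = h$ simultaneously. Your packaging of the case analysis via ``commutators in a direct product are computed coordinate-wise, and a commutator with an identity entry is trivial'' is a slightly cleaner formulation of the same argument, not a different one.
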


    \begin{proof}
    It suffices to consider the effect that $[g^u, h^v]$ has on three different kinds of vertices:
    \begin{enumerate}
        \item If $w$ is not adjacent to $u$ or $v$, then both $g^u$ and $h^v$ leave it unchanged.
        \item If $w$ is adjacent to $u$ but not $v$, then $w$ is affected only by $g^u$ and $(g^{-1})^u$, and the net effect is that it is unchanged. Similarly, if $w$ is adjacent to $v$ but not $u$, it ends up unchanged.
        \item If $w$ is adjacent to $u$ and $v$, then the state of $w$ is multiplied by $[g,h]$.
    \end{enumerate}
    Thus $[g^u, h^v]$ precisely gives $[g, h]$ at the vertices in $B(u) \cap B(v)$.
    \end{proof}

    Let us consider these subgroups in a simple graph and show how it can help establish that a graph is RA.

    \begin{Example} \label{eg:RA-graph}
    Let $\Gamma$ be the cycle graph $C_4$. Label the vertices cyclically $v_1, \ldots, v_4$. Then
    \[ [g^{v_1}, h^{v_{2}}] = ([g,h], [g,h], 1, 1), \]
    \[ [g^{v_1}, h^{v_3}] = (1, [g,h], 1, [g,h]). \]
    The group $\Commd(G, C_4)$ is generated by elements like these; namely, elements of $G^{C_4}$ that have the same commutator on any pair of vertices. If $G = D_8$, then there is only a single nontrivial commutator $c$, and it has order $2$. It follows that $\Commd(D_8, C_4)$ is the subgroup of $\langle c \rangle^4$ that has $c$ on an even number of coordinates, a group of order 8.

    In the group $\Comm(G, C_4)$, we are additionally allowed to use
    \[ [g^{v_1}, h^{v_1}] = ([g,h], [g,h], 1, [g,h]). \]
    Then it follows that $\Comm(G, C_4)$ contains
    \[ [g^{v_1}, h^{v_1}] [h^{v_1}, g^{v_3}] = ([g,h], 1, 1, 1). \]
    Similar arguments show that $[g,h]^{\bbe_i} \in G^{C_4}$ for all $i$. It follows that $\Comm(G, C_4) = [G,G]^{4}$ and so $C_4$ is RA (but not strongly RA). 
    \end{Example}

    In some graphs, the relationship between $[G, G]^{|\G|}$ and $G^\G$ depends on $G$:
    
    \begin{Example} \label{eg:not-RA}
    Let $\Gamma$ be the graph of a cube. Using GAP, we find that if $G$ is dihedral of order $10$, then indeed $[G, G]^8 \leq G^\G$. On the other hand, if $G$ is dihedral of order $8$, then $\CComm(G, \G)$ has index $2$ in $[G,G]^8$. See Figure~\ref{fig:Q3-RA-code} in \ref{sec:app-code}
    
    More generally, suppose that the vertex $v_1$ in the cube has neighbors $v_2$, $v_3$, and $v_4$. Then:
    \[ [h^{v_1}, g^{v_1}] \cdot [g^{v_1}, h^{v_2}] \cdot [g^{v_1}, h^{v_3}] \cdot [g^{v_1}, h^{v_4}] = ([g,h]^2)^{e_1}. \]
    Similar computations show that for any commutator $c$ and any $i = 1, 2, \dots, 8$, we have $c^{2\bbe_i} \in [G^\Gamma, G^\Gamma]$. Thus, if $[G,G]$ is generated by the squares of commutators, then $[G, G]^8$ is contained in $G^\G$. 
    \end{Example}

\subsection{Graph families that are RA}
\label{RA-families}

    For a fixed group $G$ and graph $\G$, there are computational methods to help determine whether $\G$ is $G$-RA. We defer these methods to \cref{compute-RA}. For now, our goal is to find ways to show that a graph $\G$ is RA without extensive calculation. Typically, this only involves working with elements of the form $[g^u, h^v]$ (i.e. with $\Comm(G,\G)$ or $\Commd(G,\G)$).

    The first result follows directly from Proposition~\ref{comm-char}.

    \begin{prop} \label{adj-comms}
        If $u$ and $v$ are adjacent vertices with no neighbors in common, then $[g,h]^{\bbe_u+\bbe_v} \in \Commd(G, \G)$. 
    \end{prop}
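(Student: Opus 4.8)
The plan is to read off the result as an essentially immediate corollary of \cref{comm-char}, so the entire task reduces to computing the set $B(u) \cap B(v)$ under the stated hypotheses. By \cref{comm-char}, if $\bx$ denotes the indicator vector for $B(u) \cap B(v)$, then $[g^u, h^v] = [g,h]^{\bx}$; hence it suffices to show that $B(u) \cap B(v) = \{u, v\}$, so that $\bx = \bbe_u + \bbe_v$.

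First I would argue that both $u$ and $v$ lie in the intersection. Since $u$ and $v$ are adjacent, $v$ is a neighbor of $u$ and $u$ is a neighbor of $v$; combined with the fact that each vertex lies in its own closed neighborhood, this gives $u, v \in B(u)$ and $u, v \in B(v)$, so $\{u, v\} \subseteq B(u) \cap B(v)$. Next I would rule out every other vertex: any $w \notin \{u, v\}$ lying in $B(u) \cap B(v)$ would have to be a genuine neighbor of both $u$ and $v$ (it cannot be $u$ or $v$ themselves), i.e. a common neighbor of $u$ and $v$. The hypothesis that $u$ and $v$ have no neighbors in common forbids this, so $B(u) \cap B(v) = \{u, v\}$ exactly.

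With the intersection identified, \cref{comm-char} yields $[g^u, h^v] = [g,h]^{\bbe_u + \bbe_v}$. Finally I would invoke the definition of $\Commd(G, \G)$: because $u$ and $v$ are adjacent they are in particular \emph{distinct}, so $[g^u, h^v]$ is one of the generators $[g^u, h^v]$ with $u \neq v$ that define the distinct basic commutator subgroup. Therefore $[g,h]^{\bbe_u + \bbe_v} \in \Commd(G, \G)$, as claimed.

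I do not anticipate any real obstacle here; the only point requiring care is the bookkeeping in the neighborhood computation, namely being precise that adjacency places $u$ and $v$ in each other's \emph{closed} neighborhoods while the ``no common neighbors'' condition eliminates all other vertices from the intersection. Once $B(u) \cap B(v) = \{u, v\}$ is established, the statement follows directly from \cref{comm-char} and the definition of $\Commd$.
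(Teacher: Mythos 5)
Your proof is correct and matches the paper's approach exactly: the paper simply remarks that this proposition ``follows directly from Proposition~\ref{comm-char},'' and your argument spells out precisely the intended details, namely that adjacency plus the no-common-neighbors hypothesis forces $B(u) \cap B(v) = \{u, v\}$, and that $u \neq v$ makes $[g^u, h^v]$ a generator of $\Commd(G,\G)$. Nothing is missing or superfluous.
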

    
    \begin{lemma} \label{corner-trick}
    If $u$ is a vertex of degree two, with its neighbors $v$ and $w$ not connected by an edge, then $[g, h]^{\bx} \in \Comm(G, \G)$ for $\bx = \bbe_u, \bbe_v, \bbe_w$ and any $g, h \in G$.
    \end{lemma}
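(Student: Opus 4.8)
The plan is to build $[g,h]^{\bbe_u}$, $[g,h]^{\bbe_v}$, and $[g,h]^{\bbe_w}$ out of three basic commutators all anchored at the vertex $u$, exploiting the fact that $\Comm(G,\G)$ permits the ``self-commutator'' $[g^u, h^u]$. This is precisely why the statement is about $\Comm$ and not $\Commd$: the distinct-vertex version would not give us access to $[g^u,h^u]$.

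First I would use the two hypotheses to pin down the relevant closed-neighborhood intersections. Since $u$ has degree two with neighbors exactly $v$ and $w$, we have $B(u) = \{u, v, w\}$. Because $v$ and $w$ are not adjacent, $w \notin B(v)$ and $v \notin B(w)$, so $B(u) \cap B(v) = \{u, v\}$ and $B(u) \cap B(w) = \{u, w\}$, whereas $B(u) \cap B(u) = \{u, v, w\}$. Writing $c = [g,h]$ and applying Proposition~\ref{comm-char} to each pair gives
\[ [g^u, h^u] = c^{\bbe_u + \bbe_v + \bbe_w}, \qquad [g^u, h^v] = c^{\bbe_u + \bbe_v}, \qquad [g^u, h^w] = c^{\bbe_u + \bbe_w}, \]
each of which lies in $\Comm(G,\G)$ by definition.

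The key observation is that these three elements share the \emph{same} base $c$, so by Proposition~\ref{basic-props}(a) they multiply by adding their exponent vectors, $c^{\bx} c^{\by} = c^{\bx+\by}$, and I can reason purely additively. The combination
\[ [g^u, h^v]\,[g^u, h^w]\,[g^u, h^u]^{-1} = c^{(\bbe_u+\bbe_v)+(\bbe_u+\bbe_w)-(\bbe_u+\bbe_v+\bbe_w)} = c^{\bbe_u} \]
isolates the commutator on the single coordinate $u$, so $[g,h]^{\bbe_u} \in \Comm(G, \G)$. I then recover the other two by dividing this out: $[g^u, h^v]\,(c^{\bbe_u})^{-1} = c^{\bbe_v}$ and $[g^u, h^w]\,(c^{\bbe_u})^{-1} = c^{\bbe_w}$, both again in $\Comm(G,\G)$.

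There is no serious obstacle here; the computation is elementary once the neighborhoods are set up. The only point requiring care is recognizing that the non-adjacency of $v$ and $w$ is exactly what forces the three intersections to be $\{u,v,w\}$, $\{u,v\}$, and $\{u,w\}$, so that the alternating product collapses onto one coordinate, and that the self-commutator $[g^u,h^u]$—available only in $\Comm$, not $\Commd$—is what supplies the ``$\bbe_u + \bbe_v + \bbe_w$'' term needed to cancel the doubled $\bbe_u$.
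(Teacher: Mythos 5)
Your proof is correct and is essentially the paper's own argument: both rest on the same three generators --- the adjacent-vertex commutators $[g^u,h^v]$ and $[g^u,h^w]$ together with the self-commutator $[g^u,h^u]=[g,h]^{\bbe_u+\bbe_v+\bbe_w}$ (the ingredient available in $\Comm(G,\G)$ but not $\Commd(G,\G)$) --- combined by adding exponent vectors of the common base $[g,h]$. The only difference is cosmetic ordering: the paper first isolates $[h,g]^{\bbe_w}$ and $[h,g]^{\bbe_v}$ and then combines them to get $[g,h]^{\bbe_u}$, whereas you isolate $[g,h]^{\bbe_u}$ first and divide it out of the other two.
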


    \begin{proof}
    By Proposition~\ref{adj-comms}, $[g,h]^{\bbe_u+\bbe_v} \in \Commd(G, \G)$. Multiplying by $[h,g]^{u}$ gives us $[h,g]^{\bbe_w} \in \Comm(G, \Gamma)$. By a similar argument, $[h,g]^{\bbe_v} \in \Comm(G, \Gamma)$, and multiplying $[h,g]^{\bbe_v} [h,g]^{\bbe_w}$ by $[g, h]^{u}$  implies that $[g,h]^{\bbe_u} \in \Comm(G, \Gamma)$, proving the result.
    \end{proof}
    
\begin{lemma} \label{threeinarow}
    If there exists a vertex $u \in \Gamma$ with neighbors $v$ and $w$ that have no common neighbors other than $u$, then $[g, h]^{\bbe_u} \in \Commd(G ,\G)$ for every $g, h \in G$.
\end{lemma}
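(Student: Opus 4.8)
The plan is to obtain $[g,h]^{\bbe_u}$ as a single basic commutator of clicks at two distinct vertices, namely $[g^v, h^w]$, and to pin down its value using Proposition~\ref{comm-char}. First I would invoke that proposition to write $[g^v, h^w] = [g,h]^{\bx}$, where $\bx$ is the indicator vector of $\nbhd{v} \cap \nbhd{w}$. Since $v \neq w$ are two distinct neighbors of $u$, this element already lies in $\Commd(G, \G)$ by definition, so the entire content of the lemma is to show that the intersection $\nbhd{v} \cap \nbhd{w}$ is exactly $\{u\}$.

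Next I would analyze that intersection vertex by vertex. Because $u$ is adjacent to both $v$ and $w$, it lies in $\nbhd{v} \cap \nbhd{w}$. Conversely, any vertex $z \in \nbhd{v} \cap \nbhd{w}$ is, for each of $v$ and $w$, either equal to it or adjacent to it. In the ``three in a row'' configuration $v - u - w$ the vertices $v$ and $w$ are non-adjacent, so $v \notin \nbhd{w}$ and $w \notin \nbhd{v}$; thus such a $z$ can be neither $v$ nor $w$, and must therefore be a genuine common neighbor of $v$ and $w$. The hypothesis that $v$ and $w$ share no common neighbor other than $u$ then forces $z = u$. Combining the two directions gives $\nbhd{v} \cap \nbhd{w} = \{u\}$, so $\bx = \bbe_u$ and $[g^v, h^w] = [g,h]^{\bbe_u}$, which completes the argument.

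The only real obstacle is bookkeeping about closed versus open neighborhoods: I must be careful that $v$ and $w$ themselves do not sneak into $\nbhd{v} \cap \nbhd{w}$, which is exactly where the non-adjacency of $v$ and $w$ is used. If one instead allowed $v$ and $w$ to be adjacent (so that $u, v, w$ form a triangle), then $\nbhd{v} \cap \nbhd{w}$ would equal $\{u, v, w\}$ and $[g^v, h^w]$ would give $[g,h]^{\bbe_u + \bbe_v + \bbe_w}$ instead; isolating $[g,h]^{\bbe_u}$ would then require extra commutators and detailed knowledge of the neighborhood of $u$, so I would restrict to the non-adjacent path configuration that the name of the lemma indicates.
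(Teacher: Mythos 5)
Your proposal is correct and is essentially the paper's own proof: the paper disposes of the lemma in a single line, asserting $[g^{v}, h^{w}] = [g,h]^{\bbe_u}$ via Proposition~\ref{comm-char}, which is exactly the computation you spell out. Your extra care about the non-adjacency of $v$ and $w$ (needed so that $\nbhd{v} \cap \nbhd{w} = \{u\}$ rather than $\{u,v,w\}$) is a legitimate point that the paper leaves implicit, relying on the fact that the lemma is only ever applied in triangle-free configurations.
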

\begin{proof}
    Under these assumptions, $[g^{v}, h^{w}] = [g,h]^{\bbe_u}$.
\end{proof}

Having established several `local' results, we now have the tools we need to prove some `global' results.

\begin{lemma} \label{tri-free-all-comms}
    If $\Gamma$ is a connected triangle-free graph and there exists a vertex $v$ such that $[g, h]^{\bbe_v}$ is in $ \Commd(G, \G)$ (resp. $\Comm(G, \G), [G^\G, G^\G]$) then $[G, G]^{|\G|}$ equals $\Commd(G, \G)$(resp. $\Comm(G, \G),  [G^\G, G^\G]$) . In any case, $\Gamma$ is RA. That is, if we can put any commutator in a single vertex, then we can put any commutator by itself anywhere.
\end{lemma}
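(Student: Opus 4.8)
The plan is to prove the single abstract statement underlying all three cases. Write $L$ for whichever of the subgroups $\Commd(G, \G)$, $\Comm(G, \G)$, $[G^\G, G^\G]$ appears in the hypothesis, and note that the three cases can be treated uniformly: by the chain \eqref{all-layers} we always have $\Commd(G, \G) \le L$, so anything we can already place in $\Commd(G, \G)$ is automatically available inside $L$. The goal is to upgrade the hypothesis ``$[g,h]^{\bbe_v} \in L$ for every $g,h$'' to ``$[g,h]^{\bbe_u} \in L$ for \emph{every} vertex $u$ and every $g,h$.''

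The engine is a one-step propagation along edges, and this is exactly where triangle-freeness enters. First I would observe that if $v$ and $w$ are adjacent in a triangle-free graph then they share no neighbor, so $\nbhd{v} \cap \nbhd{w} = \{v, w\}$; hence Proposition~\ref{comm-char} (equivalently Proposition~\ref{adj-comms}) gives $[g,h]^{\bbe_v + \bbe_w} \in \Commd(G, \G) \le L$. Combining this with the hypothesis $[g,h]^{\bbe_v} \in L$ and closing under inverses, the product $[g,h]^{\bbe_v + \bbe_w} \cdot ([g,h]^{\bbe_v})^{-1}$ cancels the $v$-coordinate and leaves exactly $[g,h]^{\bbe_w}$, so $[g,h]^{\bbe_w} \in L$ for all $g,h$. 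Since $\Gamma$ is connected, I then induct along a path from $v$ to an arbitrary vertex $u$ to conclude $[g,h]^{\bbe_u} \in L$ for every $u$ and every $g,h$.

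With a single commutator placed at each vertex in isolation, closing up the group is routine. Every $c \in [G,G]$ is a product of commutators $c = [g_1, h_1] \cdots [g_m, h_m]$, and since the factors $[g_i, h_i]^{\bbe_u}$ all act in the same coordinate, their product equals $c^{\bbe_u}$, which therefore lies in $L$. An arbitrary element of $[G,G]^{|\G|}$ is $\prod_u c_u^{\bbe_u}$ with each $c_u \in [G,G]$, so $[G,G]^{|\G|} \le L$. The reverse inclusion is immediate from \eqref{all-layers}, since $L \le \CComm(G, \G) = [G,G]^{|\G|} \cap G^\G \le [G,G]^{|\G|}$; hence $L = [G,G]^{|\G|}$. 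Finally, because $L \le G^\G$, this yields $[G,G]^{|\G|} \le G^\G$, i.e. $\Gamma$ is $G$-RA (and RA once the hypothesis is known for every $G$, which is how the local lemmas will supply it).

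The step I expect to be the crux is the uniform propagation across all three subgroups. The point that makes it go through is that the extra ingredient $[g,h]^{\bbe_v + \bbe_w}$ always sits in the \emph{smallest} member $\Commd(G, \G)$ of the chain \eqref{all-layers}, so multiplying by it never escapes $L$, no matter which of the three $L$ denotes; the triangle-free hypothesis is precisely what guarantees this ingredient is a two-coordinate (rather than larger-support) commutator. The only actual computation is the coordinate bookkeeping showing the product isolates the $w$-coordinate, and it is short.
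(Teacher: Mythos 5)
Your proposal is correct and follows essentially the same route as the paper's proof: propagate the single-vertex commutator along edges using Proposition~\ref{adj-comms} (triangle-freeness guaranteeing $\nbhd{v} \cap \nbhd{w} = \{v,w\}$ for adjacent $v,w$), induct via connectedness, and observe that the multipliers lie in $\Commd(G,\G)$ so the argument works uniformly for all three subgroups in \eqref{all-layers}. Your write-up is in fact slightly more complete than the paper's, which leaves implicit the final assembly of arbitrary elements of $[G,G]^{|\G|}$ from the single-coordinate commutators.
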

\begin{proof}
    Suppose $[g,h]^{\bbe_v} \in G^{\Gamma}$. The fact that the graph is triangle-free implies than any neighbor $w$ of $v$ shares no neighbors with $v$, so by Proposition~\ref{adj-comms}, we can multiply by $[h, g]^{\bbe_v + \bbe_w}$, and so $[h, g]^{\bbe_w} \in G^\Gamma$ and thus $[g,h]^{\bbe_w} \in G^\Gamma$. Similarly, we can extend the reasoning to neighbors of neighbors and so on, and since $\G$ is connected, it follows by induction that $[g,h]^{\bbe_u} \in G^\G$ for every vertex $u$. Furthermore, since we obtained $[g,h]^{\bbe_u}$ by multiplying $[g,h]^{\bbe_v}$ by elements of $\Commd(G,\G)$, it follows that $[g,h]^{\bbe_u}$ is in $\Commd(G,\G)$ (resp. $\Comm(G,\G)$) if $[g,h]^{\bbe_v}$ is in $\Commd(G,\G)$ (resp. $\Comm(G,\G)$). 
\end{proof}

Recall that the \emph{girth} of a graph is the length of its smallest cycle.

\begin{coro} \label{cor:girth-5}
    If $\Gamma$ is a connected graph with girth $5$ or more and at least $3$ vertices, then $\Gamma$ is strongly RA. 
\end{coro}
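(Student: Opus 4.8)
The plan is to combine the two local lemmas, Lemma~\ref{threeinarow} and Lemma~\ref{tri-free-all-comms}, noticing that the girth hypothesis is exactly what is needed to feed the first into the second. The key observation is that girth $5$ or more does double duty: it forbids triangles, so that $\G$ is triangle-free as required by Lemma~\ref{tri-free-all-comms}, and it forbids $4$-cycles, which is precisely what lets us verify the hypothesis of Lemma~\ref{threeinarow}.

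First I would locate a suitable vertex. Since $\G$ is connected with at least $3$ vertices, it is neither a single vertex nor a single edge, so some vertex $u$ has two distinct neighbors $v$ and $w$. I then claim that $v$ and $w$ have no common neighbor other than $u$. Indeed, if $z \neq u$ were such a common neighbor, then $z \neq v$ and $z \neq w$ (there are no loops), so $u, v, z, w$ are four distinct vertices and $u - v - z - w - u$ is a $4$-cycle, contradicting girth $\ge 5$. Hence the hypothesis of Lemma~\ref{threeinarow} holds for $u$, and we conclude that $[g,h]^{\bbe_u} \in \Commd(G, \G)$ for every $g, h \in G$.

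Finally, since girth $\ge 5 > 3$ forces $\G$ to be triangle-free (and it is connected by hypothesis), and we have produced a vertex $u$ with $[g,h]^{\bbe_u} \in \Commd(G, \G)$ for all $g,h$, the $\Commd$ form of Lemma~\ref{tri-free-all-comms} yields $\Commd(G, \G) = [G,G]^{|\G|}$. As this holds for every group $G$, the graph $\G$ is strongly RA.

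There is essentially no hard step here: the entire content is the translation of the single combinatorial girth condition into the two separate hypotheses the lemmas require. The only points demanding care are the degenerate-vertex check at the outset (ensuring a vertex of degree $\ge 2$ exists, which uses connectivity together with $|\G| \ge 3$) and the verification that the four vertices in the putative $4$-cycle are genuinely distinct, so that the contradiction with girth $\ge 5$ is legitimate.
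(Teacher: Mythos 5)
Your proposal is correct and follows essentially the same route as the paper: find a vertex $u$ of degree at least $2$, use girth $\ge 5$ to verify the hypothesis of Lemma~\ref{threeinarow} (no common neighbors of $v$ and $w$ other than $u$), and then invoke the $\Commd$ version of Lemma~\ref{tri-free-all-comms}. Your explicit verification that a putative second common neighbor would yield a genuine $4$-cycle is a slightly more careful spelling-out of the step the paper asserts directly.
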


\begin{proof}
    Let $\Gamma$ be a graph satisfying the hypotheses. Then there is some vertex $u$ with degree 2 or more. Consider neighbors $v$ and $w$ of $u$. Since $\Gamma$ has girth at least 5, $v$ and $w$ are not adjacent, and they have no neighbors in common other than $u$. Thus $[g,h]^{\bbe_u}$ is in $\Commd(G, \Gamma)$ by Lemma~\ref{threeinarow}, and Lemma~\ref{tri-free-all-comms} completes the proof.
\end{proof}

Recall for a moment that we restrict ourselves to connected graphs that are neighborhood-distinguishable; that is, no vertices $u$ and $v$ have the same closed neighborhood. Note that if $|\G| \geq 3$ and $u$ and $v$ have the same closed neighborhood, then they have a common neighbor $w$ and this gives us a triangle in our graph. Thus, as long as $\G$ is triangle-free, we do not have to worry about checking that the graph is neighborhood-distinguishable. 

Let us now use Corollary~\ref{cor:girth-5} and highlight a few graphs of interest.

\begin{coro} \label{girth-5-ra-ex}
    The following graphs are all strongly RA:
    \begin{enumerate}
        \item The Petersen graph
        \item $C_n$ with $n \geq 5$
        \item Trees with at least $3$ vertices.
    \end{enumerate}
\end{coro}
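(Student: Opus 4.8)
The plan is simply to verify that each of the three families satisfies the three hypotheses of Corollary~\ref{cor:girth-5}: connectedness, girth at least $5$, and at least $3$ vertices. Once this is checked for each example, strong RA is an immediate consequence, since all the work has already been done in Corollary~\ref{cor:girth-5} (which in turn rests on Lemmas~\ref{threeinarow} and \ref{tri-free-all-comms}). So this is a verification, not a construction.

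First I would handle the cycles $C_n$ with $n \geq 5$. These are connected and have $n \geq 5 \geq 3$ vertices, and the only cycle contained in $C_n$ is the full Hamiltonian cycle of length $n$; hence the girth is exactly $n \geq 5$, and Corollary~\ref{cor:girth-5} applies directly. Next I would treat trees on at least $3$ vertices. A tree is by definition connected and acyclic, so it contains no cycle whatsoever; its girth is therefore infinite and vacuously at least $5$. Since the hypothesis of having at least $3$ vertices is assumed outright, Corollary~\ref{cor:girth-5} again applies.

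Finally, for the Petersen graph $\G$ I would invoke its standard combinatorial properties: it is connected (indeed vertex-transitive) on $10$ vertices, and it is the classical example of a graph that is both triangle-free and $4$-cycle-free, with girth exactly $5$. Thus all three hypotheses hold and the conclusion follows.

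There is no genuine obstacle here, as the entire content of the statement is absorbed into Corollary~\ref{cor:girth-5}; the only thing that requires any attention is confirming the girth of each example. The least mechanical of these is the girth of the Petersen graph, but this is a well-known fact that may be cited rather than rederived. If one wished to be self-contained, one could note that any triangle or $4$-cycle in the Petersen graph would force two of its $5$-element ``point'' or ``line'' labels (in the Kneser $K(5,2)$ description) to share too many elements, which its construction forbids; but for the purposes of this corollary the classical value of the girth suffices.
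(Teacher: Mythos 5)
Your proposal is correct and matches the paper exactly: the paper states this corollary without any written proof, treating it as an immediate application of Corollary~\ref{cor:girth-5}, and your verification of the hypotheses (connectedness, at least $3$ vertices, girth at least $5$ --- including the vacuous case of acyclic trees and the well-known girth $5$ of the Petersen graph) is precisely the intended argument.
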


Having fully determined $\CComm(G, \G)$ for graphs $\G$ of girth at least 5, we next consider graphs of girth 4. In light of Lemma~\ref{tri-free-all-comms}, it seems likely that many graphs of girth 4 will be RA, but Example~\ref{eg:not-RA} proves that not every graph of girth 4 is RA. We will later describe a family of graphs of girth 4 that are not RA. For now, let us use Lemma~\ref{tri-free-all-comms} to help describe what a non-RA graph of girth 4 would have to look like.

\begin{defi}
\label{def:prop-S}
    We say that the graph $\G$ satisfies the \emph{square completion property} if every path with three vertices can be completed to a 4-cycle. 
\end{defi}

\begin{prop} \label{girth-4}
    Suppose that $\Gamma$ is a connected graph with girth $4$. If $\G$ has at least one vertex of degree $2$, then $\G$ is RA. If $\G$ does not satisfy the square completion property, or if  $\Gamma$ has at least one vertex of degree $1$, then $\Gamma$ is strongly RA.
\end{prop}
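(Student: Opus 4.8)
The plan is to reduce each of the three hypotheses to one of the two local lemmas (Lemma~\ref{corner-trick} and Lemma~\ref{threeinarow}), in each case producing a single vertex at which an arbitrary commutator can be realized by itself, and then to invoke the global Lemma~\ref{tri-free-all-comms}. The key preliminary observation is that girth $4$ forces $\Gamma$ to be triangle-free, so the triangle-free hypotheses appearing in those lemmas are automatically satisfied and all three conclusions of Lemma~\ref{tri-free-all-comms} are available. The split between the ``RA'' conclusion and the stronger ``strongly RA'' conclusion will come entirely from whether the isolated commutator lands in $\Comm(G,\G)$ or in the smaller $\Commd(G,\G)$.

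For the degree-$2$ case I would take a vertex $u$ of degree $2$ with neighbors $v$ and $w$. Triangle-freeness guarantees that $v$ and $w$ are non-adjacent, so Lemma~\ref{corner-trick} applies and places $[g,h]^{\bbe_u}$ in $\Comm(G,\G)$ for all $g,h$. Feeding this into Lemma~\ref{tri-free-all-comms} yields $\Comm(G,\G)=[G,G]^{|\G|}$, so $\G$ is RA. Because Lemma~\ref{corner-trick} relies on the repeated-vertex generator $[g^u,h^u]$, which lies in $\Comm$ but not in general in $\Commd$, I expect only RA here and not strongly RA, which matches the statement.

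For the failure of the square completion property I would simply unpack the negation of the definition: it furnishes a three-vertex path $v-u-w$ (so $v$ and $w$ are both neighbors of $u$) that cannot be closed into a $4$-cycle, i.e. $v$ and $w$ have no common neighbor other than $u$. This is precisely the hypothesis of Lemma~\ref{threeinarow}, which then places $[g,h]^{\bbe_u}$ in the smaller group $\Commd(G,\G)$; Lemma~\ref{tri-free-all-comms} upgrades this to $\Commd(G,\G)=[G,G]^{|\G|}$, so $\G$ is strongly RA. The degree-$1$ case I would treat as a special instance of this same argument: if $u$ is a leaf with unique neighbor $v$, then connectivity together with the presence of a $4$-cycle (which rules out $\G$ being a single edge) forces $v$ to have a further neighbor $w\neq u$, and the path $u-v-w$ cannot complete to a $4$-cycle since the only neighbor of $u$ is $v\neq w$. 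Thus Lemma~\ref{threeinarow} applies at the vertex $v$ and again gives strongly RA.

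The substance is almost entirely bookkeeping, so there is no single hard computational step; the work is in matching each combinatorial hypothesis to the correct local lemma and tracking which commutator subgroup results. The point requiring the most care is the exact translation between the graph-theoretic conditions and the algebraic outputs --- in particular, verifying that ``a three-vertex path cannot be completed to a $4$-cycle'' is literally the ``no common neighbor other than $u$'' hypothesis of Lemma~\ref{threeinarow}, and observing that a leaf automatically produces such a non-completable path. I would also double-check the connectivity-plus-girth argument that a leaf's neighbor has degree at least $2$, since that is what rules out the degenerate single-edge component and supplies the third vertex $w$ needed to apply the lemma.
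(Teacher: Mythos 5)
Your proposal is correct and takes essentially the same route as the paper's proof: the degree-$2$ case feeds Lemma~\ref{corner-trick} into Lemma~\ref{tri-free-all-comms} (yielding only $\Comm(G,\G)=[G,G]^{|\G|}$, hence RA), while the failure of square completion and the degree-$1$ case feed Lemma~\ref{threeinarow} into Lemma~\ref{tri-free-all-comms} (yielding $\Commd(G,\G)=[G,G]^{|\G|}$, hence strongly RA), with the paper likewise handling the leaf case by observing it violates square completion. Your added details---that girth $4$ supplies the non-adjacency hypothesis of Lemma~\ref{corner-trick}, and that connectivity plus the existence of a $4$-cycle gives the leaf's neighbor a second neighbor $w$---are correct fillings-in of steps the paper leaves implicit.
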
 

\begin{proof}
    If $\G$ has a vertex of degree $2$, then Lemma~\ref{corner-trick} and Lemma~\ref{tri-free-all-comms} imply that $\Comm(G, \G) = [G, G]^{|\G|}$, and so $\G$ is RA.

    If $\Gamma$ does not satisfy the square completion property, then there is a path with three vertices $u, v, w$ that cannot be completed to a $4$-cycle, which implies that $v$ is the only common neighbor of $u$ and $w$. Then $\Commd(G, \G) = [G, G]^{|\G|}$ by Lemma~\ref{threeinarow} and Lemma~\ref{tri-free-all-comms}, so $\G$ is strongly RA. In particular, if $\G$ has a vertex of degree 1, then it cannot satisfy the square completion property and thus is strongly RA.
\end{proof}

\begin{Example} \label{prop-s-example}
    Cube graphs $Q_d$ with $d \geq 2$ have the square completion property, as do complete bipartite graphs $K_{m,n}$. An $m \times k$ grid graph with $m \geq 2$ and $k \geq 3$ does not. 
\end{Example}

Let us now examine some consequences of Proposition~\ref{girth-4} for some families of graphs of girth 4.

\begin{theorem}
\label{thm:RA-families}
\begin{enumerate}[(a)]
    \item $C_4$ is RA but not strongly RA. 
    \item An $m \times k$ grid graph with $m \geq 2$ and $k \geq 3$ is strongly RA.
    \item The complete bipartite graph $K_{m,n}$ is RA, and is strongly RA if and only if $m$ and $n$ are relatively prime. 
\end{enumerate}
\end{theorem}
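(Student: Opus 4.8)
Parts (a) and (b) fall out of the machinery already assembled. For (a), note $C_4 = K_{2,2}$ has girth $4$ with every vertex of degree $2$, so Proposition~\ref{girth-4} gives that it is RA (this was also done by hand in Example~\ref{eg:RA-graph}); the failure of strong RA is precisely the $D_8$ computation in Example~\ref{eg:RA-graph}, where $\Commd(D_8, C_4)$ has order $8$ while $[D_8,D_8]^4$ has order $16$. For (b), a grid graph is bipartite, hence triangle-free of girth $4$, and by Example~\ref{prop-s-example} an $m \times k$ grid with $m \ge 2$, $k \ge 3$ fails the square completion property; Proposition~\ref{girth-4} then yields strong RA.

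The content is in (c). Write the parts as $U = \{u_1, \dots, u_m\}$ and $W = \{w_1, \dots, w_n\}$; the graph is connected, bipartite, hence triangle-free. I first handle RA. By Proposition~\ref{comm-char}, one computes $\nbhd{u_i} \cap \nbhd{u_{i'}} = W$, $\nbhd{w_j} \cap \nbhd{w_{j'}} = U$, and $\nbhd{u_i} \cap \nbhd{w_j} = \{u_i, w_j\}$. Assuming $m \ge 2$ (if $\min(m,n)=1$ with at least three vertices the graph is a star, covered by Corollary~\ref{girth-5-ra-ex}, consistent with $\gcd(1,n)=1$), I form the product $[g^{u_1}, h^{u_1}] \cdot [h^{u_1}, g^{u_2}]$: the first factor is $[g,h]$ on $u_1$ and on all of $W$, the second is $[h,g]$ on all of $W$, so the product is $[g,h]^{\bbe_{u_1}}$. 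This lands in $\Comm(G, K_{m,n})$, and Lemma~\ref{tri-free-all-comms} upgrades it to $\Comm(G, K_{m,n}) = [G,G]^{m+n}$, proving RA for every $G$ and every $m,n \ge 2$.

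For the strong-RA characterization I treat $\Commd$ as a lattice problem. By Proposition~\ref{comm-char}, the generators of $\Commd(G, K_{m,n})$ (distinct vertices only) are $c^{\mathbf{u}}$, $c^{\mathbf{w}}$, and $c^{\bbe_{u_i} + \bbe_{w_j}}$, where $\mathbf{u}, \mathbf{w}$ are the indicator vectors of $U, W$ and $c$ ranges over commutators (the first two needing $m, n \ge 2$). Let $L \le \Z^{m+n}$ be the lattice they span. Collapsing all $\bbe_{u_i}$ to one class $\bar U$ and all $\bbe_{w_j}$ to $\bar W$ via the vectors $\bbe_{u_i}+\bbe_{w_j}$, then imposing $\bar U + \bar W = 0$, $m\bar U = 0$, $n\bar W = 0$, presents $\Z^{m+n}/L \cong \Z/\gcd(m,n)\Z$, so the index is $\gcd(m,n)$ (as a check, $m=n=2$ gives index $2$, matching (a)). The reverse direction of the iff is then immediate: if $d = \gcd(m,n) > 1$, pick a prime $p \mid d$ and take $G = H(\F_p)$, whose commutator subgroup is the central $\Z/p\Z$; then $[G,G]^{m+n} = (\Z/p\Z)^{m+n}$ and $\Commd(G, K_{m,n})$ is exactly the reduction $L \bmod p$, a proper subspace since $p \mid [\Z^{m+n}:L]$, so $K_{m,n}$ is not strongly RA.

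For the forward direction, if $\gcd(m,n)=1$ then $\bbe_{u_1} \in L$: one has $\sum_j(\bbe_{u_1}+\bbe_{w_j}) - \mathbf{w} = n\bbe_{u_1}$ and a symmetric combination giving $m\bbe_{u_1}$, which Bézout combines to $\bbe_{u_1}$. Translating back, the same steps realize $(c^n)^{\bbe_{u_1}}$ and $(c^m)^{\bbe_{u_1}}$ in $\Commd(G, K_{m,n})$ for any commutator $c$, and since these are powers of one $c$ in a single coordinate they commute and multiply to $c^{\bbe_{u_1}}$; Lemma~\ref{tri-free-all-comms} plus the disjoint-support product argument then give $\Commd(G, K_{m,n}) = [G,G]^{m+n}$, i.e.\ strong RA. The main obstacle I anticipate is exactly this forward direction: the identity $\bbe_{u_1}\in L$ is over $\Z$, but $[G,G]$ need not be abelian, so I must check that every step is realized by genuine products of \emph{distinct}-vertex commutators and that the only multiplications performed are among powers of one fixed commutator sitting in a single coordinate. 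The reverse direction is clean precisely because the Heisenberg commutator subgroup is central, so all generating commutators lie in the abelian $(\Z/p\Z)^{m+n}$ and the subgroup they generate is literally the $\Z/p$-span of the generator vectors.
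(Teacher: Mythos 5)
Your proposal is correct, and on the one substantive point --- the strong-RA characterization in (c) --- it takes a genuinely different route from the paper. Parts (a), (b), and the RA half of (c) coincide with the paper's proof: the paper cites Example~\ref{eg:RA-graph} for (a), applies Proposition~\ref{girth-4} with Example~\ref{prop-s-example} for (b), and uses the same one-line identity (in the form $[h^{u},g^{v}][g^{u},h^{u}]=[g,h]^{\bbe_u}$ for $u,v$ on the same side) followed by Lemma~\ref{tri-free-all-comms} for RA. For the iff in (c) the paper argues by hand: when $\gcd(m,n)=1$ it runs the Euclidean algorithm directly on the graph, repeatedly covering one side with a commutator $c$ and cancelling along $m$ disjoint cross pairs until $c$ sits on a single vertex; when $\gcd(m,n)=k>1$ it takes $G=D_8$ and claims the generating moves preserve a mod-$k$ congruence between the numbers of $A$- and $B$-vertices in state $[x,y]$. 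You instead linearize: the generators of $\Commd(G,K_{m,n})$ have supports $\mathbf u$, $\mathbf w$, $\bbe_{u_i}+\bbe_{w_j}$, the lattice $L$ they span has $\Z^{m+n}/L\cong\Z/\gcd(m,n)\Z$, and you convert $\bbe_{u_1}\in L$ (via B\'ezout, when $\gcd=1$) into genuine products of distinct-vertex commutators --- your worry about nonabelian $[G,G]$ is correctly dispatched, since each coordinate only ever sees powers of the single commutator $c$, and Lemma~\ref{tri-free-all-comms} finishes; the only elision is that your ``symmetric combination'' yields $m\bbe_{w_1}$, not $m\bbe_{u_1}$, so one further step $m(\bbe_{u_1}+\bbe_{w_1})-m\bbe_{w_1}$ is needed, which is realizable in the same way. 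Your route buys two things. First, it is uniform with the philosophy of Section~\ref{SuffRAMatrix} and yields the refined invariant $\Z^{m+n}/L\cong\Z/\gcd(m,n)\Z$ rather than a bare yes/no. Second, and more importantly, your Heisenberg witness $H(\F_p)$ for $p\mid\gcd(m,n)$ is more robust than the paper's $D_8$: since $[x,y]$ has order $2$, covering all of side $A$ sends the count of $A$-vertices in state $[x,y]$ from $a$ to $m-a\equiv -a\pmod k$, so the paper's claimed mod-$k$ invariant is only actually preserved when $k$ is even; for odd common factors (e.g.\ $K_{3,3}$, where $3\bbe_{u_1}\equiv\bbe_{u_1}\pmod 2$ shows $\Commd(D_8,K_{3,3})=[D_8,D_8]^6$) $D_8$ is not a witness at all, and one needs precisely your $H(\F_p)$, whose central commutator subgroup makes $\Commd$ literally the reduction $L\bmod p$. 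So your argument not only differs but is sound in a case where the paper's stated argument has a gap.
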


\begin{proof}
(a) This follows from Example~\ref{eg:RA-graph}.

(b) This follows from Proposition~\ref{girth-4} and Example~\ref{prop-s-example}.


(c) If $u$ and $v$ are vertices in the same set of the bipartition of $K_{m,n}$, then
\[ [h^{u}, g^{v}] [g^{u}, h^{u}] = [g,h]^{\bbe_u},\]
and then Lemma~\ref{tri-free-all-comms} shows that $K_{m,n}$ is RA. Note that since $K_{m,n}$ satisfies the square completion property, Proposition~\ref{girth-4} does not apply. Now, to show that $K_{m,n}$ is strongly RA, we have to only consider the effect of commutators $[g^u, h^v]$ with $u \neq v$. Let us assume without loss of generality that $m \leq n$. Let the bipartition of $K_{m,n}$ consist of sets $A$ and $B$ with $|A| = m$ and $|B| = n$. Then if $u$ and $v$ are distinct vertices of $A$ (resp. $B$), the element $[g^u, h^v]$ consists of $[g,h]$ on every element of $A$ (resp. $B$). If $u \in A$ and $v \in B$, then $[g^u, h^v]$ consists of $[g,h]$ on $u$ and $v$ only.

Suppose that $m$ and $n$ have a nontrivial common factor $k$, and let $G = D_8 = \langle x, y \mid x^2 = y^2 = (xy)^4 = 1 \rangle$.
Then the only nontrivial commutator is $[x,y]$. Since $[x,y]$ has order 2, we find that any of the generating moves preserve the relation that the number of vertices of $A$ that has state $[x,y]$ is congruent modulo $k$ to the number of vertices of $B$ with state $[x,y]$. In particular, we cannot have $[x,y]$ on just a single vertex. This shows that $K_{m,n}$ is not strongly RA in this case.

Now suppose that $m$ and $n$ are relatively prime, so that $m < n$. Fix elements $x,y \in G$ and let $c = [x,y]$. Then we can put $c$ on $n-m$ vertices of $B$ by first putting it on every vertex, (using $[x^u, y^v]$ with a $u$ and $v$ in $A$, and the repeating for a $u$ and $v$ in $B$), and then picking $m$ disjoint pairs $(a,b)$ with $a \in A, b \in B$ to multiply by $c^{-1}$. If $n-m$ is still greater than $m$, we can repeat this process, putting $c$ on every vertex of $A$ and picking $m$ more disjoint pairs to end up with $c$ on $n-2m$ elements of $B$. Continuing in this way, we eventually get $c$ on $n \mod m$ elements of $B$. Now we repeat the process with the roles of $A$ and $B$ switched. We are essentially just executing the Euclidean algorithm for finding $\gcd(m,n)$, and since $\gcd(m,n) = 1$, this will end with $c$ on a single vertex. The result then follows from Lemma~\ref{tri-free-all-comms}.
\end{proof}

We have seen that the cube $Q_2 = C_4$ is RA but not strongly RA. Furthermore, $Q_3$ is not even RA. Propositions~\ref{girth-4} and \ref{prop-s-example} suggest that cubes are worth further study as possible examples of graphs that are not RA. For now, let us generalize Theorem~\ref{thm:RA-families}(a).

    \begin{prop} \label{even-cube-RA}
    The cube graph $\G = Q_d$ is never strongly RA, but it is nonetheless RA if $d$ is even.
    \end{prop}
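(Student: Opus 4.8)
The plan is to prove the two halves separately, and in both the engine is Proposition~\ref{comm-char}: for vertices $u,v$ one has $[g^u,h^v]=[g,h]^{\bx}$ with $\bx$ the indicator vector of $\nbhd u\cap\nbhd v$. So the first thing I would record is a combinatorial fact about the hypercube: for \emph{distinct} $u,v$ in $Q_d$ the set $\nbhd u\cap\nbhd v$ always has \emph{even} cardinality. Indeed, $Q_d$ is bipartite, hence triangle-free, so adjacent vertices share no further common neighbour and $\nbhd u\cap\nbhd v=\{u,v\}$ (size $2$); vertices at distance $2$ are non-adjacent with exactly two common neighbours (size $2$); and vertices at distance $\ge 3$ give the empty set. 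Consequently every generator $[g^u,h^v]$ of $\Commd(G,Q_d)$ (with $u\ne v$) equals $[g,h]^{\bx}$ for an even-weight $\bx$. Taking $G=D_8$, whose commutator subgroup is $\{1,c\}$ with $c$ an involution, every element of $\Commd(D_8,Q_d)\le\{1,c\}^{2^d}\cong(\Z_2)^{2^d}=[G,G]^{2^d}$ is therefore an even-weight vector; since $\bbe_w$ is odd-weight, $\Commd(D_8,Q_d)$ is a proper subgroup. This proves $Q_d$ is \emph{never} strongly RA, for every $d$.

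For the second half I would aim to show $c^{\bbe_u}\in\Comm(G,Q_d)$ for every commutator $c=[g,h]$ and every vertex $u$; since every element of $[G,G]$ is a product of commutators and distinct coordinates are independent, this yields $[G,G]^{2^d}\le\Comm(G,Q_d)\le G^{Q_d}$, i.e.\ $Q_d$ is RA. The two local inputs are: (I) for adjacent $u,v$, Proposition~\ref{comm-char} gives $[g^u,h^v]=c^{\bbe_u+\bbe_v}$ (triangle-freeness); and (II) for $u,v$ at distance exactly $2$, letting $a,b$ be their two common neighbours --- which are themselves at distance $2$ with $\nbhd a\cap\nbhd b=\{u,v\}$ --- Proposition~\ref{comm-char} gives $[g^a,h^b]=c^{\bbe_u+\bbe_v}$. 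Thus $c^{\bbe_u+\bbe_v}\in\Commd(G,Q_d)$ whenever $u,v$ lie at distance $1$ or $2$.

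The crux is to combine these weight-$2$ elements with a single odd-weight element. Given a vertex $u$ with two neighbours $v,w$ (at distance $2$ from each other), facts (I) and (II) give, computing coordinatewise,
\[ c^{\bbe_u+\bbe_v}\,c^{\bbe_u+\bbe_w}\,\bigl(c^{\bbe_v+\bbe_w}\bigr)^{-1}=(c^{2})^{\bbe_u}\in\Commd(G,Q_d). \]
Separately, the same-vertex commutator gives $[g^u,h^u]=c^{\nbhd u}$ (weight $d+1$); multiplying by $(c^{-1})^{\bbe_u+\bbe_{v_j}}$ over the $d$ neighbours $v_j$ of $u$ cancels every neighbour and leaves $(c^{1-d})^{\bbe_u}\in\Comm(G,Q_d)$. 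Here is where parity enters: when $d$ is even, $1-d$ is odd and $d/2\in\Z$, so
\[ c^{\bbe_u}=(c^{1-d})^{\bbe_u}\,\bigl((c^{2})^{\bbe_u}\bigr)^{d/2}\in\Comm(G,Q_d), \]
as desired.

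I expect fact (II) to be the main obstacle. Using only (I) and the odd-weight $c^{\nbhd u}$, everything lands in the sublattice where the functional $f=\sum_{\text{even part}}-\sum_{\text{odd part}}$ takes values in $(d-1)\Z$, which for $d\ge 4$ does not reach $\pm1$; the distance-$2$ commutators are exactly what supply a generator with $f=\pm2$, and since $\gcd(2,d-1)=1$ for even $d$ the obstruction disappears. Verifying (II) --- that distance-$2$ pairs realize weight-$2$ vectors supported inside a single part of the bipartition --- is the one place the hypercube's geometry is genuinely needed; everything else is bookkeeping. (Note the dichotomy is sharp: for odd $d$ the closed neighbourhood has \emph{even} size $d+1$, so even $\Comm$ stays within the even-weight sublattice, consistent with $Q_{2n+1}$ failing to be RA, as shown later.)
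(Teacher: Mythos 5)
Your proof is correct, and the first half is essentially the paper's own argument: with $G=D_8$ and $c$ the unique nontrivial commutator, every generator $[x^u,y^v]$ of $\Commd(D_8,Q_d)$ with $u\neq v$ multiplies either $\{u,v\}$ or their two common neighbours by $c$, so the number of coordinates carrying $c$ stays even and $c^{\bbe_w}$ is unreachable. For the even-$d$ half your route genuinely differs in its combinatorial bookkeeping. The paper fixes a single base vertex $v_0=0\cdots 0$, pairs its $d$ neighbours into $d/2$ pairs $\{v_{2k-1},v_{2k}\}$ (here is where evenness of $d$ enters for them), realizes each pair as the distance-2 commutator $[g^{v_0},h^{u_k}]$ with $u_k$ the second common neighbour, and multiplies by $[h^{v_0},g^{v_0}]$ so that everything on $B(v_0)$ cancels except a single $[h,g]$ at $v_0$; it then invokes Lemma~\ref{tri-free-all-comms} to propagate the single-vertex commutator through the connected triangle-free graph. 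You instead work locally at an arbitrary vertex $u$: you build $(c^2)^{\bbe_u}$ from three weight-two generators (two adjacent-pair commutators and one distance-2 commutator, your fact (II), which is valid since distance-2 pairs in $Q_d$ have exactly two common neighbours and the four vertices form a 4-cycle) and $(c^{1-d})^{\bbe_u}$ from the same-vertex commutator, then combine them via $\gcd(2,d-1)=1$ — so evenness of $d$ enters through the gcd rather than through a pairing, and you never need the propagation lemma at all. Both proofs rest on the same geometric input (Proposition~\ref{comm-char} plus the two-common-neighbours property of $Q_d$); the paper's version is shorter and leans on existing machinery, while yours isolates exactly where parity is used and makes the odd-$d$ obstruction transparent (every generator of $\Comm(G,Q_d)$ then has even weight), which is consistent with, though of course does not by itself prove, the later result that $Q_{2n+1}$ is not RA.
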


    \begin{proof}
    To show that $Q_d$ is not strongly RA, consider $G = D_8 = \langle x, y \rangle$, and let $c = [x,y]$ be the single nontrivial commutator. If $u \neq v$, then $[x^u, y^v]$ is either trivial, or it multiplies two coordinates by $c$ (either corresponding to $u$ and $v$, or to their two common neighbors). Since $c$ has order 2, the sum of powers of $c$ over all the vertices always remains even a click with a generator of $\Commd(D_8, Q_d)$, so we cannot put $c$ on a single vertex alone, and thus $Q_d$ is not strongly RA.
    
    Now let $d$ be even, let $v_0 = 00\cdots0$, and for $1 \leq i \leq d$, let $v_i$ be the neighbor of $v_0$ with a $1$ in the $i$th coordinate. Then for each $1 \leq k \leq d/2$, $v_{2k-1}$ and $v_{2k}$ have two common neighbors; $v_0$ and another vertex $u_k$. Then $[g^{v_0}, h^{u_k}]$ consists of $[g,h]$ only at $v_{2k-1}$ and $v_{2k}$. Thus,
    \[ [g^{v_0}, h^{u_1}] \cdots [g^{v_0}, h^{u_{d/2}}] \cdot [h^{v_0}, g^{v_0}] \]
    consists of $[h,g]$ only at $v_0$. Then Lemma~\ref{tri-free-all-comms} implies that $\G$ is RA.
    \end{proof}

Proposition~\ref{even-cube-RA} can be generalized to certain strongly regular graphs and certain box products of complete graphs, and we will explore this in \cite{FullPaper}.

Let us briefly consider graphs of girth 3. In this case, since we can no longer apply Lemma~\ref{tri-free-all-comms}, we no longer have an easy way to translate local properties to global properties. Still, highly-symmetric graphs with triangles are amenable to analysis.

\begin{theorem} \label{thm:girth3-RA-graphs}
The following graphs are RA:
\begin{enumerate}[(a)]
    \item The wheel graph $W_n$ with $n \geq 5$,
    \item The graph with vertices labeled $1, 2, \dots, n$ with $n > 4$ and edges described by triangles $123, 234, \dots, (n-2)(n-1)n$.
\end{enumerate}
\end{theorem}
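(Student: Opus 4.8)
The plan is to prove both parts by isolating an arbitrary commutator on each single vertex, which forces $\G$ to be RA. Fix $g,h \in G$ and write $c = [g,h]$. By Proposition~\ref{comm-char} every generator $[g^u,h^v]$ of $\Commd(G,\G)$ equals $c^{\bx}$, where $\bx$ is the indicator vector of $\nbhd{u}\cap\nbhd{v}$; and for this fixed $c$, Proposition~\ref{basic-props}(a) shows that $L = \{\bx \in \Z^{|\G|} : c^{\bx}\in\Commd(G,\G)\}$ is a subgroup of $\Z^{|\G|}$ containing the indicator vectors of all the sets $\nbhd{u}\cap\nbhd{v}$. Consequently, if I can write each standard basis vector $\bbe_w$ as an integer combination of these indicator vectors (with $u\neq v$), then $[g,h]^{\bbe_w}\in\Commd(G,\G)\le G^\G$ for all $g,h$ and all vertices $w$; since commutators generate $[G,G]$, this gives $[G,G]^{|\G|}\le G^\G$, so $\G$ is RA. Subtraction of indicator vectors is realized at the group level by multiplying by an inverse commutator, since $[h^{u},g^{v}] = [g^{u},h^{v}]^{-1}$. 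The point to keep in mind is that, because these graphs have girth $3$, the edge-propagation of Lemma~\ref{tri-free-all-comms} is unavailable, so each $\bbe_w$ must be produced directly from the local intersection pattern. (Since only distinct pairs will be used, the computation in fact establishes the stronger conclusion that each graph is strongly RA.)

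For part (a), let $v_0$ be the hub and $v_1,\dots,v_k$ the rim cycle (indices mod $k$); under either standard convention, $n\ge 5$ forces $k\ge 4$. Since $\nbhd{v_0}$ is the whole vertex set, $\nbhd{v_0}\cap\nbhd{v_i}=\nbhd{v_i}=\{v_0,v_{i-1},v_i,v_{i+1}\}$, while $\nbhd{v_i}\cap\nbhd{v_{i+1}}=\{v_0,v_i,v_{i+1}\}$. Subtracting the second indicator vector from the first leaves $\bbe_{v_{i-1}}$, so
\[ [g^{v_0},h^{v_i}]\,[h^{v_i},g^{v_{i+1}}] = [g,h]^{\bbe_{v_{i-1}}}, \]
which shows $\bbe_{v_j}\in L$ for every rim vertex. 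Having all rim basis vectors, I then subtract $\bbe_{v_i}$ and $\bbe_{v_{i+1}}$ from $\nbhd{v_i}\cap\nbhd{v_{i+1}}=\{v_0,v_i,v_{i+1}\}$ to get $\bbe_{v_0}\in L$. Thus every $\bbe_w\in L$ and $W_n$ is RA. This choice of pairs deliberately sidesteps the one genuinely irregular wheel, the rim-$C_4$ case, in which opposite rim vertices would share two common rim neighbors.

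For part (b) the graph is the path square $P_n^2$, with $v_i\sim v_j$ iff $|i-j|\le 2$, so $\nbhd{v_i}=\{v_{i-2},\dots,v_{i+2}\}\cap\{v_1,\dots,v_n\}$. The key interior observation is that vertices at distance $4$ share a single neighbor:
\[ \nbhd{v_{j-2}}\cap\nbhd{v_{j+2}} = \{v_j\}\qquad(3\le j\le n-2), \]
so $[g^{v_{j-2}},h^{v_{j+2}}]=[g,h]^{\bbe_{v_j}}$ isolates the commutator on each interior vertex. For the boundary I peel inward using the interior vectors already obtained: $\nbhd{v_1}\cap\nbhd{v_4}=\{v_2,v_3\}$ yields $\bbe_{v_2}$ after subtracting $\bbe_{v_3}$, and then $\nbhd{v_1}\cap\nbhd{v_3}=\{v_1,v_2,v_3\}$ yields $\bbe_{v_1}$; the reflection $v_i\mapsto v_{n+1-i}$ handles $v_{n-1}$ and $v_n$. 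Hence every $\bbe_w\in L$ and the graph is RA.

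The step I expect to require the most care is the boundary bookkeeping in part (b): the intersections $\nbhd{u}\cap\nbhd{v}$ truncate near the two ends, so the clean ``distance-$4$'' identity fails there, and the four end vertices must be recovered one at a time in the correct order (innermost first). I would check the smallest case $n=5$ explicitly to confirm that the interior vectors needed for the boundary step already exist. Everything else reduces to a routine verification of closed-neighborhood intersections.
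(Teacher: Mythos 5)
Your proof is correct --- I verified the neighborhood intersections and commutator identities in both parts, including the edge cases (the rim-length-4 wheel and $n=5$ for the triangle strip) --- and while it shares the paper's general toolkit (isolating $[g,h]^{\bbe_w}$ on single vertices via Proposition~\ref{comm-char} and indicator-vector arithmetic), your specific route is genuinely different and delivers a strictly stronger conclusion. The paper's proof of (a) isolates a rim vertex by combining the adjacent-pair commutator $[h^{v_2},g^{v_3}]$ with the \emph{same-vertex} commutator $[g^{v_2},h^{v_2}]$, and it handles the hub by clicking it with the commutator $[h,g]$ itself, i.e.\ by an element of $[G,G]^\G$; for (b) it similarly uses $[h^{v_2},g^{v_4}][g^{v_2},h^{v_2}]$ and $[h^{v_3},g^{v_5}][g^{v_3},h^{v_3}]$ to seed vertices $1$ and $2$, then runs a strong induction along the strip. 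Those moves lie in $\Comm(G,\G)$ but not $\Commd(G,\G)$, so the paper establishes only that these graphs are RA. You instead stay entirely inside $\Commd(G,\G)$: in (a) you exploit $\nbhd{v_0}\cap\nbhd{v_i}=\nbhd{v_i}$ in place of the same-vertex commutator, and in (b) your distance-4 identity $\nbhd{v_{j-2}}\cap\nbhd{v_{j+2}}=\{v_j\}$ is precisely Lemma~\ref{threeinarow} applied to $u=v_j$ with neighbors $v_{j\pm2}$, replacing the paper's induction with direct isolation of interior vertices plus boundary peeling. The payoff is that you prove both families are \emph{strongly} RA, which the paper's argument cannot yield as written; the strengthening is genuine, since no $D_8$-style parity obstruction (as for $C_4$, $Q_d$, or $K_{m,n}$ with $\gcd(m,n)>1$) can arise here --- your generating supports include odd-size sets, namely the $3$-element intersections $\nbhd{v_i}\cap\nbhd{v_{i+1}}$ on the wheel and singletons on the strip. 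Your boundary bookkeeping in (b) is sound at the smallest case $n=5$, where the unique interior vertex $v_3$ is exactly the one needed to peel off $v_2$ and then $v_1$.
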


\begin{proof}
    (a) Label the central vertex of $W_n$ as $v_n$ and the rest, say clockwise, as $v_1, v_2, \dots, v_{n-1}$. Then $[h^{v_2}, g^{v_3}] \cdot [g^{v_2}, h^{v_2}] = [g,h]^{\bbe_1}$. Similarly, $[g,h]^{\bbe_i} \in [G^\Gamma, G^\Gamma]$ for $i = 2, \ldots, n-1$. Finally, since $[g,h]^{\bbe_1 + \cdots + \bbe_{n-1}} [h,g]^{v_n} = [h,g]^{\bbe_n}$, the result follows.

    (b) Note $[h^{v_2}, g^{v_4}][g^{v_2}, h^{v_2}] = [g, h]^{\bbe_1} \in G^\G$, and $[h^{v_3}, g^{v_5}][g^{v_3}, h^{v_3}] = [g, h]^{\bbe_2} \in G^\G$, so we can put any commutator in vertices 1 or 2 individually, and an easy strong induction argument shows we can put one in any individual vertex from there. 
\end{proof}

For any RA graph -- including all of the graphs in Theorem~\ref{thm:RA-families}, Proposition~\ref{even-cube-RA}, and Theorem~\ref{thm:girth3-RA-graphs} -- we can understand $G^\G$ just by understanding $(\Gab)^\G$. So far, we have only encountered one graph that is not RA (namely, $Q_3$ -- see Example~\ref{eg:not-RA}). Let us turn our attention to finding more.

\section{Computing whether a graph is RA}
\label{compute-RA}

\subsection{The RA Matrix}
\label{sec:RA-matrix}


    How can we determine whether a graph $\Gamma$ is RA? 
    By Proposition~\ref{prop:indistinguishable}, we can first make sure we're working with a graph that is connected and neighborhood-distinguishable. Then, we might compute the girth and try to apply Corollary~\ref{cor:girth-5} or Proposition~\ref{girth-4}. This leaves out graphs with 3-cycles and certain graphs of girth 4. What techniques remain to determine whether such a graph is RA?

   If we can show that elements of the form $[g^u, h^v] \in [G^\G, G^\G]$ actually generate all of $[G, G]^{|\G|}$, i.e. if $\Comm(G, \G) = [G, G]^{|\G|}$, for every $G$, then we know $\G$ is RA. To do so, we need to consider the set of indicator vectors for $B(u) \cap B(v)$. Just as in the proof of Proposition~\ref{Z-AW-universal}, if such vectors span $\Z^{|\G|}$ then the elements $[g^u, h^v]$ would generate all of $[G, G]^{|\G|}$. This gives us a new practical tool to help test whether a graph is RA.

    \begin{defi}
        The \emph{RA Matrix} $C_\G$ of $\G$ is the matrix whose rows are the indicator vectors for $B(v_i) \cap B(v_j)$ where $v_i$ and $v_j$ range over all the vertices of $\G$.
    \end{defi}

    The above discussion gives us
    
    \begin{theorem}
    \label{thm:CG}
        If $\Z^{C_\G} = \Z^{|\G|}$, i.e. if the $\Z$-span of $C_\G$ is all of $\Z^{|\G|}$, then $\G$ is RA.
    \end{theorem}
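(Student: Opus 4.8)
The plan is to show that the condition $\Z^{C_\G} = \Z^{|\G|}$ forces $\Comm(G,\G) = [G,G]^{|\G|}$ for every group $G$, which by definition makes $\G$ RA. The key structural fact I would invoke is Proposition~\ref{comm-char}, which tells us that each generating commutator $[g^{v_i}, h^{v_j}]$ of $\Comm(G,\G)$ equals $[g,h]^{\bx_{ij}}$, where $\bx_{ij}$ is precisely the indicator vector for $B(v_i) \cap B(v_j)$---that is, exactly the $(i,j)$-th row of the RA matrix $C_\G$. So the generators of $\Comm(G,\G)$ are indexed by the rows of $C_\G$, and the exponent vectors attached to a fixed commutator $[g,h]$ are exactly those rows.

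First I would fix an arbitrary group $G$ and an arbitrary commutator $c = [g,h] \in [G,G]$, and set out to produce $c^{\bbe_v}$ for each vertex $v$, since the elements $c^{\bbe_v}$ over all commutators $c$ and all vertices $v$ generate $[G,G]^{|\G|}$. By hypothesis the rows of $C_\G$ span $\Z^{|\G|}$, so for each standard basis vector $\bbe_v$ we may write $\bbe_v = \sum_{(i,j)} a_{ij}\, \bx_{ij}$ as an integer linear combination of rows of $C_\G$. The crucial move is to transport this $\Z$-linear relation among exponent vectors into a group-theoretic product among the corresponding commutators. This is exactly the mechanism already used in the proof of Proposition~\ref{Z-AW-universal}: using Proposition~\ref{basic-props}(a),(b), the product $\prod_{(i,j)} (c^{a_{ij}})^{\bx_{ij}} = c^{\sum a_{ij}\bx_{ij}} = c^{\bbe_v}$, and since each factor $(c^{a_{ij}})^{\bx_{ij}} = [g,h]^{a_{ij}\bx_{ij}}$ lies in $\Comm(G,\G)$, so does $c^{\bbe_v}$.

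The one point requiring a little care is that $\Comm(G,\G)$ must actually contain $[g,h]^{k\bx_{ij}}$ for every integer $k$, not merely $[g,h]^{\bx_{ij}}$. Here I would note that $\Comm(G,\G)$ is generated as a \emph{subgroup} by the elements $[g^u,h^v]$ over \emph{all} $g,h \in G$; in particular $[g^u, h^v]^{-1} = [h^u, g^v] = [h,g]^{\bx_{ij}} = ([g,h]^{-1})^{\bx_{ij}}$ is also a generator, so the subgroup generated by the family $\{[g,h]^{\bx_{ij}}\}$ already contains every integer power $[g,h]^{k\bx_{ij}}$ (the vector $\bx_{ij}$ has entries in $\{0,1\}$, so $c^{k\bx_{ij}} = (c^k)^{\bx_{ij}}$ and $c^k$ is again an element of $G$ whose commutator-form we can realize---indeed $c^k$ need not be a commutator, but since we range over all products of generators, taking the $k$-th power of the generator $[g,h]^{\bx_{ij}}$ directly yields $[g,h]^{k\bx_{ij}}$ by Proposition~\ref{basic-props}(a)). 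Thus the honest statement is simply that the subgroup generated by $\{[g,h]^{\bx_{ij}}\}_{(i,j)}$ is closed under forming these $\Z$-combinations of exponent vectors, exactly as in Proposition~\ref{Z-AW-universal}.

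The main obstacle, though it is minor, is keeping the quantifier over commutators straight: one does \emph{not} get every element of $[G,G]$ as a single commutator $[g,h]$, but $[G,G]^{|\G|}$ is generated by the $c^{\bbe_v}$ ranging over a generating set of commutators $c$ and over all $v$, and the argument above produces $c^{\bbe_v}$ for \emph{each individual} commutator $c$ independently. Since each such $c^{\bbe_v}$ lies in $\Comm(G,\G) \le [G^\G,G^\G] \le \CComm(G,\G) \le [G,G]^{|\G|}$ by the chain \eqref{all-layers}, and together they generate $[G,G]^{|\G|}$, we conclude $\Comm(G,\G) = [G,G]^{|\G|}$, hence $\CComm(G,\G) = [G,G]^{|\G|}$, so $[G,G]^{|\G|} \le G^\G$. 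As $G$ was arbitrary, $\G$ is RA.
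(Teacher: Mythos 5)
Your proposal is correct and takes essentially the same route as the paper: the paper's (brief) proof likewise uses Proposition~\ref{comm-char} to identify the generators $[g^u,h^v]$ of $\Comm(G,\G)$ with $[g,h]^{\bx}$ for rows $\bx$ of $C_\G$, and then applies the mechanism of Proposition~\ref{Z-AW-universal} to conclude that when these rows span $\Z^{|\G|}$ one obtains every $c^{\bbe_v}$, hence $\Comm(G,\G) = [G,G]^{|\G|}$ for every $G$. The care you take with integer powers (taking powers of the generator $[g,h]^{\bx}$ directly, which is valid since $\Comm(G,\G)$ is a subgroup and $\bx$ is a $\{0,1\}$-vector) and with the quantifier over commutators fills in details the paper leaves implicit.
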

    
    Note that, if $\Z^{C_\G} \neq \Z^{|\G|}$, then that only establishes that for some groups $G$ we may have $\Comm(G, \G) \neq [G,G]^{|\G|}$. It could still be the case that $\G$ is RA. For now though, we will only use Theorem~\ref{thm:CG} to verify certain graphs as RA and to find promising candidates of non-RA graphs. Let us start with some computational data.

    \begin{prop} \label{7-verts-RA}
    Every (connected, neighborhood-distinguishable) graph on up to $7$ vertices is RA.
    \end{prop}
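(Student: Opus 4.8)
The plan is to treat this as an exhaustive but finite verification, organized so that the overwhelming majority of cases are dispatched by \cref{thm:CG} and only a small residue, if any, requires hands-on work. First I would invoke \cref{prop:conn-comp,prop:indistinguishable} to restrict attention to connected, neighborhood-distinguishable graphs, since RA for an arbitrary graph reduces to RA for its connected components, and passing to a neighborhood-distinguishable quotient does not change $G^\G$. This is exactly the class counted by the sequence cited in the text (OEIS A004108), whose values for $n \le 7$ are small enough that every isomorphism type can be generated directly from the SageMath graph database. So the proposition reduces to checking a finite, explicitly enumerable list of graphs.

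For each graph $\G$ on the list I would compute the RA matrix $C_\G$, whose rows are the indicator vectors of the sets $\nbhd{v_i} \cap \nbhd{v_j}$ (including the diagonal case $i=j$, which contributes the closed neighborhoods themselves), and then compute its Smith Normal Form over $\Z$. If all nonzero elementary divisors equal $1$, then $\Z^{C_\G} = \Z^{|\G|}$ and \cref{thm:CG} immediately certifies that $\G$ is RA. Since the smallest known non-RA graph is $Q_3$ on $8$ vertices, I expect this single test to succeed for \emph{every} graph in the range $n \le 7$; verifying this is a routine linear-algebra computation made transparent by the paper's own machinery (Smith Normal Form, and the row-sum criterion of \cref{row-sums-fund-factors} as a sanity check on individual divisors).

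The main obstacle is the \emph{converse gap} flagged in the remark following \cref{thm:CG}: the condition $\Z^{C_\G} = \Z^{|\G|}$ is only sufficient, not necessary, because $C_\G$ detects only $\Comm(G,\G)$ and is blind to the larger groups $[G^\G,G^\G]$ and $\CComm(G,\G)$ in the chain \eqref{all-layers}. Hence the genuine content of the proof is to confirm that \emph{no} graph on at most $7$ vertices lands in this gap, i.e. that there is no $\G$ in this range which is RA yet has $\Z^{C_\G} \ne \Z^{|\G|}$. Should the computation above turn up any such $\G$, I would settle it by the finer, exact criterion instead: RA means $[G,G]^{|\G|} \le G^\G$ for \emph{all} $G$, and a local-to-global reduction via the Heisenberg groups $H(\F_p)$ (as developed later in the paper) collapses this infinite quantifier to a finite check over the relevant primes $p$, which can be confirmed directly in GAP. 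I expect at most a handful of such graphs, if any, so the crux is less any single difficult argument than two bookkeeping guarantees: that the enumeration is genuinely exhaustive, and that the sufficient test of \cref{thm:CG} is, empirically, tight throughout this range.
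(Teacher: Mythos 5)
Your proposal takes essentially the same approach as the paper: the authors' proof is exactly the exhaustive SageMath computation you describe, checking via Smith normal form that $\Z^{C_\G} = \Z^{|\G|}$ for every connected, neighborhood-distinguishable graph on at most $7$ vertices and then invoking \cref{thm:CG}. Your Heisenberg-group fallback for the ``converse gap'' is a sensible contingency but turns out to be unnecessary, since (as you anticipated) the sufficient test succeeds for every graph in this range.
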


    \begin{proof}
    Using the graph database in SageMath, we found that $\Z^{C_\G} = \Z^{|\G|}$ for all eligible graphs with at most 7 vertices. (See Figure~\ref{fig:sage-small-RA-code} in \ref{sec:app-code})
    \end{proof}

    The bound on Proposition~\ref{7-verts-RA} is as sharp as possible, since we already determined that the cube graph $Q_3$ (with 8 vertices) is not RA (see Example~\ref{eg:not-RA}). 

    If we are searching for graphs that are not RA, then we can restrict our attention to graphs whose RA Matrix has nontrivial elementary divisors. Sometimes, the structure of the graph makes it clear that this will occur.

    \begin{prop} \label{pqr-not-RA-mat}
        Let $p$ be a prime number. Suppose that every vertex of $\G$ has degree congruent to $-1 \pmod p$, that every pair of adjacent vertices of $\G$ has a number of common neighbors that is $-2 \pmod p$, and that every pair of vertices of $\G$ that are distance 2 apart have a number of common neighbors that is divisible by $p$. Then $C_\G$ 
        has an elementary divisor that is a multiple of $p$.
    \end{prop}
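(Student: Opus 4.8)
The plan is to show that \emph{every} row sum of the RA matrix $C_\G$ is divisible by $p$, and then to invoke Proposition~\ref{row-sums-fund-factors}. Recall that each row of $C_\G$ is the indicator vector for $B(v_i) \cap B(v_j)$ for some pair of vertices, so the sum of the entries in that row is exactly $|B(v_i) \cap B(v_j)|$. The strategy is therefore to compute this cardinality modulo $p$ according to the graph distance between $v_i$ and $v_j$, with the three hypotheses calibrated to cover precisely the three nontrivial cases.

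First I would handle the diagonal rows, where $v_i = v_j$. Here $B(v_i) \cap B(v_i) = B(v_i)$ has cardinality $\deg(v_i) + 1$, which is $\equiv -1 + 1 = 0 \pmod p$ by the degree hypothesis. Next, for adjacent vertices $v_i, v_j$, the intersection $B(v_i) \cap B(v_j)$ consists of $v_i$ and $v_j$ themselves together with all common neighbors of $v_i$ and $v_j$; hence its size is $2$ plus the number of common neighbors. Since the latter is $\equiv -2 \pmod p$, the size is $\equiv 0 \pmod p$. For vertices $v_i, v_j$ at distance $2$, neither lies in the other's closed neighborhood, so $B(v_i) \cap B(v_j)$ is exactly the set of common neighbors, whose number is divisible by $p$ by the third hypothesis. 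Finally, vertices at distance $3$ or more share no common neighbor and neither lies in the other's closed neighborhood, so the intersection is empty and the corresponding row is identically zero, with row sum $0$.

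Having established that every row sum is divisible by $p$, I would check that $C_\G$ has at least as many rows as columns so that Proposition~\ref{row-sums-fund-factors} applies: the number of columns is $|\G|$, while the rows already include the $|\G|$ diagonal vectors $B(v_i)$, so the row count is at least $|\G|$. Proposition~\ref{row-sums-fund-factors} then immediately produces an elementary divisor of $C_\G$ that is a multiple of $p$ (possibly $0$), which is exactly the claim.

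The only place demanding care — and the main source of a potential off-by-one error — is the adjacent case: one must remember that $v_i$ and $v_j$ themselves belong to $B(v_i) \cap B(v_j)$, contributing the extra $2$ that the hypothesis's residue $-2$ is designed to cancel, whereas at distance $2$ these two vertices drop out and only the common neighbors survive. Beyond this bookkeeping the argument is routine case analysis, and I do not anticipate any genuine obstacle.
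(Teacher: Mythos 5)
Your proof is correct and follows essentially the same route as the paper: both arguments show that every row sum of $C_\G$ (closed neighborhood, adjacent pair, distance-2 pair) is divisible by $p$ and then invoke Proposition~\ref{row-sums-fund-factors}. Your additional checks --- the zero rows for pairs at distance $\geq 3$ and the verification that $C_\G$ has at least $|\G|$ rows --- are details the paper leaves implicit, and they are handled correctly.
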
 
    \begin{proof}
        There are three types of rows of $C_\G$. 
        The rows that correspond to
        neighborhoods of single vertices have row sum $d+1$ for vertices of degree $d$. 
        The rows corresponding to $B(u) \cap B(v)$ where $u$ and $v$ are adjacent have row sum $q+2$ if they have $q$ common neighbors. 
        The rows corresponding to $B(u) \cap B(v)$ where $u$ and $v$ are distance two apart have row sum $r$ equal to how many common neighbors they have. The given conditions ensure that each of these sums is divisible by $p$, which ensures that the matrix has an elementary divisor that is divisible by $p$ (see Proposition~\ref{row-sums-fund-factors}).
    \end{proof}

\subsection{The sufficiency of the RA Matrix}
\label{SuffRAMatrix}
    After Theorem~\ref{thm:CG}, we remarked that we had not established the converse. Certainly, there are some groups $G$ where the RA matrix does not tell us the entire story about $\CComm(G, \G)$ when $\G$ is not $G$-RA. However, we will see that the RA matrix does suffice to tell us whether a group is $G$-RA for every Heisenberg group, and this will be enough to determine whether a graph is RA over every group or not.

    Let us start by considering a more general and somewhat more difficult question. Recall the chain of subgroups described in \eqref{all-layers}. Let us focus on the containment $[G^\G, G^\G] \leq \CComm(G, \G)$. Our question is: could it be that these groups are always equal?

    First, consider some examples where we replace $\G$ with a matrix $M$ and ask the analogous question.

    \begin{Example} \label{mat-2}
        Let $M = [2]$ and $G = D_8 = \langle r, s \mid r^4 = s^2 = 1, srs = r^{-1}\rangle$. Then $r^2$ is the only nontrivial commutator in $D_8$ and clicking with $r$ gives us $(r^2) \in G^M$; indeed, $G^M = \{(r^2), (1)\}$ since clicking with any element gives the square of that element and the only squares are $r^2$ and $1$, so $[G^M, G^M] = 1 \subsetneqq [G, G]^1 \cap G^M$.  
    \end{Example}

    \begin{Example} \label{mat-neg-one}
        Let
        \[M = \mat111{-1}.\]
        Then we can explicitly compute that $[D_8^M, D_8^M] = \{(r^2, r^2), (1, 1)\}$ but $r^{v_1}r^{v_2} = (r^2, 1) \in D_8^M$ so again $[G^M, G^M] \subsetneqq [G, G]^2 \cap G^M$. 
    \end{Example}

    Thus, using matrices with anything other than 0's and 1's is liable to produce examples like the above with $[G^M, G^M] \subsetneqq [G, G]^n \cap G^M$. We have already seen a small hint that matrices with only 0's and 1's may act in a special way; consider Proposition~\ref{basic-props}(c). In our current context, the point is that $[g^v, h^v] = [g,h]^v$, whereas this equation need not hold if we replace $v$ with a general vector $\bx$. 

    We will broaden our scope slightly though, to allow any set of $\{0,1\}$-vectors.
    In order to tease out the difference between $[G^S, G^S]$ and $\CComm(G,S)$, note that working modulo $[G^S, G^S]$ essentially lets you rearrange your clicks. For example,
    \[ g_1^{\bx_1} g_2^{\bx_2} [G^S, G^S] = g_2^{\bx_2} g_1^{\bx_1} [G^S, G^S]. \]

    Let us make a slightly technical definition.

    \begin{defi}
        We say $G$ has \emph{faithful abelian generators} if $G^\Ab \cong \bigoplus_{\alpha=1}^k \Z/r_\alpha\Z$ and for each $\alpha$ there is a representative $y_\alpha$ for $\overline{\bf e_\alpha} \in G^\Ab$ so that the order of $y_\alpha$ in $G$ equals $r_\alpha$.
    \end{defi}

    Note that by definition, for each $\alpha$ there always exists a representative $y$ for $\overline{\bf e_\alpha}$ such that $y^\alpha$ is in $[G, G]$; what we need is for $y^\alpha$ to actually be the identity element of $G$. Not every group $G$ has faithful abelian generators, and in fact the requirements of the next theorem can be lessened somewhat \cite{FullPaper}, but many of the common groups do have faithful abelian generators; symmetric, dihedral, and Heisenberg groups are only a few examples.

   \begin{theorem}
   \label{S1=S2-general}
        Let $S \subset \{0,1\}^n$. Let $G$ be any group with faithful abelian generators. Then $[G^S, G^S] = \CComm(G, S)$.
    \end{theorem}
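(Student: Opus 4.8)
The plan is to prove the nontrivial containment $\CComm(G,S) \leq [G^S, G^S]$, since the reverse containment $[G^S, G^S] \leq \CComm(G,S)$ is already noted (because $(\Gab)^S$ is abelian). The key intuition, which the excerpt flags, is that working modulo $[G^S, G^S]$ lets us freely commute clicks: $g_1^{\bx_1} g_2^{\bx_2} \equiv g_2^{\bx_2} g_1^{\bx_1} \pmod{[G^S, G^S]}$. First I would take an arbitrary element $w \in \CComm(G,S)$, writing it as a product of generators $w = g_1^{\bx_1} \cdots g_m^{\bx_m}$ with each $\bx_i \in S$. Using the commuting-modulo-commutators principle, I would reorganize this product so that all clicks with the same vector $\bx$ are grouped together, and then use Proposition~\ref{basic-props}(c) (valid since the vectors are $\{0,1\}$-valued) to collapse each group: the product of clicks $g^\bx h^\bx \cdots$ with a fixed $\bx$ becomes $(gh\cdots)^\bx$. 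Thus modulo $[G^S, G^S]$, every element of $G^S$ is congruent to a product $\prod_\bx k_\bx^\bx$ with at most one factor per distinct vector $\bx \in S$.

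Next I would analyze what it means for such a representative to lie in $\CComm(G,S)$, i.e.\ to have every coordinate in $[G,G]$. The strategy is to pass to the abelianization: project via the epimorphism $G^S \to (\Gab)^S$ from Proposition~\ref{prop:simpleprops}(a), whose kernel is exactly $\CComm(G,S)$. An element of $G^S$ lies in $\CComm(G,S)$ precisely when its image in $(\Gab)^S$ is trivial. So for $w \equiv \prod_\bx k_\bx^\bx$, the condition is that $\prod_\bx \bar{k_\bx}^\bx = 1$ in $(\Gab)^S$, where $\bar{k_\bx}$ denotes the image in $\Gab$. Decomposing $\Gab \cong \bigoplus_{\alpha=1}^k \Z/r_\alpha\Z$ and writing each $\bar{k_\bx}$ in coordinates, the triviality condition becomes a system of congruences on the exponents, one for each generator index $\alpha$ and each coordinate of $\Z^n$.

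The role of \emph{faithful abelian generators} is the crux, and this is where I expect the main obstacle to lie. The issue is that showing the abelianization image is trivial does not immediately show $w \in [G^S, G^S]$; I must actually exhibit $w$ as a product of honest commutators of elements of $G^S$. The plan is to use the faithful representatives $y_\alpha$ to build explicit elements of $G^S$ whose product realizes $w$ up to a genuine element of $[G^S, G^S]$. Concretely, for each $\alpha$ I would replace the relevant $k_\bx$ by a power of $y_\alpha$ times a commutator; because $y_\alpha$ has order exactly $r_\alpha$ in $G$ (not merely in $\Gab$), the relations forced by the abelianization being trivial lift to genuine relations in $G^S$ rather than leaving a leftover central-type discrepancy. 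The faithfulness is precisely what prevents an element that is trivial in the abelianization from failing to be a product of commutators: without it, a generator $y_\alpha$ with $y_\alpha^{r_\alpha} \in [G,G] \setminus \{1\}$ would produce exactly the kind of obstruction seen in Example~\ref{mat-2}, where $[G^M, G^M] \subsetneq \CComm(G,M)$. I would therefore carefully track, coordinate by coordinate, that each congruence satisfied in the abelianization allows me to cancel the corresponding power of $y_\alpha$ in $G^S$ itself, leaving only a product of commutators. Assembling these cancellations across all $\alpha$ and all coordinates shows $w \in [G^S, G^S]$, completing the containment and hence the equality.
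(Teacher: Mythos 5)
Your proposal is correct and takes essentially the same route as the paper's own proof: rearrange the clicks modulo $[G^S,G^S]$, collapse clicks sharing a $\{0,1\}$-vector via Proposition~\ref{basic-props}(c), write each resulting click element as a product of powers of the faithful generators $y_\alpha$ times a commutator, and use the fact that $y_\alpha$ has order exactly $r_\alpha$ in $G$ to upgrade the coordinate-wise congruences from the abelianization into genuine cancellations in $G^n$, leaving only commutator clicks, which lie in $[G^S,G^S]$. Nothing further is needed.
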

    \begin{proof}
        By definition $[G^S, G^S] \le \CComm(G, S)$, so we just need to show the opposite containment. Since $[G^S, G^S]\lhd G^S$, we will take a coset of $[G^S, G^S]$ represented by an element of $\CComm(G, S) = [G, G]^n \cap G^S$. If $S = \{\bx_1, \bx_2, \dots, \bx_s\}$, such an element is a product of clicks $g_i^{\bx_i}$, and modulo $[G^S, G^S]$, we can rearrange the order of such clicks, so a new equivalent representative can be written as 
        \begin{align*}
        g_{11}^{\bx_1}g_{12}^{\bx_1} \cdots g_{1,t_1}^{\bx_1} \cdots g_{s,1}^{\bx_s}g_{s,2}^{\bx_s} \cdots g_{s,t_s}^{\bx_s} \\
        = (g_{11}g_{12} \cdots g_{1,t_1})^{\bx_1} \cdots (g_{s,1}g_{s,2} \cdots g_{s,t_s})^{\bx_s}    
        \end{align*}
        (using the fact that each vector $\bx_i$ has only 0's and 1's in it; see Proposition~\ref{basic-props}). 
        Let $g_i = g_{i1}g_{i2}\cdots g_{i, t_i}$ and write the representative of the coset of $[G^S, G^S]$ as $g_1^{\bx_1} \cdots g_s^{\bx_s}[G^S, G^S]$. Write $G^\Ab \cong \bigoplus_{\alpha=1}^k \Z/r_\alpha\Z$ and for each $\alpha$, pick a representative $y_\alpha$ for $\overline{\bf e_\alpha} \in G^\Ab$ so that the order of $y_\alpha$ in $G$ equals $r_\alpha$ (this is the hypothesis about $G$). 
        Now, for each $i$, if the image $\overline{g_i}$ of $g_i$ in $\Gab$ is $(q_{i, 1}, q_{i, 2}, \dots, q_{i, k})$ then we can write $g_i = \left(\prod_{\alpha=1}^k y_\alpha^{q_{i, \alpha}}\right)c_i$ for some  $c_i \in [G, G]$. Since $\prod_{i=1}^s g_i^{\bx_i}[G^S, G^S] \in [G, G]^n$, for each coordinate $j = 1, 2, \dots, n$ we have $\overline{\prod_{i=1}^s g_i^{(\bx_i)_j}} = \bar 1 \in G/[G, G]$ so for any fixed coordinate $j = 1, \dots, n$ and any fixed summand $\alpha = 1, \dots, k$ of $G^\Ab$ we have $\prod_{i=1}^s y_\alpha^{q_{i, \alpha}(\bx_{i})_j} \in [G, G]$. In other words, $\sum_{i=1}^s q_{i, \alpha}(\bx_i)_j \equiv 0 \pmod {r_\alpha}$ so $\prod_{i=1}^s y_\alpha^{q_{i, \alpha}(\bx_{i})_j} = 1 \in G$ and, putting these together, for each $\alpha$ we have $\prod_{i=1}^s (y_\alpha^{q_{i, \alpha}})^{\bx_i} = (1)_{j=1}^n \in G^n$. 

        But that means, modulo $[G^S, G^S]$, we can rewrite the representative
        \[\prod_{i=1}^s \left(\prod_{\alpha=1}^k y_\alpha^{q_{i, \alpha}} c_i\right)^{\bx_i}\]
        of the coset of $[G^S, G^S]$ as
        \[(\prod_{i=1}^s (y_1^{q_{i, 1}})^{\bx_i}) (\prod_{i=1}^s (y_2^{q_{i, 2}})^{\bx_i}) \cdots (\prod_{i=1}^s (y_k^{q_{i, k}})^{\bx_i})(\prod_{i=1}^s c_i^{\bx_i})\]
        and each of the first $k$ products is simply $(1)_{j=1}^n$, while the last product is in $[G^S, G^S]$, so the original coset of $[G^S, G^S]$ is the identity coset.
    \end{proof}

    \begin{coro}
        \label{S1=S2}
        Let $S \subset \{0, 1\}^n$. For any symmetric, dihedral, or Heisenberg group $G$, we have $\CComm(G, S) = [G^S, G^S]$.
    \end{coro}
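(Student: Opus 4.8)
The plan is to deduce this directly from Theorem~\ref{S1=S2-general}: since that theorem gives $\CComm(G, S) = [G^S, G^S]$ for every $S \subset \{0,1\}^n$ whenever $G$ has faithful abelian generators, it suffices to verify that each of the three named families has faithful abelian generators. So the entire task reduces to exhibiting, for each family, representatives $y_\alpha$ of the standard generators $\overline{\mathbf e_\alpha}$ of $\Gab$ whose order in $G$ matches the order $r_\alpha$ in $\Gab$.

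For a symmetric group $S_m$ with $m \ge 2$, the abelianization is $\Z/2\Z$ via the sign homomorphism, so $k = 1$ and $r_1 = 2$. A transposition is an odd permutation and therefore maps to the generator $\overline{\mathbf e_1}$, while having order exactly $2 = r_1$ in $S_m$; hence it is a faithful representative. (For $m \le 1$ the abelianization is trivial and the condition is vacuous.) For a dihedral group I would use the presentation $D_{2n} = \langle x, y \mid x^2 = y^2 = (xy)^n = 1\rangle$, in which both generators are involutions. Since $[x,y] = (xy)^2$, the commutator subgroup is $\langle (xy)^2 \rangle$, so $D_{2n}^{\Ab} \cong \Z/2\Z$ when $n$ is odd (generated by $\bar x = \bar y$) and $D_{2n}^{\Ab} \cong \Z/2\Z \oplus \Z/2\Z$ when $n$ is even (generated by $\bar x$ and $\bar y$). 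In every case each $r_\alpha = 2$, and the reflections $x$ and $y$, being involutions that map onto the corresponding generators, serve as faithful representatives.

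For a Heisenberg group $H_n(\F_p)$, the abelianization is $(\Z/p\Z)^{2n}$, so every $r_\alpha$ equals $p$, and the $2n$ standard generators recorded in \cref{sec:gp-theory} map onto $\overline{\mathbf e_\alpha}$. If $p$ is odd, the group has exponent $p$, so each of these nontrivial generators has order exactly $p = r_\alpha$. If $p = 2$, these same standard generators are precisely involutions (again as recorded in \cref{sec:gp-theory}), so they have order $2 = r_\alpha$. Thus $H_n(\F_p)$ has faithful abelian generators in all cases, and the corollary follows by applying Theorem~\ref{S1=S2-general} to each family.

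The only place demanding genuine care is the order-$2$ behavior at $p = 2$ for the Heisenberg groups and at even $n$ for the dihedral groups: there the relevant abelianization generators have order $2$, yet a careless lift (for instance a rotation $r$ of order $n$, or a Heisenberg element whose square is nontrivial) could have larger order and fail the faithfulness requirement. Choosing involutions as the representatives in both situations is exactly what circumvents this, and it is the one subtlety that keeps the corollary from being entirely automatic.
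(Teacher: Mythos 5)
Your proof is correct and takes essentially the same approach as the paper: both reduce the corollary to Theorem~\ref{S1=S2-general} by exhibiting faithful abelian generators for each family (a transposition for $S_m$, reflections for $D_{2n}$, and the standard generators of $H_n(\F_p)$, which have order $p$ by the exponent-$p$ property for odd $p$ and are involutions when $p=2$). The only difference is that you spell out the dihedral abelianization computation and the $p=2$ edge case in slightly more detail than the paper does.
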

    \begin{proof}
        The abelianization of a symmetric group $S_m^\Ab \cong \Z/2\Z$ is generated by $\overline{(12)}$. Any dihedral group $G = D_{2k}$ has $G^\Ab \cong \Z/2\Z$,  or $\Z/2\Z \times \Z/2\Z$ and in either case we can take the representative of each coordinate to be a flip. The abelianization of any Heisenberg group of order $p^{2n+1}$ is a direct sum of $2n$ copies of $\Z/p\Z$'s, and the group has exponent $p$ for $p > 2$ so any representative of any coset will have order $p$. For $p = 2$, we can take the representatives to be elements of order 2 (just let $\vec a = \vec 0$ or $\vec b = \vec 0$ and set the upper-right element to 1). If $G \cong [G, G] \oplus G^\Ab$, there is a natural monomorphism from $G^\Ab$ to $[G, G]$ under which the generators of $G^\Ab \cong \Z/r_1\Z \oplus \cdots \oplus \Z/r_k\Z$ go to elements of $G$ of the same order.
    \end{proof}

    \begin{theorem}
        \label{thm:RA-over-Heisenberg}
        \begin{enumerate}[(1)]
            \item If $G$ is any Heisenberg group $H(\F_p)$ and $\G$ is any graph, then $\CComm(G,\G) = [G^\G, G^\G] = \Comm(G, \G) = \langle [g^u, h^v] \mid g, h \in G, u, v \in \G \rangle$.
            \item For every prime $p$, $\G$ is RA over $H(\F_p)$ if and only if $C_\G$ has full rank modulo $p$.
            \item If the graph $\G$ is RA over every Heisenberg group, then $C_\G$ has full rank over $\Z$ and $\Comm(G, \G) = [G, G]^n$ for every group $G$; in particular, $\G$ is RA over every group. 
        \end{enumerate}        
    \end{theorem}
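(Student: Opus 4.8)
The plan is to first isolate the single algebraic identity that drives all three parts: because every row of $C_\G$ is a $\{0,1\}$-vector, the basic commutator subgroup is literally a graph-power of the commutator subgroup, namely $\Comm(G,\G)=[G,G]^{C_\G}$. To see this, fix a row $\bx$ of $C_\G$, say the indicator of $B(u)\cap B(v)$. Proposition~\ref{comm-char} gives $[g^u,h^v]=[g,h]^{\bx}$, and Proposition~\ref{basic-props}(c) (valid since $\bx$ has only $0$'s and $1$'s) lets us multiply coordinate-wise, so $\{[g,h]^{\bx}: g,h\in G\}$ generates $\{d^{\bx}: d\in[G,G]\}$. Ranging over all rows $\bx$ shows $\Comm(G,\G)=\langle d^{\bx}: d\in[G,G],\ \bx\ \text{a row of}\ C_\G\rangle=[G,G]^{C_\G}$. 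I would record this first.

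For part (1), I would use that in $H(\F_p)$ the commutator subgroup is the center (property~3 of Heisenberg groups). Hence each generator $[g,h]^{\bx}$ of $\Comm(G,\G)$ lies in $Z(G)^n=Z(G^n)$ and is therefore central in $G^\G\le G^n$; thus $\Comm(G,\G)$ is a central, and in particular normal, subgroup of $G^\G$. Since $[G^\G,G^\G]$ is the normal closure of $\Comm(G,\G)$ in $G^\G$ (a fact stated after \eqref{all-layers}), normality forces $[G^\G,G^\G]=\Comm(G,\G)$. Because $H(\F_p)$ has faithful abelian generators and the rows of $A_\G$ are $\{0,1\}$-vectors, Corollary~\ref{S1=S2} gives $\CComm(G,\G)=[G^\G,G^\G]$. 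All four groups then coincide, the final equality being the definition of $\Comm(G,\G)$.

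For part (2), observe that $\G$ is RA over $G$ precisely when $\CComm(G,\G)=[G,G]^{|\G|}$, which by part~(1) is equivalent to $\Comm(G,\G)=[G,G]^{|\G|}$. Substituting $\Comm(G,\G)=[G,G]^{C_\G}$ and using $[G,G]\cong\Z/p\Z$ for $G=H(\F_p)$, the condition becomes $(\Z/p\Z)^{C_\G}=(\Z/p\Z)^{|\G|}$. By the elementary-divisor description of $H^M$ in \cref{sec:abelian}, this holds exactly when the rows of $C_\G$ span $\F_p^{|\G|}$, i.e. exactly when $C_\G$ has full rank modulo $p$.

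For part (3), suppose $\G$ is RA over $H(\F_p)$ for every prime $p$. By part~(2), $C_\G$ has full rank mod $p$ for all $p$, so for each $p$ some maximal (that is, $n\times n$, noting $C_\G$ has at least the $n$ rows $B(v_i)$) minor is nonzero mod $p$; hence the gcd $d_n(C_\G)$ of these minors is divisible by no prime and equals $1$. Consequently every elementary divisor of $C_\G$ is $1$ and $\Z^{C_\G}=\Z^{|\G|}$ — this passage from information at each prime to a statement over $\Z$ is the local-to-global crux. Then for an arbitrary group $G$, I would apply Proposition~\ref{Z-AW-universal} with the group $[G,G]$ and $S$ the set of rows of $C_\G$ to conclude $[G,G]^{C_\G}=[G,G]^{|\G|}$, that is $\Comm(G,\G)=[G,G]^{|\G|}$; since $\Comm(G,\G)\le[G^\G,G^\G]\le G^\G$, this yields $[G,G]^{|\G|}\le G^\G$, so $\G$ is RA over every $G$ (equivalently, invoke Theorem~\ref{thm:CG} from $\Z^{C_\G}=\Z^{|\G|}$). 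The main obstacle is precisely this jump from per-prime data to a single integral condition and then to all groups at once; what makes it work is that the $\{0,1\}$-identity $\Comm(G,\G)=[G,G]^{C_\G}$ combined with Proposition~\ref{Z-AW-universal} lets the one lattice condition $\Z^{C_\G}=\Z^{|\G|}$ govern every group simultaneously.
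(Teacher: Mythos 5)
Your proof is correct and follows essentially the same route as the paper: part (1) via centrality of $[G,G]$ in Heisenberg groups plus Corollary~\ref{S1=S2}, part (2) by identifying $\Comm(H(\F_p),\G)$ with $(\Z_p)^{C_\G}$, and part (3) by the same elementary-divisor local-to-global step. Your only variation is cosmetic: you isolate the identity $\Comm(G,\G)=[G,G]^{C_\G}$ for arbitrary $G$ up front and cite Proposition~\ref{Z-AW-universal} directly where the paper cites Theorem~\ref{thm:CG}, which encapsulates the same argument.
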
    
    \begin{proof}
        (1) We have shown the first equality in Corollary~\ref{S1=S2}, and the last is just the definition of $\Comm(G, \G)$. The middle equality follows from $[G^\G, G^\G]$ being the normal closure of $\Comm(G, \G)$ and the fact that the commutator subgroup of every Heisenberg group is central.

        (2) Let $G$ be a Heisenberg group of order $p^3$. Note that $[G,G]^n \cong (\Z_p)^n = \langle c \rangle^n$ for a fixed commutator $c$. Since $[g^u, h^v] = [g,h]^{B(u) \cap B(v)}$ and $[g,h]$ is a power of $c$, $\Comm(G,\G) = (\Z_p)^{C_\G}$. 

        By (1), $\CComm(G,\G) = \Comm(G,\G)$. $\G$ is RA over $G$ if and only if $[G,G]^n \sub \CComm(G,\G)$. That is, if and only if $(\Z_p)^n \sub (\Z_p)^{C_\G}$; i.e. $C_\G$ has full rank modulo $p$.
        
        (3) By part (2), if $\G$ is RA over every Heisenberg group, then $C_\G$ is full rank modulo every prime $p$, so $\Z^{C_\G} = \Z^{|\G|}$ (no  elementary divisors are divisible by any prime, 
        hence are just 1), which implies the result by Theorem~\ref{thm:CG}.
    \end{proof}

    We now can prove our main theorem of this section.

    \begin{theorem} \label{RA-iff}
    $\G$ is RA if and only if $\Z^{C_\G} = \Z^{|\G|}$.
    \end{theorem}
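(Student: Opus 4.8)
The plan is to prove the two implications separately, leaning entirely on results already in hand. The implication ($\Leftarrow$) is immediate: if $\Z^{C_\G} = \Z^{|\G|}$, then Theorem~\ref{thm:CG} gives at once that $\G$ is RA. So the real work is the forward direction, and even there the heavy lifting has already been done in Theorem~\ref{thm:RA-over-Heisenberg}.

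For the forward implication ($\Rightarrow$), the key observation is that ``$\G$ is RA'' means $\G$ is RA over \emph{every} group $G$, and in particular over every Heisenberg group $H(\F_p)$. I would then invoke Theorem~\ref{thm:RA-over-Heisenberg}(2), which says that $\G$ is RA over $H(\F_p)$ exactly when $C_\G$ has full rank modulo $p$. Since $\G$ is RA over $H(\F_p)$ for every prime $p$, I conclude that $C_\G$ has full rank modulo every prime.

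The final step is a local-to-global passage: full rank modulo every prime means that none of the first $|\G|$ elementary divisors of $C_\G$ is divisible by any prime, so each equals $1$ (in particular, each is nonzero, so there are no vanishing elementary divisors among them). Hence the Smith normal form of $C_\G$ carries $|\G|$ ones on its diagonal, which is precisely the statement $\Z^{C_\G} = \Z^{|\G|}$. Equivalently, one may cite Theorem~\ref{thm:RA-over-Heisenberg}(3) directly, which already packages this local-to-global argument and deduces $\Z^{C_\G} = \Z^{|\G|}$ from RA-ness over every Heisenberg group.

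Because both halves are corollaries of earlier results, there is no genuine technical obstacle remaining in this theorem itself; the substantive content lives in Theorem~\ref{thm:RA-over-Heisenberg}. The single point I would flag as deserving care is the conceptual one: that the \emph{family} of Heisenberg groups alone suffices to detect RA-ness, so that RA-ness over just these groups already forces RA-ness over all groups. This is exactly what upgrades Theorem~\ref{thm:CG} from a sufficient condition to a characterization, and it is the part established in Theorem~\ref{thm:RA-over-Heisenberg}(3).
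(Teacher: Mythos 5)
Your proposal is correct and follows exactly the paper's route: the paper's own proof likewise cites Theorem~\ref{thm:CG} for the backward direction and, for the forward direction, observes that RA-ness implies RA-ness over every Heisenberg group and then invokes Theorem~\ref{thm:RA-over-Heisenberg} (whose part~(3) contains precisely the local-to-global passage from full rank modulo every prime to $\Z^{C_\G} = \Z^{|\G|}$ that you spell out). The only difference is that you unpack the elementary-divisor argument inline, whereas the paper leaves it packaged inside Theorem~\ref{thm:RA-over-Heisenberg}(3).
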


    \begin{proof}
    Theorem~\ref{thm:CG} proves one direction. For the other, suppose that $\G$ is RA. Then it is RA over every Heisenberg group, and Theorem~\ref{thm:RA-over-Heisenberg} proves the claim.
    \end{proof}

    Theorem~\ref{RA-iff} gives a  practical way of determining if a graph is RA. The RA matrix $C_\G$ may have some elementary divisors, but by the formula $\frac{d_i(C_\G)}{d_{i-1}(C_\G)}$ for the $i$-th elementary divisor, since $C_\G$ contains the rows of the activation matrix $A_\G$ for $\G$, each $d_i(C_\G)$ divides $d_i(A_\G)$, and therefore any primes dividing the elementary divisors of $C_\G$ must also divide the largest elementary divisor $d$ of $A_\G$. That means one can look at just the primes $p$ dividing $d$, compute the rank of $C_\G \pmod p$ for each such prime $p$, and if the rank turns out to be full for all such primes, we know the graph is RA. Theorem~\ref{RA-iff} also tells us that there is no hidden information between the layers $\Comm(G, \G), [G^\G, G^\G]$, and $\CComm(G,\G)$ that may make it difficult to determine if the graph is RA. It is enough to determine whether $\Z^{C_\G} = \Z^{|\G|}$. 

    Considering Proposition~\ref{pqr-not-RA-mat}, we get the following corollary of Theorem~\ref{RA-iff}.

    \begin{coro} \label{pqr-not-RA}
    Let $p$ be a prime number. Suppose that every vertex of $\G$ has degree congruent to $-1 \pmod p$, that every pair of adjacent vertices of $\G$ has a number of common neighbors that is $-2 \pmod p$, and that every pair of vertices of $\G$ that are distance 2 apart have a number of common neighbors that is divisible by $p$. Then $\G$ is not RA over $H(\F_p)$. 
    \end{coro}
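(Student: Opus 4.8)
The plan is to combine Proposition~\ref{pqr-not-RA-mat} with Theorem~\ref{thm:RA-over-Heisenberg}(2), which together reduce the entire claim to a statement about the rank of $C_\G$ modulo $p$. The hypotheses stated here are verbatim those of Proposition~\ref{pqr-not-RA-mat}, so the first step is simply to invoke that proposition and conclude that $C_\G$ has an elementary divisor that is a multiple of $p$ (possibly equal to $0$). No new combinatorial work about neighborhoods or common neighbors is needed, since all of it has already been packaged into Proposition~\ref{pqr-not-RA-mat}.

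The second step is to translate this divisibility statement into a rank deficiency modulo $p$. Writing the invariant factors of $C_\G$ as $a_1 \mid a_2 \mid \cdots \mid a_n$, where $n = |\G|$ is the number of columns and $C_\G$ has at least $n$ rows (coming from the single-neighborhood rows $B(v_i)$ alone), the rank of $C_\G$ over $\F_p$ equals the number of indices $i$ with $p \nmid a_i$, where a factor equal to $0$ is counted as divisible by $p$. Since the first step produces some $a_i$ with $p \mid a_i$ (or $a_i = 0$), at least one factor fails to contribute to the rank, so the rank of $C_\G$ modulo $p$ is strictly less than $n$; in other words, $C_\G$ does not have full rank modulo $p$.

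The final step is to apply Theorem~\ref{thm:RA-over-Heisenberg}(2): $\G$ is RA over $H(\F_p)$ if and only if $C_\G$ has full rank modulo $p$. Having just shown the rank is deficient, we conclude that $\G$ is not RA over $H(\F_p)$, which is exactly the assertion.

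The only genuinely delicate point is the bookkeeping in the middle step, namely making sure the ``possibly $0$'' caveat in Proposition~\ref{pqr-not-RA-mat} still forces a rank drop modulo $p$. This is harmless: a zero invariant factor already witnesses a rank deficiency over $\Z$, hence over every $\F_p$, while a nonzero multiple of $p$ reduces to $0$ after reduction mod $p$. Either way the count of invariant factors coprime to $p$ falls below $n$, so the rank over $\F_p$ is not full, and the rest of the argument is a direct citation of earlier results.
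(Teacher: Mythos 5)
Your proposal is correct and follows essentially the same route the paper intends: the paper states this as an immediate consequence of Proposition~\ref{pqr-not-RA-mat} combined with the Heisenberg rank criterion of Theorem~\ref{thm:RA-over-Heisenberg}(2) (packaged in the text as a corollary of Theorem~\ref{RA-iff}), which is exactly your chain of citations. Your careful bookkeeping that an invariant factor divisible by $p$ (or equal to $0$) forces the rank of $C_\G$ modulo $p$ below $|\G|$ is sound and merely makes explicit what the paper leaves unwritten.
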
 

    In particular, letting $p = 2$, we have the following result. 

    \begin{coro} \label{girth-4-not-RA}
    If $\G$ has girth 4, every vertex has odd degree, and every pair of vertices at a distance 2 apart has an even number of common neighbors, then $\G$ is not $D_8$-RA.
    \end{coro}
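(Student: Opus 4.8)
The plan is to specialize Corollary~\ref{pqr-not-RA} to the prime $p = 2$ and verify that the three numerical hypotheses stated there become exactly the combinatorial hypotheses in the present statement. First I would translate the degree condition: ``degree congruent to $-1 \pmod p$'' with $p = 2$ reads as every vertex having degree $\equiv 1 \pmod 2$, i.e.\ odd degree, which is precisely our hypothesis.

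Next I would handle the adjacency condition. Corollary~\ref{pqr-not-RA} requires that every pair of adjacent vertices have a number of common neighbors congruent to $-2 \pmod p$; with $p = 2$ this is simply the requirement that adjacent vertices have an even number of common neighbors. Here I would invoke the girth-$4$ hypothesis: since $\G$ has girth $4$ it contains no triangles, so any two adjacent vertices have exactly $0$ common neighbors, and $0 \equiv -2 \pmod 2$. Thus the adjacency condition is satisfied automatically. The distance-$2$ condition of Corollary~\ref{pqr-not-RA} asks that vertices at distance $2$ have a number of common neighbors divisible by $p = 2$, i.e.\ an even number --- which is exactly our third hypothesis.

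With all three hypotheses of Corollary~\ref{pqr-not-RA} verified for $p = 2$, that corollary yields that $\G$ is not RA over $H(\F_2)$. To finish, I would recall the isomorphism $H(\F_2) \cong D_8$ recorded in \cref{sec:gp-theory} (property~2 of the Heisenberg groups). Since the property of being RA over a group $G$ is defined by the isomorphism-invariant condition $[G,G]^{|\G|} \le G^\G$, ``not RA over $H(\F_2)$'' is the same as ``not $D_8$-RA,'' which completes the argument.

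There is no substantive obstacle here, as the statement is a direct specialization. The only point requiring a moment's care is the adjacency condition: one must observe that girth $4$ forces triangle-freeness, so the common-neighbor count for adjacent pairs is $0$, which is even and hence matches the $-2 \pmod 2$ requirement. Once this is noted, the result drops out of Corollary~\ref{pqr-not-RA} combined with the identification $H(\F_2) \cong D_8$.
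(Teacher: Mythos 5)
Your proof is correct and follows exactly the paper's route: the paper obtains this corollary by specializing Corollary~\ref{pqr-not-RA} to $p=2$ (``In particular, letting $p=2$\ldots''), with the triangle-freeness from girth $4$ supplying the adjacent-vertex condition and $H(\F_2)\cong D_8$ giving the stated group. Your write-up just makes these two implicit observations explicit, which is fine.
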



    We can now present our first families of graphs that are not RA.

    \begin{coro} \label{graphs-not-RA}
    The following graphs are not $D_8$-RA:
    \begin{enumerate}[(a)]
        \item The cube $Q_d$ with $d$ odd.
        \item The folded cube $\square_d$ with $2^{d-1}$ vertices, where $d$ is odd or 2.
    \end{enumerate}       
    \end{coro}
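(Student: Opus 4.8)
The plan is to verify that each family satisfies the hypotheses of \cref{girth-4-not-RA} (equivalently \cref{pqr-not-RA} with $p=2$), which will immediately give the conclusion. So the entire task reduces to three numerical checks for each graph: (i) every vertex has odd degree, (ii) adjacent vertices have a number of common neighbors congruent to $-2 \equiv 0 \pmod 2$ (i.e. even), and (iii) vertices at distance $2$ have an even number of common neighbors. Since \cref{girth-4-not-RA} is stated with girth $4$ and the three parity conditions, I would double-check that the distance-$2$ condition is what's needed; note that \cref{pqr-not-RA} with $p=2$ actually only requires odd degree, that adjacent vertices have an even number of common neighbors, and that distance-$2$ vertices have an even number of common neighbors, so I will work directly from those three parity facts.

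For part (a), the cube $Q_d$ with $d$ odd: each vertex has degree $d$, which is odd, giving (i). Two adjacent vertices of $Q_d$ differ in exactly one bit, and they have exactly $0$ common neighbors (the hypercube is triangle-free), so (ii) holds since $0$ is even. Two vertices at distance $2$ differ in exactly two bits, and they have exactly $2$ common neighbors (flip either of the two differing bits), so (iii) holds since $2$ is even. Thus \cref{girth-4-not-RA} applies and $Q_d$ is not $D_8$-RA for $d$ odd. (This generalizes \cref{eg:not-RA}, the $Q_3$ case.)

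For part (b), the folded cube $\square_d$: I first need to fix the definition and compute the three combinatorial quantities. The folded cube on $2^{d-1}$ vertices is obtained from $Q_d$ by identifying antipodal vertices, equivalently the Cayley graph of $(\Z/2\Z)^{d-1}$ with connection set given by the standard basis vectors together with the all-ones vector; its degree is $d$. So (i) requires $d$ odd, which is one of the allowed cases. The main work is counting common neighbors of adjacent and of distance-$2$ pairs, and here I expect the $d=2$ case to behave specially (the folded cube $\square_2$ has only $2$ vertices and must be handled or interpreted separately, which is presumably why it is singled out alongside the odd $d$). The key subtlety is that in the folded cube a pair of vertices can be joined by two "types" of edges (a single-bit flip or the all-bits flip), so the common-neighbor counts differ from $Q_d$ and depend on $d \bmod$ small numbers; I would compute these counts carefully using the group structure of the connection set.

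The main obstacle will be the common-neighbor count in the folded cube for part (b), since the identification of antipodes changes the local geometry and introduces the extra all-ones generator. Concretely, for two vertices differing by a basis vector $e_i$, their common neighbors correspond to connection-set elements $s, t$ with $s + t = e_i$; I must enumerate these and check the count's parity, and separately do the distance-$2$ analysis. I expect the counts to come out even exactly when $d$ is odd (and trivially or by convention in the $d=2$ case), matching the stated hypothesis. Once these parities are confirmed even and the degree $d$ is odd, \cref{girth-4-not-RA} finishes both parts; I would also confirm the girth-$4$ hypothesis is met (or note that \cref{pqr-not-RA} with $p=2$ does not actually require girth $4$, only the three parity conditions, so girth need not be separately verified).
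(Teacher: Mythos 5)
Your part (a) is complete and is in substance identical to the paper's one-line argument (odd degree $d$, triangle-free, distance-$2$ pairs have exactly $2$ common neighbors, then \cref{girth-4-not-RA}). The genuine gap is in part (b): the common-neighbor enumeration for the folded cube, which constitutes essentially the entire content of that part, is never performed --- you write ``I would compute these counts carefully'' and ``I expect the counts to come out even exactly when $d$ is odd,'' which is a plan, not a proof. Your framework is exactly right: common neighbors of $u$ and $v$ correspond to ordered pairs $s,t$ in the connection set $S=\{\mathbf{e}_1,\dots,\mathbf{e}_{d-1},\mathbf{1}\}$ with $s+t=u+v$, and the count is short. If $u+v=\mathbf{e}_i+\mathbf{e}_j$, the solutions are $\{\mathbf{e}_i,\mathbf{e}_j\}$ together with $\{\mathbf{1},\mathbf{1}+\mathbf{e}_i+\mathbf{e}_j\}$, the latter only when $\mathbf{1}+\mathbf{e}_i+\mathbf{e}_j\in S$, i.e.\ has weight $1$, i.e.\ $d=4$; if $u+v=\mathbf{1}+\mathbf{e}_i$, the solutions are $\{\mathbf{1},\mathbf{e}_i\}$ together with a pair of basis vectors only when $\mathbf{1}+\mathbf{e}_i$ has weight $2$, again $d=4$. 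So for odd $d>3$ every distance-$2$ pair has exactly $2$ common neighbors, a triangle would require $\mathbf{e}_i+\mathbf{e}_j=\mathbf{1}$ (forcing $d=3$) so the girth is $4$, and the degree $d$ is odd; this is precisely the paper's count in your Cayley-graph language. Note that your stated expectation is actually wrong as a prediction: for $d=4$ the common-neighbor counts are $4$, still even --- it is the \emph{degree} parity, not the common-neighbor parity, that forces $d$ odd, and this mis-prediction is a symptom of the computation not having been done.

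The degenerate cases are likewise unresolved. You flag $d=2$ (``handled or interpreted separately \dots\ trivially or by convention'') and never mention $d=3$, yet both lie in the statement, and for both $\square_d$ is a complete graph ($K_2$, $K_4$) where the girth-$4$/distance-$2$ analysis is vacuous or inapplicable. The paper disposes of them by observing they are complete with neighborhood-indistinguishable vertices; by \cref{eg:K_n}, $G^{K_n}$ is the diagonal subgroup of $G^n$, which cannot contain $[D_8,D_8]^n$ for $n\ge 2$. Alternatively --- and this is a genuinely good observation of yours --- \cref{pqr-not-RA} with $p=2$ needs no girth hypothesis and covers these cases uniformly: the degrees $1$ and $3$ are odd, adjacent pairs have $0$ resp.\ $2$ common neighbors, and there are no distance-$2$ pairs, so the hypotheses hold vacuously. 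Had you carried that route through, it would have been marginally cleaner than the paper's proof; as written, however, both the main enumeration and the small cases are promissory notes, so the proposal does not yet prove part (b).
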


\begin{proof}
    Part (a) follows directly from Corollary~\ref{girth-4-not-RA}, since vertices that are distance 2 apart have exactly two common neighbors.



    For part (b), we will think of the folded cube $\square_d$ as represented by the hypercube $Q_{d-1}$ where opposite vertices are connected (those that are distance $d-1$ apart). If $d = 2$ or 3, the graph is complete ($K_2$ or $K_4$), respectively, and those graphs have neighborhood-indistinguishable vertices, so let's assume $d > 3$. This graph has girth 4 and each vertex has odd degree $d$. If two vertices are a distance 2 apart then they have 2 neighbors in common. Indeed, either they are connected by changing two bits in their binary representation (in which case they have the same neighbors as $Q_{d-1}$) or they are connected by an edge joining opposite vertices and one more edge representing a single bit flip, which can be done in either order, thus giving exactly 2 neighbors in common, and Corollary~\ref{girth-4-not-RA} finishes the proof.
\end{proof}

\section{Conclusion}

Our work has built on previous work generalizing the classic 2-color $5 \times 5$ Lights Out Puzzle by extending the possibility of the color space to elements of a group $G$. As a result, we have a new notion of the ``graph product'' of a group, which not only generalizes the diagonal subgroup of a product of groups but also allows for the invention of a slew of new puzzles. Any puzzle that involves some  permuting (like the cubies of a Rubik's Cube) could be put into the vertices of a graph and the puzzles could be connected in a way that makes it so when you perform an action on one puzzle, the puzzles in adjacent vertices have the same action done on them. The theory developed here can tell you which starting states would be solvable in such a setting, and the authors would be thrilled to hear of interesting scenarios that the reader might invent.

We have largely focused on applications where the graph in question is simple and undirected, but it is possible to consider directed graphs as well as multigraphs and graphs with multiple self-loops. There is likely more to explore with particular families of simple undirected and more general graphs. 

One particularly interesting scenario to explore is when a vertex does not affect itself; for instance, in the seminal work \cite{Sutner1988sigma, Sutner2000sigma} Sutner deduces that it is rather simple to figure out the size of $G^\G$ when $G = \Z_2$ and $\G$ is an $m \times n$ grid if clicking a vertex \textit{does not} affect itself, but if a vertex \textit{does} affect itself, it is complicated and involves finding the degree of the $\gcd$ of two polynomials.

We have shown that many graphs are RA (those with girth $\ge 5$ and many others, see the various results in Section \ref{RA-families}), and we have a characterization of RA graphs in Theorem \ref{RA-iff}, but one clear possibility for future research remains in exploring other families of graphs and finding families that either are or are not RA. There is also more to be said about the structure of $G^\G$ when $\G$ is not RA. We will explore some of these ideas in \cite{FullPaper}.

\appendix
\renewcommand{\thesection}{Appendix \Alph{section}.}
\section{Explicit Description of $G^\G$}

We have only described $G^\G$ via the short exact sequence 
\[1 \to [G, G]^n \cap G^\G \to G^\G \to (\Gab)^n\]
for now, which allows us to have a rough description of $G^\G$ and, for instance, to know its size. If we want to actually write down elements in $G^\G$, we can reduce $A_\G$ to an echelon form by using only two row operations: adding a multiple of one row to another and swapping rows. We will find that $A_\G$ is row-equivalent to a matrix $H$ in \emph{Hermite normal form}, whose entries are integers, such that
\begin{enumerate}
    \item $H$ is upper triangular,
    \item Every pivot (leading coefficient) is positive and is to the right of the pivots of the rows above it, and
    \item The elements below pivots are zero, and the elements above the pivots are nonnegative numbers that are smaller than the pivots.
\end{enumerate}
In fact, $H = UA_\G$ where $U$ is a unimodular matrix, i.e. $\det U \in \{\pm 1\}$.

In our context, we can also allow renumbering the coordinates, which rearranges the columns, so $H$ can end up in the nice form
\begin{equation}
\label{nice-HNF}
\begin{pNiceArray}{c|c|c}
I_r & * & * \\ \hline 
0 & T & * \\ \hline
0 & 0 & 0
\end{pNiceArray},
\end{equation}
where $I_r$ is an $r \times r$ identity matrix and $T$ is upper triangular of size $k \times k$ with pivots $a_1 \le \cdots \le a_k$ along the diagonal of $T$ (with $a_1 > 1$).

The first $r + k$ resulting rows, say $\bx_1, \ldots, \bx_{r+k}$ of $H$, are a nicer basis of $\Z^\G$ to work with than the original rows of $A_\G$ because now we can see that for every $g_1, \ldots, g_r \in G$,
\[ g_1^{\bx_1} \cdots g_r^{\bx_r} = (g_1, \ldots, g_r, *, \ldots, *). \]
Thus, just as in $\Z^\G$, we have complete freedom in $G^\G$ to set the first $r$ coordinates. For coordinates $r+i$ with $1 \leq i \leq k$, the fact that we can change them by any multiple of $a_i$ in $\Z^H$ tells us that in $G^\G$, we can multiply them (sequentially) by any element $g^{a_i}$. Note that, depending on $G$, the effect of this can be drastically different. 
For example, if $a_i = 3$ and $G$ has exponent 3, then we cannot change coordinate $r+i$ by clicking coordinate $r+i$ itself, and the only way to change it may be to click coordinates $r+j$ with $1 \le j < i$. 
On the other hand, if the cubes of $G$ generate all of $G$, then we can change coordinate $r+i$ any way we wish by clicking that coordinate itself. 

In particular, for a divisible group $G$, or a group that is $p$-divisible for every prime $p$ dividing the nonzero elementary divisors of $A_\G$, we can set the first $r+k$ coordinates however we wish.
If $G$ is abelian, then just as with $\Z^H$, it is not possible to change the remaining $n-r-k$ coordinates (corresponding to the zeroes on the main diagonal of $H$) without affecting the previous coordinates. For nonabelian groups, this is not necessarily the case: if $\G$ is $G$-RA, then we can nudge these coordinates by any commutator in $G$. 

\textbf{Example.}
Let $G$ be abelian, and let $\G = C_4$. Then we can reduce $A_\G$ to
\[
H = \begin{pmatrix}
1 & 0 & 0 & 2 \\
0 & 1 & 0 & 2 \\
0 & 0 & 1 & 2 \\
0 & 0 & 0 & 3
\end{pmatrix}.
\]
Then, writing $\bx_1, \ldots, \bx_4$ for the rows of $H$, a typical element of $G^\G$ is
\[ g_1^{\bx_1} \cdots g_4^{\bx_4} = (g_1, g_2, g_3, g_1^{2} g_2^{2} g_3^{2} g_4^3). \]
In fact, every element of $G^\G$ has this form. 

Let us point out why we need $G$ abelian in the previous example. If we want an element of the form $(g_1, g_2, g_3, *)$, then we can proceed as in the example, or we could switch the order of some of the elements:
\[ g_2^{\bx_2} g_1^{\bx_1} g_3^{\bx_3} g_4^{\bx_4} = (g_1, g_2, g_3, g_2^{2} g_1^{2} g_3^{2} g_4^3) .\]
Note that the last coordinate may be different from before if $G$ is nonabelian, but that it does differ by a commutator in this case. 

\section{Code for graph products}
\label{sec:app-code}

The first block of code below, in Figure \ref{fig:G-Gamma-code}, can be used to compute $G^\G$ in GAP.

\begin{figure}[hbtp]
\begin{small}
\begin{verbatim}
# Gamma is a list of lists, representing a graph 
# via adjacency lists. The vertices are assumed 
# to be {1, ..., Size(Gamma)}
GraphProduct := function(G, Gamma)

local gens, D, embeds, newgens, g, i, S, k, nbhd,
newgen, imgs;
	
k := Size(Gamma);
gens := GeneratorsOfGroup(G);
D := DirectProduct(List([1..k], i -> G));
embeds := List([1..k], i -> Embedding(D, i));
newgens := [];

for g in gens do
for i in [1..k] do
nbhd := Concatenation(Gamma[i], [i]);
imgs := List(nbhd, i -> Image(embeds[i], g));
newgen := Product(imgs);
Add(newgens, newgen);
od;
od;

S := Subgroup(D, newgens);
return S;
end;
\end{verbatim}
\end{small}
    \caption{GAP code to compute $G^\G$}
    \label{fig:G-Gamma-code}
\end{figure}

Figure \ref{fig:Q3-RA-code} shows code that compute the index of $\CComm(G,\G)$ in $[G,G]^{|\G|}$ in GAP.

\begin{figure}[hbtp]
\begin{small}
\begin{verbatim}
# Returns the index of Comm(G, Gam) in [G, G]^n
RAIndex := function(G, Gam)
	local n, dg;
	
	n := Size(Gam);
	dg := DerivedSubgroup(G);
	
	return Size(dg)^n * Size(GraphProduct(G/dg, Gam)) /
    Size(GraphProduct(G, Gam));
	end;

Q3 := [[2,4,5], [1,3,6], [2,4,7], [1,3,8],
[1,6,8], [2,5,7], [3,6,8], [4,5,7]];

# Show that Q_3 is D10-RA but not D8-RA
RAIndex(DihedralGroup(10), Q3);
RAIndex(DihedralGroup(8), Q3);
\end{verbatim}
\end{small}
    \caption{GAP code to show that $Q_3$ is $D_{10}$-RA but not $D_8$-RA.}
    \label{fig:Q3-RA-code}
\end{figure}

The last block of code, in Figure \ref{fig:sage-small-RA-code}, uses SageMath to show that all connected, neighborhood-distinguishable graphs on up to 7 vertices are RA.

\begin{figure}[hbtp]
\begin{small}
\begin{verbatim}
# Do all vertices have unique closed neighborhoods?
def allNeighborhoodsDistinct(g):
    n = g.order()
    M = g.adjacency_matrix() + matrix.identity(n)
    return len(Set(M.rows())) == n

# Find the Smith normal form of A+I
def snf(g):
    n = g.order()
    M = g.adjacency_matrix() + matrix.identity(n)
    Ms = M.smith_form()[0]
    return Ms

# Build a matrix of all intersections of closed neighborhoods
def RAMat(g):
    n = g.order()
    M = g.adjacency_matrix() + matrix.identity(n)
    cols = M.columns()
    newcols = []
    for c1 in cols:
        for c2 in cols:
            newcol = [c1[i]*c2[i] for i in range(len(c1))]
            newcols.append(newcol)
    M2 = matrix(newcols).transpose()
    return M2

def isRA(g):
    n = g.order()
    M2 = RAMat(g)
    M2s = M2.smith_form()[0]
    return all([M2s[i][i] == 1 for i in range(n)])

graphLists = []

# This contains all graphs with 7 or fewer vertices
D = GraphDatabase()
for i in range(3,8):
    S = GraphQuery(D, display_cols=['graph6'], num_vertices=i)
    L = S.get_graphs_list()
    graphLists.append([g for g in L if g.is_connected() and
    allNeighborhoodsDistinct(g)])

print("Number of neighborhood-distinct graphs that are RA:")
for i in range(3,8):
 print(len([g for g in graphLists[i-3] if isRA(g)]), 
          "out of", len([g for g in graphLists[i-3]]), 
          "with", i, "vertices")
    \end{verbatim}
    \end{small}
    \caption{SageMath code to check all graphs with up to 7 vertices for the RA property.}
    \label{fig:sage-small-RA-code}
\end{figure}

\bibliographystyle{plain}
\bibliography{refs}

\begin{thebibliography}{10}

\bibitem{FullPaper}
Gabe Cunningham and Igor Minevich.
\newblock {Graph Powers of Groups II: The RA Matrix}, 2025.
\newblock Available at \url{https://arxiv.org/abs/2510.10314}.

\bibitem{DummitandFoote}
David~S. Dummit and Richard~M. Foote.
\newblock {\em Abstract algebra}.
\newblock John Wiley \& Sons, Inc., Hoboken, NJ, third edition, 2004.

\bibitem{electrician}
Tom Edgar and Jessica~K. Sklar.
\newblock A confused electrician uses {S}mith normal form.
\newblock {\em Math. Mag.}, 89(1):3--13, 2016.

\bibitem{LOFiniteGraphs}
Stephanie Edwards, Victoria Elandt, Nicholas James, Kathryn Johnson, Zachary Mitchell, and Darin Stephenson.
\newblock Lights out on finite graphs.
\newblock {\em Involve}, 3(1):17--32, 2010.

\bibitem{Eriksson2001note}
Henrik Eriksson, Kimmo Eriksson, and Jonas Sj\"ostrand.
\newblock Note on the lamp lighting problem.
\newblock {\em Adv. in Appl. Math.}, 27(2-3):357--366, 2001.
\newblock Special issue in honor of Dominique Foata's 65th birthday (Philadelphia, PA, 2000).

\bibitem{Fleischer2013survey}
Rudolf Fleischer and Jiajin Yu.
\newblock A survey of the game ``{L}ights {O}ut!''.
\newblock In {\em Space-efficient data structures, streams, and algorithms}, volume 8066 of {\em Lecture Notes in Comput. Sci.}, pages 176--198. Springer, Heidelberg, 2013.

\bibitem{Giffen2009generalizing}
Alexander Giffen and Darren~B. Parker.
\newblock On generalizing the ``lights out'' game and a generalization of parity domination.
\newblock {\em Ars Combin.}, 111:273--288, 2013.

\bibitem{mating-graphs}
Goran Kilibarda.
\newblock Enumeration of unlabelled mating graphs.
\newblock {\em Graphs Combin.}, 23(2):183--199, 2007.

\bibitem{Kreh2017lights}
Martin Kreh.
\newblock ``{L}ights out'' and variants.
\newblock {\em Amer. Math. Monthly}, 124(10):937--950, 2017.

\bibitem{MUETZE20072755}
Torsten Muetze.
\newblock Generalized switch-setting problems.
\newblock {\em Discrete Math.}, 307(22):2755--2770, 2007.

\bibitem{oeis}
{OEIS Foundation Inc.}
\newblock The {O}n-{L}ine {E}ncyclopedia of {I}nteger {S}equences, 2024.
\newblock Published electronically at \url{http://oeis.org}.

\bibitem{SNF}
Henry J.~Stephen Smith.
\newblock On systems of linear indeterminate equations and congruences.
\newblock {\em Philosophical Transactions of the Royal Society of London}, 151:293--326, 1861.

\bibitem{Sutner1988sigma}
Klaus Sutner.
\newblock On {$\sigma$}-automata.
\newblock {\em Complex Systems}, 2(1):1--28, 1988.

\bibitem{Sutner2000sigma}
Klaus Sutner.
\newblock {$\sigma$}-automata and {C}hebyshev-polynomials.
\newblock {\em Theoret. Comput. Sci.}, 230(1-2):49--73, 2000.

\bibitem{Torrence}
Bruce Torrence and Robert Torrence.
\newblock An iterative strategy for {L}ights {O}ut on {P}etersen graphs.
\newblock {\em Involve}, 9(5):721--732, 2016.

\bibitem{parallel-product}
Stephen~E. Wilson.
\newblock Parallel products in groups and maps.
\newblock {\em J. Algebra}, 167(3):539--546, 1994.

\end{thebibliography}
\end{document}